\documentclass[10pt]{amsart}

\RequirePackage{newlfont}

\usepackage{amscd}
\usepackage{amsthm} % numbering of Lemmas and Theorems
\usepackage{amsmath}
\usepackage{amsfonts}
\usepackage{amssymb}
\usepackage{enumerate}
\usepackage{graphicx}
\usepackage{latexsym}
\usepackage{color}
\usepackage{bbm}

\usepackage{relsize} % To use \mathlarger for larger sizes

\newcommand{\bigint}{\mathlarger{\int}}

\newtheorem{theorem}{Theorem}[section]

\newtheorem{proposition}{Proposition}[section]
\newtheorem{lemma}{Lemma}[section]
\newtheorem{corollary}{Corollary}[section]

\newtheorem{definition}{Definition}[section]

\newtheorem{remark}{Remark}[section]
\newtheorem{statement}{Statement}[section]

\numberwithin{equation}{section}

\begin{document}

\title[Modular topologies on Vector Spaces]{Modular Topologies on Vector Spaces: Structure and Normability}

\author[M. A. Khamsi, J. Lang, O. M\'{e}ndez ]{Mohamed  A. Khamsi, Jan Lang,  Osvaldo M\'{e}ndez}

\address{Mohamed A. Khamsi\\ Department of Applied Mathematics and Sciences, Khalifa University, Abu Dhabi, UAE}
\email{mohamed.khamsi@ku.ac.ae}
\address{Jan Lang\\ Department of Mathematics, 100 Math Tower, 231 West 18th Ave., Columbus,
OH 43210-1174, USA; Faculty of Electrical Engineering, Department of Mathematics, Czech Technical University, Technicka 2, 166 27 Prague 6, Czech Republic}
\email{lang.162@osu.edu}

\address{Osvaldo M\'{e}ndez\\Department of Mathematical Sciences, The University of Texas at El Paso, El Paso, TX 79968, USA}
\email{osmendez@utep.edu}

\subjclass[2020]{Primary 46A16, Secondary 46B20, 46E30, 46A20}
\keywords{\(\Delta_2\)-condition, Luxemburg norm, modular topology, modular vector space, modular convergence, variable exponent spaces.}

\begin{abstract}
We investigate the topology generated by modular convergence on vector spaces endowed with a convex modular. Although modular convergence has long played a central role in the theory of modular function spaces, the topology it induces has, to the authors' best knowledge, not previously been studied as an independent mathematical object.

Our principal result establishes that the modular topology is compatible with the vector space structure if and only if the underlying modular satisfies the $\Delta_2$-condition. Consequently, the $\Delta_2$-condition admits a purely topological characterization. We also show that the Luxemburg norm topology is the weakest first-countable topology containing all scaled modular topologies. We apply the theory to variable exponent sequence and Lebesgue spaces beyond the classical $\Delta_2$-theory and present an application of our results to the minimization of the Dirichlet integral in $W^{1,p(x)}$ with unbounded exponent $p$, which allows us to solve an open boundary value problem.
\end{abstract}

\maketitle

\section{Introduction}

Modular spaces provide a flexible extension of classical Banach space theory.
They occur in Orlicz-type spaces, variable-exponent Lebesgue and Sobolev
spaces, and variational problems with non-standard
growth.  Two topological constructions must, however, be distinguished.  In
the classical construction one prescribes suitable modular balls as a
neighborhood system; see Musielak~\cite[Chapter~I, \S6]{M:1983},
Hajji~\cite{Hajji2013}, and the related topological-vector-space framework of
Kozlowski~\cite{Kozlowski2020,kozlowski_book}.  In the present paper we instead begin with
the sequential convergence
\[
x_j\xrightarrow{\rho}x
\quad\Longleftrightarrow\quad
\rho(x_j-x)\longrightarrow0
\]
and take its associated sequential topology in the sense of
Dudley~\cite{dudley}.  We call this topology the \emph{sequential modular
topology}.

Under the $\Delta_2$-condition the distinction disappears: modular
convergence and Luxemburg norm convergence agree, and the sequential modular
topology is the usual norm topology.  Without $\Delta_2$, modular balls need
not be open in the sequential topology.  Thus the results below complement,
rather than replace, the classical modular-ball topology.  Recent work on
modular topologies of Orlicz sequence spaces also uses balls as a local base;
see~\cite{Haryadi2025}.  Our emphasis is the topological modification forced
by the convergence relation itself when no ball-base axiom is imposed.

The principal abstract result, Theorem~\ref{Mainequivalence}, proves that the
sequential modular topology is compatible with the vector-space operations if
and only if the sequential $\Delta_2$-condition holds.  Equivalently, this is
precisely the case in which modular convergence agrees with Luxemburg norm
convergence.  We also study the scaled modulars
$\rho_\lambda(x)=\rho(\lambda x)$ and prove that the Luxemburg norm topology
is the weakest first-countable topology stronger than all associated
sequential modular topologies.

For variable-exponent sequence and Lebesgue spaces the failure of $\Delta_2$
produces a topology which is strictly weaker than the norm topology.  Under
the hypotheses stated in Section~\ref{lpn}, we prove that it remains Hausdorff
and separable, while modular balls have empty interior and the topology is not
first-countable.  We give explicit constructions in both the discrete and
continuous settings, and we exhibit countable dense subsets rather than
relying only on density of an uncountable smooth core.

The final section gives a variational application.  The argument is formulated
for the weighted gradient modular
$R_p(z)=\int_\Omega |z|^{p(x)}/p(x)\,dx$.  We include the completeness and
lower-exponent convergence statements needed for the direct method and state
the conclusion as uniqueness of the variational minimizer, together with its
Euler identity for the prescribed test space.

The paper is organized as follows.  Section~2 recalls modular spaces and the
Luxemburg norm.  Section~3 constructs the sequential modular topology.
Section~4 treats scaled topologies and their first-countable envelope.
Section~5 proves the $\Delta_2$ characterization.  Section~6 treats
variable-exponent spaces.  Section~7 develops the variational application to
the $p(\cdot)$-Dirichlet integral.

\thispagestyle{empty}

\section{Modular vector spaces}\label{modularvectorspaces}
Modular vector spaces provide a rich structure that allows for the exploration of aspects of functional analysis that are out of the reach of the theory of topological vector spaces.\\
In this section, the definition and basic properties of modular vector spaces are recalled. The reader is referred to \cite{KK} and \cite{M:1983} and the references therein for further details.

\begin{definition}\label{modular-vs}
A modular on a real vector space $X$ is a functional $\rho:X\rightarrow [0,\infty]$ such that
\begin{enumerate}\label{modularproperties}
\item[(1)] $\rho(u) = 0$ if and only if $u = 0$;
\item[(2)] $\rho(\alpha u) = \rho(u)$, if $|\alpha| = 1$;
\item[(3)] $\rho(\alpha u + (1-\alpha) v )\leq \alpha \rho(u) + (1-\alpha)\rho(v)$, for any $\alpha \in [0,1]$, and any $u, v \in X$.
\end{enumerate}
\end{definition}

In what follows, we assume that $\rho$ is left-continuous, meaning that for any $u \in X$, $\lim\limits_{\lambda \rightarrow \lambda_0^-}\ \rho(\lambda u) = \rho(\lambda_0 u)$, for each $\lambda_0>0.$\\

Let $X$ be a real vector space, and let $\rho:X \to [0,\infty]$ be a convex modular. Then,
$$X_{\rho}=\Big\{v\in X: \rho(\lambda v)<\infty \, \text{for some} \, \lambda>0\Big\}$$
is a vector subspace of $X$. It is straightforward to show that
$$X_{\rho}=\left\{v\in X: \lim\limits_{\lambda \to 0} \rho(\lambda v) = 0\right\}.$$ 

\noindent A fundamental role in the theory of modular vector spaces is played by the
$\Delta_2$-condition. This condition governs many of the basic structural
and topological properties of modular spaces and, as we shall see, its
failure leads to phenomena that are central to the present work.

\begin{definition}\label{Delta2}
A modular $\rho$ on a vector space $X$ is said to satisfy the $\Delta_2$-condition if for  every
sequence $(x_j)\subset X$ 
\[x_j \xrightarrow{\rho} 0\Rightarrow
2x_j \xrightarrow{\rho} 0.
\]
\end{definition}

\medskip

\noindent Two notable examples of modular vector spaces were introduced in the pioneering works of Orlicz \cite{orlicz1931} and Nakano \cite{nakano, nakano3}. Specifically:

\begin{definition}\label{deflp}
Fix a sequence $\mathbf{p}: 
= (p_n) \subset [1,\infty)$. On the vector space $X = \mathbb{R}^{\mathbb{N}}$ of all real sequences, define the functional $\rho_{\mathbf{p}}: X \to [0, \infty]$ by
$$\rho_{\mathbf{p}}((a_j)) := \textstyle \sum\limits_{j = 1}^{\infty} |a_j|^{p_j}.$$
Then $\rho_{\mathbf{p}}$ is a convex and left-continuous modular. Moreover, the associated modular vector space $X_{\rho_{\mathbf{p}}}$, denoted $\ell^{(p_n)}$, is 
$$\ell^{(p_n)} := \Big\{(a_j) \in  \mathbb{R}^{\mathbb{N}} : \textstyle \sum\limits_{j = 1}^{\infty} |\lambda a_j|^{p_j} < \infty \, \text{for some} \, \lambda > 0 \Big\}.$$

\end{definition}
Similarly,
\begin{definition}\label{Lp}
\noindent Let $\Omega\subseteq {\mathbb R}^n$ be an open set and $p:\Omega\rightarrow [1,\infty)$ be a Borel-measurable function. On the set of extended-real valued Borel-measurable functions on $\Omega$, ${\mathcal M}(\Omega)$, the functional
$$\rho_{p(\cdot)}(u):=\int\limits_{\Omega}|u(x)|^{p(x)}dx,$$ or shortly $\rho_p$,
is a left-continuous convex modular; the associated modular space is
$$L^{p(\cdot)}(\Omega):=\Bigg\{u\in {\mathcal M}(\Omega): \int\limits_{\Omega}|\lambda u(x)|^{p(x)}dx<\infty \,\,\text{for some}\; \lambda>0 \Bigg\}.$$
\end{definition}
For a measurable exponent we use the standard notation
\[
p^-:=\operatorname*{ess\,inf}_{x\in\Omega}p(x),
\qquad
p^+:=\operatorname*{ess\,sup}_{x\in\Omega}p(x).
\]
For an exponent sequence, $p^-:=\inf\limits_n p_n$ and
$p^+:=\sup\limits_n p_n$.
\noindent In this work the modular vector spaces \(\ell^{(p_n)}\) and \(L^{p(\cdot)}(\Omega)\) will be referred to as variable exponent spaces.

\begin{remark}{\normalfont  
Consider a convex modular $\rho$ on a vector space $X$. Let 
$$D:=\{x\in X_\rho:\ \rho(x)\leq 1\}.$$ 
Then it holds $X_{\rho}=\langle D\rangle$, meaning $X_\rho$ is the linear span of $D$. Indeed, if $x \in X_{\rho}$, either $\rho(\lambda x) \leq 1$ or $1 < \rho(\lambda x) < \infty$. In the latter case, and by the convexity
of $\rho$ and the fact that $\rho(0)=0$, it holds that
$$\rho\left(\frac{\lambda x}{\rho(\lambda x)}\right) \leq 1.$$
In both cases, $x$ belongs to $\langle D\rangle$. Conversely, if $x\in \langle D\rangle$, let $x = \textstyle \sum\limits_{j=1}^K\alpha_jx_j$ with $x_j \in D$ for $j = 1, \dots, K$. Since $D$ is balanced, considering $\alpha_j > 0$ for all $j$ suffices. Then it follows from the convexity of $\rho$ that
$$\rho\Big(\big(\textstyle \sum\limits_{j=1}^K \alpha_j\big)^{-1} x\Big) \leq 1,$$
proving the claim.}
\end{remark}

The concept of modular balls is of key importance in what follows.

\begin{definition}
For $a \in X$ and $\varepsilon > 0$, let  
$$B_{\rho,\varepsilon}(a) := \big\{y \in X : \rho(y - a) < \varepsilon\big\}.$$ 
In the sequel $B_{\rho,\varepsilon}(a)$ will be referred to as the modular ball of radius $\varepsilon$, centered at $a$.
\end{definition}

\begin{proposition}\label{X_rho-stability}
If $a \in X_{\rho}$, the convexity of $\rho$ implies that $B_{\rho,r}(a) \subseteq X_{\rho}$, for any $r > 0$.
\end{proposition}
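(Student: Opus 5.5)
To prove Proposition~\ref{X_rho-stability}, I will take an arbitrary $y\in B_{\rho,r}(a)$ and exhibit a $\lambda>0$ with $\rho(\lambda y)<\infty$. By definition of $X_\rho$, that is all that is needed. The natural decomposition is
\[
y=(y-a)+a,
\]
where $\rho(y-a)<r<\infty$ by hypothesis. Since $a\in X_\rho$, there is some $\mu>0$ with $\rho(\mu a)<\infty$.

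The key step is to write a small multiple of $y$ as a convex combination of $y-a$ and a scaled copy of $a$, so that property (3) of Definition~\ref{modular-vs} applies. Set $\lambda:=\mu/(1+\mu)\in(0,1)$, so that $1-\lambda=1/(1+\mu)$ and $\lambda=(1-\lambda)\mu$. Then
\[
\lambda y=\lambda(y-a)+(1-\lambda)\,\mu a .
\]
Convexity gives
\[
\rho(\lambda y)\le \lambda\,\rho(y-a)+(1-\lambda)\,\rho(\mu a)<\lambda r+(1-\lambda)\rho(\mu a)<\infty .
\]
Hence $y\in X_\rho$.

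There is essentially no obstacle here. The only point requiring care is choosing the coefficients so that the scaling of $a$ matches a finite-modular multiple, $\mu a$, while the coefficient of $y-a$ stays in $[0,1]$. The choice $\lambda=\mu/(1+\mu)$ achieves both at once. If $\mu\ge1$ one could alternatively use $\lambda=1/2$ together with the monotonicity $\rho(ta)\le\rho(sa)$ for $0\le t\le s$. That monotonicity itself follows from convexity and $\rho(0)=0$, but the direct choice above avoids needing it. Since $r>0$ was arbitrary, this proves $B_{\rho,r}(a)\subseteq X_\rho$ for every $r>0$.
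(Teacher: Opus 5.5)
Your proof is correct and follows essentially the same route as the paper: write a small multiple of $y$ as a convex combination of a multiple of $y-a$ and a multiple of $a$, then apply property (3). The only difference is bookkeeping. The paper first reduces to $\rho(\lambda a)<\infty$ with $0<\lambda<1$, then uses midpoint weights together with $\rho(\lambda(y-a))\le\lambda\rho(y-a)$; your choice $\lambda=\mu/(1+\mu)$ makes both of those auxiliary steps unnecessary.
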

\begin{proof}
By assumption, there exists $\lambda > 0$ such that $\rho(\lambda a) < \infty$. Without loss of generality, assume $0 < \lambda < 1$. Then, for any $y \in B_{\rho,r}(a)$,
\begin{align*}\rho\left(\frac{\lambda}{2} y\right) = \rho\left(\frac{\lambda}{2} (y - a) + \frac{\lambda}{2} a\right) &\leq \frac{1}{2} \Big( \rho(\lambda (y - a)) + \rho(\lambda a) \Big) \\
&< \frac{1}{2} \big(\lambda\ r + \rho(\lambda a)\big) < \infty,
\end{align*}
hence $y\in X_{\rho}$.
\end{proof}

\subsection{The Luxemburg norm}\leavevmode

Since each modular ball  $B_r(0):=\{x\in X_\rho:\rho(x)<r\}$ is a convex, balanced, and absorbent set in $X_\rho$, the Minkowski functional
\[
\mu_{B_r}(z):=\inf\left\{\lambda>0:\rho(\lambda^{-1}z)\le r\right\}
\]
defines a norm on $X_\rho$.  It can be easily shown that on account of the convexity of $\rho$, for any two modular balls $B_{r_1}(0)$ and $B_{r_2}(0)$ with $r_1 \leq r_2$, their corresponding Minkowski functionals satisfy
$$\mu_{B_{r_2}}(x) \leq \mu_{B_{r_1}}(x) \leq \frac{r_2}{r_1} \mu_{B_{r_2}}(x),$$
for all $x \in X_{\rho}$. Due to this equivalence of norms, it is customary to focus on the Minkowski functional of the unit modular ball, denoted as $\mu_{B_1}$. We also adopt the notation
$$\mu_{B_1}(x) = \|x\|_{\rho} := \inf \Big\{\lambda > 0 : \rho(\lambda^{-1}x) \leq 1 \Big\}.$$
The norm $\|\cdot\|_{\rho}$ is referred to as the Luxemburg norm on the modular space $X_{\rho}$ \cite{Luxemburg}.
Proposition \ref{standard} is well known.
\begin{proposition}\label{standard}\cite{KR}
Consider the modular vector space $X_{\rho}$.
\begin{enumerate}
\item [$(i)$] Since $\rho$ is left-continuous, for any $x \neq 0$, then $\rho\left(\|x\|^{-1}_\rho x \right) \leq 1$.
\item[$(ii)$] $\rho(x) \leq 1 \iff \|x\|_{\rho} \leq 1$.
\item [$(iii)$] If $\|x\|_{\rho} \leq 1$, then $\rho(x) \leq \|x\|_{\rho}$.
\item[$(iv)$] If, in addition, $\rho$ is right-continuous, then $\rho(x) < 1 \iff \|x\|_{\rho} < 1$.
\end{enumerate}
\end{proposition}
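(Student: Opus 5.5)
The four items all follow from convexity of $\rho$, $\rho(0)=0$, and the definition of $\|\cdot\|_\rho$ as an infimum, plus the stated one-sided continuity. The basic tool is monotonicity along rays. For $0\le s\le t$, convexity with $\rho(0)=0$ gives
\[
\rho(sx)=\rho\bigl(\tfrac{s}{t}\,tx+(1-\tfrac{s}{t})\,0\bigr)\le \tfrac{s}{t}\,\rho(tx)\le\rho(tx).
\]
So $\lambda\mapsto\rho(\lambda^{-1}x)$ is nonincreasing in $\lambda$. The set $\{\lambda>0:\rho(\lambda^{-1}x)\le1\}$ is therefore an up-ray, and it is nonempty for $x\in X_\rho$.

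For $(i)$, take $x\neq0$ and set $\lambda_0=\|x\|_\rho$. By (1), $\lambda_0>0$: if the infimum were $0$, then $\rho(tx)\le1$ for all $t$. Convexity would give $\rho(x)\le 1/t\to0$, forcing $x=0$. For every $\lambda>\lambda_0$ we have $\rho(\lambda^{-1}x)\le1$. As $\lambda\downarrow\lambda_0$ the scalar $\lambda^{-1}\uparrow\lambda_0^{-1}$ from below, so left-continuity gives $\rho(\lambda_0^{-1}x)=\lim\rho(\lambda^{-1}x)\le1$.

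For $(ii)$, if $\rho(x)\le1$ then $\lambda=1$ is admissible, so $\|x\|_\rho\le1$. Conversely, suppose $\|x\|_\rho\le1$ with $x\ne0$. Then $\rho(x)\le\rho(\|x\|_\rho^{-1}x)\le1$, by the monotonicity along rays (since $1\le\|x\|_\rho^{-1}$) and by $(i)$.

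For $(iii)$, let $0<\|x\|_\rho\le1$. Apply the convexity inequality with $s=1$ and $t=\|x\|_\rho^{-1}\ge1$:
\[
\rho(x)\le\|x\|_\rho\,\rho(\|x\|_\rho^{-1}x)\le\|x\|_\rho,
\]
using $(i)$ in the last step. The case $x=0$ is trivial.

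For $(iv)$, first suppose $\|x\|_\rho<1$ and pick $\lambda\in(\|x\|_\rho,1)$. Then
\[
\rho(x)\le\lambda\,\rho(\lambda^{-1}x)\le\lambda<1,
\]
and this direction needs no extra continuity. For the converse, suppose $\rho(x)<1$ and $x\neq0$. Right-continuity of $t\mapsto\rho(tx)$ at $t=1$ gives some $t>1$ with $\rho(tx)\le1$. Hence $t^{-1}$ is admissible, and $\|x\|_\rho\le t^{-1}<1$.

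The only delicate step is making sure each continuity hypothesis is applied in the correct direction. The infimum in $(i)$ is approached from scalars $\lambda^{-1}$ increasing toward $\lambda_0^{-1}$, which is why left-continuity suffices there. The converse of $(iv)$ requires moving past $t=1$ upward, which is why right-continuity is needed. The remaining steps are immediate.
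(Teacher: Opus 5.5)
Your proposal is correct and takes the same route as the paper, which only says the result follows directly from the definition of $\|\cdot\|_\rho$ and the properties of $\rho$. Your argument supplies those details: monotonicity along rays from convexity and $\rho(0)=0$, left-continuity to pass to the infimum in $(i)$, and right-continuity to move past $t=1$ in the converse of $(iv)$, with the omitted $x=0$ cases trivial since $\rho(0)=0$.
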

\begin{proof}
The proof follows directly from the definitions and properties of the modular $\rho$.
\end{proof}

\begin{remark}{\normalfont  
In general, the condition $\rho(x_0) < 1$ does not imply $\|x_0\| < 1$. For instance, take the domain $\Omega = \left(0, \frac{1}{2}\right)$ and the function $p(x) = x^{-1} $. Consider the space $L^{p(\cdot)}((0,\frac{1}{2}))$ as in Definition \ref{Lp}, that is, the modular is given by  $\rho(u) = \bigint_0^{1/2} |u(x)|^{p(x)} dx$ .
Let $\theta > 1$. Observe that $\lim_{x\rightarrow 0^+} \big(\theta^{x^{-1}} - x^{-1}\big) = \infty$.  Thus, there exists a point $x_0$ in the interval $0 < x_0 < \frac{1}{2}$ such that for $0 < x < x_0$, we have $\theta^{x^{-1}} - x^{-1} > \frac{1}{2}.$
It follows that
$$\int_0^{\frac{1}{2}} \theta^{x^{-1}} dx \geq \int_0^{x_0} \left( \frac{1}{2} + \frac{1}{x} \right) dx = \infty.$$
Consequently, we have $\rho(1) = \frac{1}{2} < 1$. However, for any $\lambda$ such that $0 < \lambda < 1$, $\rho\left( \frac{1}{\lambda} \right) = \infty$, which implies that $\|1\|_{\rho} = 1$. This example illustrates that modular balls might not be norm-open.\\
Moreover, it shows that the inequality in
Proposition~\ref{standard}(i) can be strict when the modular is not
right-continuous.}
\end{remark}

\section{Modular topologies}\label{modulartopologies}
It is well known that under appropriate conditions, an abstract notion of convergence on a non-empty set $X$ generates a topology on $X$ that is naturally compatible with the given convergence. This phenomenon has been thoroughly investigated in  \cite{dudley}. In this section we present the basic properties of the topology generated by modular convergence in a vector space, which will be referred to as the \emph{modular topology}. The results we derive are specific to modular convergence and are not straightforward consequences of \cite{dudley}. In particular, we study the relation of the modular topology with the algebraic structure of the space (Proposition \ref{basic-properties} and Proposition \ref{subspace}). These properties have profound implications in the study of boundary value problems, as shall be outlined in Section \ref{applications}. A deeper look at the modular topology will follow in the subsequent sections.
The following definition is standard.
\begin{definition}\label{Defmodulcon}
A sequence $(x_n)\subset X$ is said to $\rho$-converge to
$x\in X$, written $x_n\xrightarrow{\rho}x$, if
\[
\rho(x_n-x)\longrightarrow0 \,\,\text{as}\,\, n\rightarrow \infty.
\] 
\end{definition}
The following lemma is straightforward, yet fundamental in the sequel.
\begin{lemma}\label{lconvergence}
Let $\rho$ be a modular on a vector space $X$. Then
\begin{enumerate}
\item[$(i)$] Every constant sequence $x_n=x$ $\rho$-converges to $x$.
\item[$(ii)$] A sequence has at most one $\rho$-limit.
\item[$(iii)$] Every subsequence of a $\rho$-convergent sequence is convergent to the same $\rho$-limit.

\item[$(iv)$] If $(x_n)$ does not $\rho$-converge to $x$, then it has a subsequence no further subsequence of which $\rho$-converges to $x$.
\end{enumerate}
\end{lemma}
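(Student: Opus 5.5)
Each of the four items follows directly from the axioms in Definition~\ref{modular-vs}; the plan is to verify them in order.

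For $(i)$, observe that $\rho(x_n-x)=\rho(0)=0$ for every $n$. Axiom (1) gives $\rho(0)=0$, so the constant sequence $\rho$-converges to $x$.

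For $(ii)$, the only tool available in place of a triangle inequality is convexity (axiom (3)) with $\alpha=\tfrac12$, together with symmetry (axiom (2)). Suppose $x_n\xrightarrow{\rho}x$ and $x_n\xrightarrow{\rho}y$. Write
\[
\tfrac12(x-y)=\tfrac12(x-x_n)+\tfrac12(x_n-y),
\]
so that
\[
\rho\bigl(\tfrac12(x-y)\bigr)\le \tfrac12\rho(x-x_n)+\tfrac12\rho(x_n-y)=\tfrac12\rho(x_n-x)+\tfrac12\rho(x_n-y),
\]
where the equality uses $\rho(-u)=\rho(u)$. The right-hand side tends to $0$ and the left-hand side does not depend on $n$, so $\rho\bigl(\tfrac12(x-y)\bigr)=0$. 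Axiom (1) then forces $\tfrac12(x-y)=0$, that is, $x=y$. This is the only step with any content, namely the point that a $\tfrac12$-scaled triangle inequality suffices and no $\Delta_2$-type hypothesis is needed.

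For $(iii)$, the sequence $\rho(x_{n_k}-x)$ is a subsequence of the real sequence $\rho(x_n-x)$, which tends to $0$, so it tends to $0$ as well.

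For $(iv)$, the statement reduces to real sequences. If $\rho(x_n-x)\not\to0$, there are $\varepsilon>0$ and indices $n_1<n_2<\cdots$ with $\rho(x_{n_k}-x)\ge\varepsilon$ for all $k$. Every further subsequence keeps the bound $\ge\varepsilon$, so none of them $\rho$-converges to $x$.

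These four properties are exactly the convergence axioms under which Dudley's construction~\cite{dudley} yields a sequential topology, and this is their role in the sequel.
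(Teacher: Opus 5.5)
Your proof is correct and matches the paper's essentially verbatim. In both, the only substantive item is $(ii)$, handled by the midpoint-convexity estimate $\rho\bigl(\tfrac12(x-y)\bigr)\le\tfrac12\rho(x-x_n)+\tfrac12\rho(x_n-y)\to0$, and $(iv)$ is obtained by extracting a subsequence with $\rho(x_{n_k}-x)\ge\varepsilon$ for all $k$.
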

\begin{proof}
Only $(ii)$ requires comment.  If $x_n\xrightarrow{\rho}x$ and
$x_n\xrightarrow{\rho}y$, then
\[
\rho\left(\frac{x-y}{2}\right)
\le \frac12\rho(x-x_n)+\frac12\rho(x_n-y)\longrightarrow0.
\]
Thus $\rho((x-y)/2)=0$, and hence $x=y$.  For (iv), if convergence to
$x$ fails, choose $\varepsilon>0$ and a subsequence $(x_{n_k})$ with
$\rho(x_{n_k}-x)\ge\varepsilon$ for every $k$.

\end{proof}
\begin{corollary}\label{l*convergence}

Lemma~\ref{lconvergence} shows that modular convergence is an
$L^{\ast}$-convergence in the sense of \cite{dudley}.  The associated
sequential topology is described explicitly below.

\end{corollary}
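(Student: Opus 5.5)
The content of Corollary~\ref{l*convergence} is that modular convergence satisfies the axioms of an $L^{\ast}$-convergence in Dudley's sense \cite{dudley}. So the plan is to list those axioms and match each one with the corresponding item of Lemma~\ref{lconvergence}. The axioms for a convergence relation $x_n\to x$ on a set $X$ are:
\begin{enumerate}
\item[(a)] constant sequences converge to their value;
\item[(b)] limits are unique;
\item[(c)] every subsequence of a convergent sequence converges to the same limit;
\item[(d)] the Urysohn (star) property: if $(x_n)$ does not converge to $x$, then some subsequence has no further subsequence converging to $x$.
\end{enumerate}
Items (a), (b), (c) are exactly Lemma~\ref{lconvergence}(i), (ii), (iii).

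Item (d) is Lemma~\ref{lconvergence}(iv). I would check its proof once more in the form needed here. If $\rho(x_n-x)\not\to0$, pick $\varepsilon>0$ and a subsequence $(x_{n_k})$ with $\rho(x_{n_k}-x)\ge\varepsilon$ for all $k$. Every further subsequence keeps this lower bound, so none of them can $\rho$-converge to $x$.

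For the explicit description of the associated sequential topology, I would define $U\subseteq X$ to be \emph{$\rho$-open} if, whenever $x_n\xrightarrow{\rho}x\in U$, we have $x_n\in U$ for all sufficiently large $n$. Closedness of this family under arbitrary unions is immediate. For finite intersections, given a sequence converging into $U_1\cap U_2$, take the larger of the two eventual indices. The empty set and $X$ are trivially open.

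I would then recall Dudley's result: for an $L^\ast$-convergence, every convergent sequence converges in this topology. Moreover, the topology's own sequential convergence may be strictly coarser than the original relation, and axiom (d) is what controls the comparison. I would only cite this, since the corollary merely asserts that the hypotheses hold.

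There is no real obstacle; the one point to keep in mind is uniqueness (b). It relies on convexity through the halving argument, and on axiom (1) of Definition~\ref{modular-vs}, which gives $\rho(z)=0\Rightarrow z=0$. Both are already built into Lemma~\ref{lconvergence}(ii).
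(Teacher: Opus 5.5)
Your proposal is correct and matches the paper: the corollary amounts to reading items (i)--(iv) of Lemma~\ref{lconvergence} as Dudley's axioms. Your sequentially defined open sets also coincide with the paper's modular-ball description in Definition~\ref{tau-open}: if no ball $B_{\rho,1/n}(x)$ lies in $U$, choosing $x_n\in B_{\rho,1/n}(x)\setminus U$ gives $x_n\xrightarrow{\rho}x$ with no term in $U$.

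One correction to your aside: for an $L^{\ast}$-convergence with unique limits, as here, the topology produces no additional convergent sequences. So $\tau_\rho$-convergence and $\rho$-convergence coincide exactly, which is Theorem~\ref{convergencecharacterization}.
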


\begin{remark}{\normalfont 
Our approach here differs from the approach in \cite{dudley}. We undertake the study of the topology generated through modular convergence and characterize the open sets in terms of modular balls, which, surprisingly, need not be open themselves. This description is novel and necessary for the applications to the study of nonlinear boundary value problems, as shown in \cite{AOJA}.
}
\end{remark}
The following result is straightforward:

\begin{proposition}\label{rho-topology}

For $A\subset X$, the following are equivalent:
\begin{enumerate}
\item Every $\rho$-convergent sequence in $A$ has its limit in $A$;
\item For every $x\notin A$, there exists $\varepsilon>0$ such that
$B_{\rho,\varepsilon}(x)\cap A=\varnothing$.
\end{enumerate}

\end{proposition}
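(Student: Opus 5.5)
We prove the two implications separately. Each direction is elementary, and the only tool needed is the definition of $\rho$-convergence (Definition~\ref{Defmodulcon}) together with the modular balls $B_{\rho,\varepsilon}(x)=\{y:\rho(y-x)<\varepsilon\}$.

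\emph{(2) $\Rightarrow$ (1).} Let $(x_n)\subset A$ with $x_n\xrightarrow{\rho}x$, and suppose, for contradiction, that $x\notin A$. By (2) there is $\varepsilon>0$ with $B_{\rho,\varepsilon}(x)\cap A=\varnothing$. Since $\rho(x_n-x)\to0$, property (2) of Definition~\ref{modular-vs} (take $\alpha=-1$) gives $\rho(x-x_n)=\rho(x_n-x)<\varepsilon$ for all large $n$. Hence $x_n\in B_{\rho,\varepsilon}(x)\cap A$ for all large $n$, a contradiction. Note that this symmetry of $\rho$ is the one point where care is needed, since the ball is defined through $\rho(y-a)$.

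\emph{(1) $\Rightarrow$ (2).} We argue by contraposition. Suppose $x\notin A$ and that every ball $B_{\rho,\varepsilon}(x)$ meets $A$. For each $n$, choose $x_n\in A\cap B_{\rho,1/n}(x)$. Then
\[
\rho(x_n-x)=\rho(x-x_n)<1/n\to0 ,
\]
so $x_n\xrightarrow{\rho}x$. Thus $(x_n)$ is a $\rho$-convergent sequence in $A$ whose limit $x$ lies outside $A$, contradicting (1). This uses only countable choice, which is harmless here because the radii $1/n$ form a countable family.

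There is no real obstacle in either direction; the argument is purely formal. Uniqueness of limits (Lemma~\ref{lconvergence}(ii)) is not even needed, because (1) refers to the limit of a given convergent sequence, and we exhibit that limit explicitly. The proposition therefore identifies the sets satisfying (1) or (2) as the closed sets of the sequential modular topology. We do not claim that the balls themselves are closed or open.
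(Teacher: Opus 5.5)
Your proof is correct and is essentially the paper's argument: contraposition via choosing $x_n\in A\cap B_{\rho,1/n}(x)$ for $(1)\Rightarrow(2)$, and eventual membership of $x_n$ in a ball about $x$ disjoint from $A$ for $(2)\Rightarrow(1)$. The appeal to evenness of $\rho$ is harmless but not actually needed, since with the paper's definition $x_n\in B_{\rho,\varepsilon}(x)$ means exactly $\rho(x_n-x)<\varepsilon$.
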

\begin{proof}

If (2) fails at some $x\notin A$, choose
$x_n\in A\cap B_{\rho,1/n}(x)$.  Then $x_n\xrightarrow{\rho}x$, contrary
to (1).  Conversely, if (2) holds and $x_n\in A$ $\rho$-converges to
$x\notin A$, then $x_n$ eventually belongs to a modular ball about $x$
that is disjoint from $A$, a contradiction.

\end{proof}

The modular $\rho$ defines a topology $\tau_\rho$ on $X$ as follows.

\begin{definition}\label{tau-open}
A set $A\subset X$ is $\tau_\rho$-open if for every $x\in A$
there exists $\varepsilon>0$ such that $B_{\rho,\varepsilon}(x)\subset A$. It is a routine exercise to show that the collection $\tau_{\rho}$ is a topology on $X$.
\end{definition}

\begin{remark}
{\normalfont
We emphasize that the modular balls $B_{\rho,\varepsilon}(x)$ appearing
in Definition \ref{tau-open} are not asserted to be $\tau_\rho$-open.
They characterize the open sets of the modular topology, but they need
not themselves be open. This distinction will play an important role
in what follows.}
\end{remark}

From Proposition \ref{rho-topology}, it is clear that { $C \subset X$} is $\tau_\rho$-closed if and only if, for any sequence $(x_n)\subseteq C$ which $\rho$-converges to $x$, it holds $x \in C$.

\begin{corollary}
It follows from Proposition \ref{X_rho-stability} that the unique $\tau_{\rho}$-limit $x$ of a $\tau_{\rho}$-convergent sequence $(x_{n})\subset X_{\rho}$, must be in $X_{\rho}$. Therefore, $X_{\rho}$ is a $\tau_{\rho}$-closed subspace of $X$.
\end{corollary}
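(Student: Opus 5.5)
The statement is a direct consequence of the closedness criterion recorded just before it. By Proposition \ref{rho-topology}, together with the remark following Definition \ref{tau-open}, a set $C\subset X$ is $\tau_\rho$-closed exactly when it contains the $\rho$-limits of all its $\rho$-convergent sequences. To show that $X_\rho$ is $\tau_\rho$-closed, I will therefore take a sequence $(x_n)\subset X_\rho$ with $x_n\xrightarrow{\rho}x$ for some $x\in X$, and prove that $x\in X_\rho$. Uniqueness of the limit, which the statement also mentions, is Lemma~\ref{lconvergence}(ii).

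The key step uses Proposition \ref{X_rho-stability}. Since $\rho(x_n-x)\to 0$, there is an index $N$ with $\rho(x-x_N)=\rho(x_N-x)<1$. Here I use property (2) of Definition \ref{modular-vs} with $\alpha=-1$. This inequality says $x\in B_{\rho,1}(x_N)$. Because $x_N\in X_\rho$, Proposition \ref{X_rho-stability} gives $B_{\rho,1}(x_N)\subseteq X_\rho$, and hence $x\in X_\rho$.

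One point needs care. The statement speaks of a \emph{$\tau_\rho$-convergent} sequence, whereas the argument above uses $\rho$-convergence. Every $\rho$-convergent sequence is $\tau_\rho$-convergent: given a $\tau_\rho$-open $U\ni x$, there is $\varepsilon>0$ with $B_{\rho,\varepsilon}(x)\subset U$, and the sequence eventually lies in that ball. The converse need not hold in general, so I would read the corollary as concerning $\rho$-limits. This is all the closedness criterion requires.

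Alternatively, I can argue from $\tau_\rho$-convergence directly, assuming a $\tau_\rho$-convergent sequence in $X_\rho$ whose limit $x$ lies outside $X_\rho$. I would show that $X\setminus X_\rho$ is $\tau_\rho$-open. Given $y\notin X_\rho$, Proposition \ref{X_rho-stability} (applied with its center at a point of $X_\rho$) yields $B_{\rho,1}(y)\cap X_\rho=\varnothing$. Indeed, if some $a\in X_\rho$ satisfied $\rho(y-a)<1$, then $y\in B_{\rho,1}(a)\subseteq X_\rho$, a contradiction. So $X\setminus X_\rho$ is an open neighbourhood of $x$ that contains no term of the sequence, contradicting convergence. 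This route handles the $\tau_\rho$-convergence formulation cleanly.

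Finally, $X_\rho$ is a vector subspace, as recalled in Section~\ref{modularvectorspaces}. Combined with its closedness, this shows $X_\rho$ is a $\tau_\rho$-closed subspace. I expect no real obstacle; the only subtle point is the distinction between $\rho$-convergence and $\tau_\rho$-convergence, and the open-complement argument settles it.
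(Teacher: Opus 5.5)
Your argument is correct and is the one the paper has in mind: choosing $N$ with $\rho(x-x_N)<1$ places $x$ in $B_{\rho,1}(x_N)\subseteq X_\rho$ by Proposition~\ref{X_rho-stability}, and the sequential closedness criterion finishes the proof (your observation that $X\setminus X_\rho$ is $\tau_\rho$-open works equally well). One side remark needs correcting: the converse does hold here, since Theorem~\ref{convergencecharacterization} shows that $\tau_\rho$-convergence and $\rho$-convergence coincide, so your caveat is unnecessary, and this same theorem is what turns Lemma~\ref{lconvergence}(ii) into uniqueness of $\tau_\rho$-limits.
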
 

\begin{remark}{\normalfont
It is obvious that for any { $x \in X_\rho$}, the complement ${ X_\rho}\setminus \{x\}$ is $\tau_\rho$-open. Hence, $\tau_\rho$ is a $T_1$ topology.
}
\end{remark}

The next theorem follows from Corollary \ref{l*convergence} and  \cite[Theorem 2.1]{dudley}. For the sake of completeness, we include a simple self-contained proof in the modular setting.

\begin{theorem}\label{convergencecharacterization}
For any sequence $(x_n)$ in { $X$}, it holds that $(x_n) \overset{\rho}{\rightarrow} x$ if and only if $(x_n) \rightarrow x$ in $\tau_\rho$ (or shortly  $(x_n) \overset{\tau_\rho}{\rightarrow} x$).
\end{theorem}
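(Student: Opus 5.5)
The proof has two directions; the forward one is immediate from Definition~\ref{tau-open}, and the converse uses Lemma~\ref{lconvergence}(iv) together with the closed-set characterization from Proposition~\ref{rho-topology}.

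\emph{Forward direction.} Suppose $x_n\xrightarrow{\rho}x$ and let $U$ be a $\tau_\rho$-open set containing $x$. By Definition~\ref{tau-open} there is $\varepsilon>0$ with $B_{\rho,\varepsilon}(x)\subset U$. Since $\rho(x_n-x)\to0$, there is $N$ with $\rho(x_n-x)<\varepsilon$ for all $n\ge N$. Hence $x_n\in B_{\rho,\varepsilon}(x)\subset U$ for $n\ge N$, so $x_n\xrightarrow{\tau_\rho}x$. We do not need the ball itself to be open, only that it is contained in $U$.

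\emph{Converse.} Suppose $x_n\xrightarrow{\tau_\rho}x$ but $(x_n)$ does not $\rho$-converge to $x$. By Lemma~\ref{lconvergence}(iv), or directly from its proof, there are $\varepsilon>0$ and a subsequence $(x_{n_k})$ with $\rho(x_{n_k}-x)\ge\varepsilon$ for all $k$. Set $A=\{x_{n_k}:k\in\mathbb N\}$. Then $x\notin A$, since $\rho(0)=0<\varepsilon$.

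The main step is to show that $A$ is $\tau_\rho$-closed, so that $U=X\setminus A$ is a $\tau_\rho$-open neighbourhood of $x$. Then $x_{n_k}\notin U$ for every $k$, which contradicts $\tau_\rho$-convergence: infinitely many terms lie outside $U$.

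By the remark after Definition~\ref{tau-open}, $A$ is closed once every $\rho$-convergent sequence $(y_m)\subset A$ has its limit $y$ in $A$. Take such a sequence and split into two cases.

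\emph{Case 1: some element $a\in A$ occurs infinitely often among the $y_m$.} The constant subsequence equal to $a$ $\rho$-converges to $a$ by Lemma~\ref{lconvergence}(i), and also to $y$ by Lemma~\ref{lconvergence}(iii). Uniqueness of limits, Lemma~\ref{lconvergence}(ii), then gives $y=a\in A$.

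\emph{Case 2: each element of $A$ occurs only finitely often.} Then $(y_m)$ has a subsequence $(x_{n_{k_j}})$ of $(x_{n_k})$ with strictly increasing indices. This subsequence $\rho$-converges to $y$ by Lemma~\ref{lconvergence}(iii). It is also a subsequence of $(x_n)$, so it $\tau_\rho$-converges to $x$.

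Case~2 is the real obstacle: we have to conclude $y=x$, which would contradict $\rho(x_{n_{k_j}}-x)\ge\varepsilon$. From the forward direction, $x_{n_{k_j}}\xrightarrow{\tau_\rho}y$. So we need uniqueness of $\tau_\rho$-limits, i.e.\ a Hausdorff-type separation, which $\tau_\rho$ need not have. I therefore avoid Case~2 by refining the choice of subsequence at the outset. By Lemma~\ref{lconvergence}(iv) we may assume no further subsequence of $(x_{n_k})$ $\rho$-converges to $x$. I would then try a diagonal, Dudley-style argument and pass to a further subsequence of $(x_{n_k})$ that has no $\rho$-convergent subsequence at all. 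Concretely, if every subsequence had a $\rho$-convergent further subsequence, I would set $A$ to be the closure, under sequential limits iterated transfinitely, of the tail sets, and check that $x$ stays outside it using $\rho(x_{n_k}-x)\ge\varepsilon$ and the convexity estimate from the proof of Lemma~\ref{lconvergence}(ii). This is where I expect the genuine difficulty to lie. If it proves too delicate, I would fall back on citing \cite[Theorem~2.1]{dudley} via Corollary~\ref{l*convergence}, since that theorem states exactly that for an $L^\ast$-convergence the sequential topology recovers the convergence.
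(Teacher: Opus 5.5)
\paragraph{There is a genuine gap.} Your forward direction and your Case~1 are correct, but the converse is unfinished. Case~2 is never resolved, and the proposed way around it does not work.

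You cannot in general pass to a further subsequence of $(x_{n_k})$ with no $\rho$-convergent subsequence. If $(x_{n_k})$ itself $\rho$-converges to some $y\neq x$, then every subsequence does, by Lemma~\ref{lconvergence}(iii). Inside a proof by contradiction you have no means of excluding that configuration in advance. The transfinite sequential closure of the tail sets is just their $\tau_\rho$-closure. Showing that $x$ lies outside it is precisely the content of the converse, and you give no argument for it. Citing \cite[Theorem~2.1]{dudley} is a reference, not a proof.

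The diagnosis is also off. You do not need Hausdorffness or uniqueness of $\tau_\rho$-limits. You only need the elementary fact that a $\rho$-convergent sequence together with its $\rho$-limit forms a $\tau_\rho$-closed set.

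\paragraph{The missing idea.} Change the closed set, not the subsequence; this is how the paper argues. Split according to whether $(x_{n_k})$ has a $\rho$-convergent subsequence.

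\emph{No $\rho$-convergent subsequence.} Repeated values are then impossible, since they would give constant convergent subsequences. Your argument shows directly that $A=\{x_{n_k}:k\in\mathbb N\}$ is $\tau_\rho$-closed, and you are done.

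\emph{A $\rho$-convergent subsequence exists.} Say $y_j:=x_{n_{k_j}}\xrightarrow{\rho}y$, and put $C=\{y_j:j\in\mathbb N\}\cup\{y\}$.
\begin{itemize}
\item $C$ is sequentially closed, by the same two-case reasoning you used. Let $(z_m)\subset C$ with $z_m\xrightarrow{\rho}z$. If some value occurs infinitely often, then $z$ equals that value. Otherwise, for each $J$ only finitely many $z_m$ lie in $\{y,y_1,\dots,y_J\}$. So you can extract $z_{m_i}=y_{j_i}$ with $j_i$ strictly increasing. This subsequence $\rho$-converges to $y$, and Lemma~\ref{lconvergence}(ii) gives $z=y\in C$.
\item $x\notin C$. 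Indeed $x\neq y_j$ because $\rho(y_j-x)\ge\varepsilon$, and $x\neq y$ because otherwise $\rho(y_j-x)\to0$.
\end{itemize}
By Proposition~\ref{rho-topology}, $X\setminus C$ is then a $\tau_\rho$-open neighbourhood of $x$ containing no $y_j$. This contradicts $y_j\xrightarrow{\tau_\rho}x$.
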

\begin{proof}
Assume first that $(x_n) \overset{\rho}{\rightarrow} x$. Let $\mathcal{O}$ be a $\tau_\rho$-open set that contains $x$. There exists $\varepsilon > 0$ such that $B_{\rho, \varepsilon}(x) \subset \mathcal{O}$. Since $(x_n) \overset{\rho}{\rightarrow} x$, there exists $n_0 \geq 1$ such that $x_n \in B_{\rho, \varepsilon}(x)$ for all $n \geq n_0$. Hence, $x_n \in \mathcal{O}$ for all $n \geq n_0$, that is, $(x_n) \overset{\tau_\rho}{\rightarrow} x$. Now assume that $(x_n) \overset{\tau_\rho}{\rightarrow} x$. Suppose that $(x_n)$ does not $\rho$-converge to $x$. Then, there exist $\varepsilon_0 > 0$ and a subsequence $(x_{n_k})$ such that $\rho(x - x_{n_k}) \geq \varepsilon_0$ for all $k \geq 1$. Then, either the subsequence $(x_{n_k})$ has a $\rho$-convergent subsequence or no subsequence of $(x_{n_k})$ $\rho$-converges. In the first case, let $(y_j)$ be a subsequence of $(x_{n_k})$ such that $y_j \overset{\rho}{\rightarrow} y \in X_\rho$. Clearly, $x \neq y$. Then, a straightforward argument shows that, by definition, the set
$$C = \{y_j : j \in \mathbb{N}\} \cup \{y\}$$
is $\tau_\rho$-closed and $x \notin C$. Since $(y_j) \overset{\tau_\rho}{\rightarrow} x$, we have $x \in C$, which is a contradiction. On the other hand, if no subsequence of $(x_{n_k})$ $\rho$-converges to any point, the set
$$A = \{x_{n_k} : k \in \mathbb{N}\}$$
is $\tau_\rho$-closed in $X_\rho$ and does not contain $x$. Its complement, $X_\rho \setminus A$, is thus $\tau_\rho$-open and contains $x$. Hence, $X_\rho \setminus A$ must contain $x_{n_k}$ for sufficiently large $n_k$, leading to a contradiction.
\end{proof}

Theorem \ref{convergencecharacterization}, along with the uniqueness of the \(\rho\)-limit, gives the following result:

\begin{corollary}
Any sequence in { $X$} can $\tau_{\rho}$-converge to at most one limit.
\end{corollary}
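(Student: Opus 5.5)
The corollary asserts that $\tau_\rho$-limits of sequences in $X$ are unique. I will reduce this directly to the uniqueness of $\rho$-limits by passing through Theorem~\ref{convergencecharacterization}. No separation property of $\tau_\rho$ (such as Hausdorffness) is used. This matters because it is not yet established, and the modular balls are not known to be open.

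The plan is as follows. Suppose $(x_n)\subset X$ satisfies $x_n\overset{\tau_\rho}{\rightarrow}x$ and $x_n\overset{\tau_\rho}{\rightarrow}y$. First, I apply the ``only if'' direction of Theorem~\ref{convergencecharacterization} (from $\tau_\rho$-convergence to $\rho$-convergence) separately to each limit. This gives $\rho(x_n-x)\to0$ and $\rho(x_n-y)\to0$. Second, I invoke Lemma~\ref{lconvergence}(ii). For completeness, I will recall its one-line convexity estimate:
\[
\rho\Big(\tfrac{x-y}{2}\Big)\le \tfrac12\rho(x-x_n)+\tfrac12\rho(x_n-y)\longrightarrow0 .
\]
This estimate uses property (2) of Definition~\ref{modular-vs} to write $\rho(x-x_n)=\rho(x_n-x)$. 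It forces $\rho((x-y)/2)=0$, and property (1) then gives $x=y$.

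The only substantive step is the implication from $\tau_\rho$-convergence to $\rho$-convergence. That is exactly the nontrivial half of Theorem~\ref{convergencecharacterization}, which I take as given, so I expect no real obstacle here.

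One point deserves care. The proof of Theorem~\ref{convergencecharacterization} is phrased partly inside $X_\rho$, whereas the corollary concerns sequences in $X$. I would note that the theorem is stated for arbitrary sequences in $X$. If one wants to be careful anyway, the closed-set arguments in its proof work verbatim in $X$: the set $C=\{y_j\}\cup\{y\}$ is sequentially closed, and so is a set containing no $\rho$-convergent subsequence. So I will simply cite the theorem as stated.
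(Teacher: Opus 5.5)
Your argument is correct and follows exactly the paper's route: the paper obtains the corollary by using Theorem~\ref{convergencecharacterization} to turn each $\tau_\rho$-limit into a $\rho$-limit, and then applying the uniqueness of $\rho$-limits from Lemma~\ref{lconvergence}(ii). One cosmetic slip: the implication you use ($\tau_\rho$-convergence $\Rightarrow$ $\rho$-convergence) is the ``if'' half of the theorem as stated, not the ``only if'' half, but since you name the direction explicitly nothing is affected.
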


Note that $\tau_{\rho}$ is a topology on $X$. By  $\overline{\tau_{\rho}}$ we denote the subspace topology induced by $\tau_{\rho}$ on $X_{\rho}$. 
\\On account of Proposition \ref{X_rho-stability} it can be quickly seen that  $A\subseteq X_{\rho}$ is $\overline{\tau_{\rho}}$-open iff for any $a\in A$ there exists $\delta>0$ such that  $\{y\in X_{\rho}:\rho(a-y)<\delta\}\subset A$.\\
Thus:
\begin{corollary}\label{technicalcorollary}
It holds: $\overline{\tau_{\rho}}=2^{X_{\rho}}\cap \tau_{\rho}$.
\end{corollary}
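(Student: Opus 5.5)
We will prove the two inclusions separately. The subspace topology has the standard description $\overline{\tau_\rho}=\{U\cap X_\rho: U\in\tau_\rho\}$, while $2^{X_\rho}\cap\tau_\rho$ is the collection of $\tau_\rho$-open sets contained in $X_\rho$. The key input is Proposition \ref{X_rho-stability}: for $a\in X_\rho$, every modular ball $B_{\rho,\varepsilon}(a)$ lies inside $X_\rho$. Consequently, for $a\in X_\rho$ we have
\[
\{y\in X_\rho:\rho(a-y)<\delta\}=B_{\rho,\delta}(a).
\]
This is exactly the observation recorded just before the statement.

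\emph{Inclusion $2^{X_\rho}\cap\tau_\rho\subseteq\overline{\tau_\rho}$.} Take $A\subseteq X_\rho$ with $A\in\tau_\rho$. Then $A=A\cap X_\rho$ is the trace of a $\tau_\rho$-open set on $X_\rho$, so $A\in\overline{\tau_\rho}$ by definition. This direction is trivial.

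\emph{Inclusion $\overline{\tau_\rho}\subseteq 2^{X_\rho}\cap\tau_\rho$.} Take $A=U\cap X_\rho$ with $U\in\tau_\rho$. Clearly $A\subseteq X_\rho$, so it remains to show $A\in\tau_\rho$ using Definition \ref{tau-open}.
\begin{enumerate}
\item[(a)] Fix $a\in A$. Since $U$ is $\tau_\rho$-open and $a\in U$, there is $\varepsilon>0$ with $B_{\rho,\varepsilon}(a)\subseteq U$.
\item[(b)] Since $a\in X_\rho$, Proposition \ref{X_rho-stability} gives $B_{\rho,\varepsilon}(a)\subseteq X_\rho$.
\item[(c)] Combining (a) and (b), $B_{\rho,\varepsilon}(a)\subseteq U\cap X_\rho=A$.
\end{enumerate}
As $a\in A$ was arbitrary, $A$ is $\tau_\rho$-open.

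An alternative route is to note that $X_\rho$ is $\tau_\rho$-open, since each of its points has a ball contained in it by Proposition \ref{X_rho-stability}. The trace topology on an open subset always consists precisely of the open sets contained in that subset, which again gives the equality.

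The only step with any content is (b), the containment of modular balls centred in $X_\rho$, and it is already supplied by Proposition \ref{X_rho-stability}. One should take care not to confuse $\tau_\rho$-open with the modular balls themselves being open, which the paper warns need not hold. The argument above never requires the balls to be open; it only uses the defining property of $\tau_\rho$-open sets.
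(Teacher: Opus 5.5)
Your proof is correct and follows the paper's route. The paper also uses Proposition~\ref{X_rho-stability}, that modular balls centred at points of $X_\rho$ lie inside $X_\rho$, to see that the relatively open subsets of $X_\rho$ are exactly the $\tau_\rho$-open sets contained in $X_\rho$; you write out explicitly the two inclusions that the paper passes over as ``quickly seen.''
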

From now on, all statements refer to the topological space $(X_{\rho},\overline{\tau_{\rho}})$. By virtue of the preceding remark, any subset of $X_{\rho}$ is $\overline{\tau_{\rho}}$-open if and only if it is $\tau_{\rho}$-open. In the interest of notational simplicity, we will slightly abuse the notation and use  $\tau_{\rho}$ instead of $\overline{\tau_{\rho}}$ to denote the subspace topology on $X_{\rho}$.\\

\begin{definition}
Let $\overline{A}^{\rho}$ denote the closure of $A\subseteq X_\rho$
in the modular topology $\tau_\rho$, i.e.,
\[
\overline{A}^{\rho}
:=
\bigcap_{\substack{A\subseteq W\\ W\ \tau_\rho\text{-closed}}} W.
\]
\end{definition}

\smallskip

\begin{definition}\label{fatouproperty}
A modular $\rho$ on a vector space $X$ is said to satisfy the
\emph{Fatou property} if whenever $(y_j)\xrightarrow{\rho}y\in X_\rho$,
it holds that
\[
\rho(y)\leq\liminf_{j\to\infty}\rho(y_j).
\]
\end{definition}

\noindent It follows directly from the definition of the Fatou property and the characterization of closed sets in the modular topology that $\rho$
satisfies the Fatou property if and only if, for every $x\in X_\rho$
and $\varepsilon>0$, the modular ball
\[
\overline{B}_{\rho,\varepsilon}(x)
=
\{v\in X_\rho:\rho(x-v)\leq\varepsilon\}
\]
is closed in the modular topology $\tau_\rho$.\\

\smallskip

The next two propositions reveal the connection between the modular topology $\tau_{\rho}$ and the vector space structure.

\begin{proposition}\label{basic-properties}
The following properties hold:
\begin{enumerate}
\item If $\mathcal{O}$ is a $\tau_\rho$-open subset of $X_\rho$, then $\mathcal{O} + x = \{u + x; u \in \mathcal{O}\}$ is also $\tau_\rho$-open, for any $x \in X_\rho$. Hence, $\mathcal{O}_1 + \mathcal{O}_2$ is $\tau_\rho$-open provided either $\mathcal{O}_1$ or $\mathcal{O}_2$ is $\tau_\rho$-open.
\item If $\mathcal{O}$ is $\tau_\rho$-open and $\alpha \geq 1$, then $ \alpha  \mathcal{O}$ is also $\tau_\rho$-open.
\item For any $x \in \overline{A}^\rho$ and any $\tau_\rho$-open subset $\mathcal{O}$ such that $x \in \mathcal{O}$, we have $\mathcal{O} \cap A \neq \emptyset$.
\item If $A$ is convex, then $\overline{A}^\rho$ is convex.
\end{enumerate}
\end{proposition}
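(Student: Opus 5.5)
We treat the four items separately, working throughout with Definition~\ref{tau-open} for open sets and with the sequential description of closed sets from Proposition~\ref{rho-topology}.

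For (1), translation invariance follows from the identity $B_{\rho,\varepsilon}(u+x)=B_{\rho,\varepsilon}(u)+x$, since $\rho((y+x)-(u+x))=\rho(y-u)$. Take $u+x\in\mathcal{O}+x$ with $u\in\mathcal{O}$, and choose $\varepsilon$ with $B_{\rho,\varepsilon}(u)\subset\mathcal{O}$. Then $B_{\rho,\varepsilon}(u+x)\subset\mathcal{O}+x$. Proposition~\ref{X_rho-stability} guarantees we stay inside $X_\rho$. For the sum, suppose $\mathcal{O}_1$ is open. Then $\mathcal{O}_1+\mathcal{O}_2=\bigcup_{x\in\mathcal{O}_2}(\mathcal{O}_1+x)$ is a union of open sets, hence open.

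For (2), fix $\alpha\ge1$ and $\alpha u\in\alpha\mathcal{O}$, and pick $\varepsilon$ with $B_{\rho,\varepsilon}(u)\subset\mathcal{O}$. If $\rho(y-\alpha u)<\varepsilon$, convexity together with $\rho(0)=0$ gives
\[
\rho\bigl(\alpha^{-1}y-u\bigr)=\rho\bigl(\alpha^{-1}(y-\alpha u)\bigr)\le\alpha^{-1}\rho(y-\alpha u)<\varepsilon .
\]
Hence $\alpha^{-1}y\in\mathcal{O}$, so $y\in\alpha\mathcal{O}$, and therefore $B_{\rho,\varepsilon}(\alpha u)\subset\alpha\mathcal{O}$. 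This is the only place where $\alpha\ge1$ is used. The restriction is genuine, since for $\alpha<1$ one would need the $\Delta_2$-type control $\rho(\alpha^{-1}z)\to0$.

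For (3), argue by contradiction. If $\mathcal{O}\cap A=\emptyset$, then $X_\rho\setminus\mathcal{O}$ is a closed set containing $A$. By the definition of $\overline{A}^\rho$ as an intersection, $\overline{A}^\rho\subset X_\rho\setminus\mathcal{O}$, which contradicts $x\in\mathcal{O}$.

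Item (4) is the main obstacle. The closure is not obviously sequential: $\tau_\rho$ need not be first countable, so $\overline{A}^\rho$ may be larger than the set of $\rho$-limits of sequences in $A$. The plan is therefore to show that the convex hull closure is closed, rather than to work with limits directly. Fix $t\in[0,1]$ and consider $S_t=\{x\in X_\rho: tx+(1-t)\overline{A}^\rho\subset\overline{A}^\rho\}$, arguing in two stages.

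First, for fixed $a\in A$, set $C_a=\{x: tx+(1-t)a\in\overline{A}^\rho\}$. We show $C_a$ is closed. If $x_n\in C_a$ and $x_n\xrightarrow{\rho}x$, then
\[
\rho\bigl((tx_n+(1-t)a)-(tx+(1-t)a)\bigr)=\rho\bigl(t(x_n-x)\bigr)\le t\rho(x_n-x)\to0,
\]
so $tx+(1-t)a$ is a $\rho$-limit of points of the closed set $\overline{A}^\rho$ and lies in it. Since $C_a\supset A$ by convexity of $A$, we get $C_a\supset\overline{A}^\rho$.

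Second, fix $x\in\overline{A}^\rho$. The same estimate, now with $1-t$ in place of $t$, shows that $\{y: tx+(1-t)y\in\overline{A}^\rho\}$ is closed. By the first stage it contains $A$, so it contains $\overline{A}^\rho$.

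Together the two stages give $tx+(1-t)y\in\overline{A}^\rho$ for all $x,y\in\overline{A}^\rho$. The key point is that the affine maps $y\mapsto ty+c$ with $t\in[0,1]$ are $\tau_\rho$-continuous, which follows from convexity and Theorem~\ref{convergencecharacterization}: preimages of closed sets are sequentially closed, hence closed.
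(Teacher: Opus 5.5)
Your proof is correct, and items (1)--(3) are essentially the paper's arguments. In (2) you use the ball $B_{\rho,\varepsilon}(\alpha u)$ where the paper uses the slightly larger $B_{\rho,\alpha\varepsilon}(x)$, which makes no difference.

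For (4), the paper argues by contradiction with $\mathcal{O}=X_\rho\setminus\overline{A}^\rho$. It uses (1)--(3) to see that $\beta\mathcal{O}-(\beta-1)y$ and $\frac{1}{1-\alpha}\mathcal{O}-\frac{\alpha}{1-\alpha}a$ are open neighbourhoods of $x$ and $y$ that must meet $A$. These sets are exactly the preimages of $\mathcal{O}$ under $z\mapsto\alpha z+(1-\alpha)y$ and $z\mapsto\alpha a+(1-\alpha)z$. So the paper's two steps are your two stages in open-set form. Your closed-set version gets continuity of these affine maps directly from $\rho(tz)\le t\rho(z)$ and the sequential description of closed sets, without invoking (1)--(3). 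The auxiliary set $S_t$ you introduce is never actually used and can be dropped.
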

\begin{proof}\leavevmode
\begin{enumerate}
\item Let $y \in \mathcal{O} + x$, then $y - x \in \mathcal{O}$. Since $\mathcal{O}$ is $\tau_\rho$-open, there exists $\varepsilon > 0$ such that $B_{\rho,\varepsilon}(y - x) \subset \mathcal{O}$. Clearly, $B_{\rho,\varepsilon}(y) \subset \mathcal{O} + x$. As for $\mathcal{O}_1 + \mathcal{O}_2$, note that 
\[\mathcal{O}_1 + \mathcal{O}_2 = \bigcup_{y \in \mathcal{O}_2} \mathcal{O}_1 + y = \bigcup_{x \in \mathcal{O}_1} \mathcal{O}_2 + x,\]
yielding the desired conclusion.
\item Let $x \in  \alpha  \mathcal{O}$. Then $ \beta  x \in \mathcal{O}$, where $\beta = 1/\alpha \in (0,1]$. Since $\mathcal{O}$ is $\tau_\rho$-open, there exists $\varepsilon > 0$ such that $B_{\rho,\varepsilon}( \beta  x) \subset \mathcal{O}$. For any $y \in B_{\rho,\alpha \varepsilon}(x)$, we have
\[\rho\left( \beta  x -  \beta  y\right) \leq  \beta  \rho(x - y) <  \beta   \alpha  \varepsilon = \varepsilon,\]
i.e., $ \beta  y  \in B_{\rho,\varepsilon}( \beta  x) \subset \mathcal{O}$. Hence $y \in  \alpha  \mathcal{O}$, which forces $B_{\rho, \alpha \varepsilon}(x) \subset \alpha \ \mathcal{O}$. Therefore, $\alpha \ \mathcal{O}$ is $\tau_\rho$-open. 
\item Suppose $\mathcal{O} \cap A = \emptyset$. Then $A \subset \mathcal{O}^c = X_\rho \setminus \mathcal{O}$. Since $\mathcal{O}$ is $\tau_\rho$-open, it follows that $\mathcal{O}^c$ is $\tau_\rho$-closed. Hence, $\overline{A}^\rho \subset \mathcal{O}^c$, i.e., $\mathcal{O} \cap \overline{A}^\rho = \emptyset$. This contradicts the assumption that $x \in \mathcal{O} \cap \overline{A}^\rho$.
\item Suppose $A$ is convex. Let $x, y \in \overline{A}^\rho$ and $\alpha \in (0, 1)$.  Assume $\alpha x + (1 - \alpha) y$ is not in $\overline{A}^\rho$.  Since $\mathcal{O} = X_\rho \setminus \overline{A}^\rho$ is $\tau_\rho$-open and $\alpha x + (1 - \alpha)y \in \mathcal{O}$, we can deduce that
\[x \in \mathcal{O}_1 =  \beta  \mathcal{O} - (\beta -1)\ y,\]
where $\beta = 1/\alpha$.  Since $\mathcal{O}_1$ is $\tau_\rho$-open, by the previously proven properties, $A \cap \mathcal{O}_1 \neq \emptyset$. Let $a \in A \cap \mathcal{O}_1$. Then there exists $x_0 \in \mathcal{O}$ such that
\[a =  \beta  x_0 - (\beta -1)\ y,\]
implying
\[y = \frac{1}{1 - \alpha} x_0 - \frac{\alpha}{1 - \alpha} a \in \mathcal{O}_2 = \frac{1}{1 - \alpha} \mathcal{O} - \frac{\alpha}{1 - \alpha} a.\]
Thus, $A \cap \mathcal{O}_2 \neq \emptyset$. Let $b \in A \cap \mathcal{O}_2$. Since $b \in \mathcal{O}_2$, there exists $y_0 \in \mathcal{O}$ such that
\[b = \frac{1}{1 - \alpha} y_0 - \frac{\alpha}{1 - \alpha} a,\]
implying $y_0 = \alpha a + (1 - \alpha)b \in A$, by the convexity of $A$. Therefore, $y_0 \in A \cap \mathcal{O}$ which contradicts the assumption that $\mathcal{O} \cap A = \emptyset$.
\end{enumerate}
\end{proof}

The preceding result has the following fundamental consequence:

\begin{proposition}\label{subspace}
Let $A$ be a vector subspace of $X_{\rho}$. Then $\overline{A}^\rho$ is a $\tau_\rho$-closed vector subspace of $X_{\rho}$.
\end{proposition}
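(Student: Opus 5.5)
The plan is to show directly that $\overline{A}^\rho$ is closed under addition and under multiplication by arbitrary real scalars. It is $\tau_\rho$-closed by definition, and it contains $0$ because $0\in A$. Addition will be handled through the convexity statement, Proposition~\ref{basic-properties}(4); scalar multiplication will be handled through the scaling behaviour of open sets, Proposition~\ref{basic-properties}(2), together with the symmetry property (2) of the modular.

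\textbf{Step 1: scalar multiples.} Let $x\in\overline{A}^\rho$ and $\lambda\in\mathbb{R}$; the case $\lambda=0$ is trivial, since $0\in A$.

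For $\lambda=-1$, I use that $\rho(-u)=\rho(u)$. This makes the map $u\mapsto -u$ preserve modular balls, so it is a $\tau_\rho$-homeomorphism. Since $-A=A$, we get $-\overline{A}^\rho=\overline{A}^\rho$.

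For $0<|\lambda|\le 1$, I argue by contradiction. Suppose $\lambda x\notin\overline{A}^\rho$, so $\lambda x$ lies in the open set $\mathcal{O}=X_\rho\setminus\overline{A}^\rho$. By Proposition~\ref{basic-properties}(2) the set $\lambda^{-1}\mathcal{O}$ is $\tau_\rho$-open, since $|\lambda^{-1}|\ge 1$ (after composing with $u\mapsto -u$ if needed). It contains $x$. By Proposition~\ref{basic-properties}(3) it therefore meets $A$ at some point $a$. Then $\lambda a\in\mathcal{O}\cap A$, because $A$ is a subspace, and this contradicts $A\subseteq\overline{A}^\rho$.

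For $|\lambda|>1$, I cannot reverse the inequality in this way. Instead I use that $\lambda B_{\rho,\varepsilon}(y)\supseteq B_{\rho,\varepsilon}(\lambda y)$? That inclusion is false in general, and this is exactly where the lack of $\Delta_2$ bites. The route I would try first is the closed-set characterization: the set $C=\{z\in X_\rho:\lambda z\in\overline{A}^\rho\}$ contains $A$. If $C$ is $\tau_\rho$-closed, then $\overline{A}^\rho\subseteq C$ and we are done. Closedness of $C$ means: if $z_n\in C$ and $z_n\xrightarrow{\rho}z$, then $\lambda z\in\overline{A}^\rho$. This does not follow from convergence alone, since $\lambda z_n$ need not $\rho$-converge to $\lambda z$.

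\textbf{Step 2: sums.} Given $x,y\in\overline{A}^\rho$, Proposition~\ref{basic-properties}(4) applied to the convex set $A$ gives $\tfrac12(x+y)\in\overline{A}^\rho$. Thus closure under the single scalar $2$ suffices to get $x+y\in\overline{A}^\rho$. Combining this with Step 1, all multiples $\lambda x$ for $|\lambda|\le1$ are in $\overline{A}^\rho$, and then repeated midpoint/doubling gives all $\lambda$ and all linear combinations.

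So the key step, and the main obstacle, is closure under multiplication by $2$ (equivalently, by any fixed $\lambda>1$). I would try to prove it via the closed-set characterization. Take $z$ in the closure of the set $A_1=\{w:2w\in\overline{A}^\rho\}$, which is a convex set containing $A$ and $\tfrac12\overline{A}^\rho$. The aim is to show that $A_1$ is itself $\tau_\rho$-closed. Since $\overline{A}^\rho$ is closed, a sequence $w_n\to w$ with $2w_n\in\overline{A}^\rho$ would need $2w_n$ to accumulate, in $\tau_\rho$, at $2w$.

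Failing a direct argument, I would use a transfinite description of the sequential closure: $\overline{A}^\rho=\bigcup_{\alpha<\omega_1}A_\alpha$, where $A_0=A$ and each $A_{\alpha+1}$ consists of $\rho$-limits of sequences in $A_\alpha$. I would then prove by induction that each $A_\alpha$ is a subspace. Closure under sums at the successor stage uses $\rho\big(\tfrac12((x_n+y_n)-(x+y))\big)\to0$, which places $\tfrac12(x+y)$ in $A_{\alpha+1}$; the doubling issue reappears here, and I expect this to be the real crux.
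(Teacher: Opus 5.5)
\textbf{There is a genuine gap.} Your proposal never proves that $\overline{A}^\rho$ is closed under addition, or equivalently under $x\mapsto 2x$. You explicitly leave this as the unresolved crux. Without it, Step 1 only covers $|\lambda|\le 1$ and Step 2 only yields midpoints, so the argument is incomplete.

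The obstruction you identify is real for the \emph{map} $u\mapsto 2u$, which need not be $\tau_\rho$-continuous without $\Delta_2$. It is not an obstruction for the statement, however. Since $2x=x+x$, it suffices to show that sums of elements of the closure stay in the closure, and this can be done one summand at a time using only translations. Translations are $\tau_\rho$-homeomorphisms by Proposition~\ref{basic-properties}(1), because $\rho((u+c)-(w+c))=\rho(u-w)$.

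This is exactly the paper's argument. Suppose $x,y\in\overline{A}^\rho$ but $x+y\in\mathcal{O}:=X_\rho\setminus\overline{A}^\rho$.
\begin{itemize}
\item Then $x\in\mathcal{O}-y$, which is open, so Proposition~\ref{basic-properties}(3) gives $a\in A$ with $a+y\in\mathcal{O}$.
\item Hence $y\in\mathcal{O}-a$, which is again open, so there is $b\in A$ with $a+b\in\mathcal{O}$.
\item This contradicts $a+b\in A\subseteq\overline{A}^\rho$.
\end{itemize}
Equivalently: translation by $a\in A$ maps $A$ onto $A$, hence maps $\overline{A}^\rho$ into itself. Then translation by $y\in\overline{A}^\rho$ maps $A$ into $\overline{A}^\rho$, hence maps $\overline{A}^\rho$ into $\overline{A}^\rho$.

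Once addition is available, your own Step 1 finishes the proof. Your argument for $0<|\lambda|\le 1$ via Proposition~\ref{basic-properties}(2) and (3) is correct. Any $\lambda$ can be written as $k\mu$ with $k\in\mathbb{N}$ and $|\mu|\le 1$, so $\lambda x=\mu x+\cdots+\mu x\in\overline{A}^\rho$.

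The paper proceeds slightly differently after addition. It obtains integer multiples from addition and non-integer $\alpha>0$ by convexity between $kx$ and $(k+1)x$, using Proposition~\ref{basic-properties}(4). It then handles $-1$ by noting that $-\mathcal{O}$ is open. Your scaling argument for $|\lambda|\le 1$ is a slightly more direct substitute for the convexity step.

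The same idea repairs your transfinite route. At the successor stage, do not add two convergent sequences. Instead use that $x_n\xrightarrow{\rho}x$ implies $x_n+c\xrightarrow{\rho}x+c$ for a fixed $c$, and induct first with $c\in A$, then with $c\in\overline{A}^\rho$.
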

\begin{proof}
It will be shown first that if $x, y \in \overline{A}^\rho$, then $x + y \in \overline{A}^\rho$. Suppose not, i.e., $x + y \in \mathcal{O} = X_\rho \setminus \overline{A}^\rho$. Then $x \in (\mathcal{O} - y)$, which is $\tau_\rho$-open. This forces $A \cap (\mathcal{O} - y) \neq \emptyset$. Let $a \in A \cap (\mathcal{O} - y)$. There exists $x_0 \in \mathcal{O}$ such that $a = x_0 - y$, implying $y = x_0 - a \in (\mathcal{O} - a)$. Again, since $\mathcal{O} - a$ is open, it follows that $A \cap (\mathcal{O} - a) \neq \emptyset$. Let $b \in A \cap (\mathcal{O} - a)$. Then there exists $y_0 \in \mathcal{O}$ such that $b = y_0 - a$, implying $y_0 = b + a$. Since $A$ is a subspace, we have $b + a \in A$. Therefore, the assumption implies that $y_0 \in A \cap \mathcal{O}$, contradicting the fact that $A \cap \mathcal{O} = \emptyset$. \\
Next, observe that if $x \in \overline{A}^\rho$ and $\alpha \in \mathbb{R}$, then $ \alpha  x \in \overline{A}^\rho$. Without loss of generality, assume $\alpha \neq 0$.  Take first $\alpha > 0$. The first part of the proof shows that $k\ x \in \overline{A}^\rho$ for any $k \in \mathbb{N}$. Thus, it may be assumed that $\alpha$ is not an integer. In this case, there exists $k \in \mathbb{N}$ such that $k < \alpha < k + 1$. This implies the existence of $\theta \in (0,1)$ such that $\alpha = \theta k + (1 - \theta)(k + 1)$. Hence,
\[ \alpha  x = \theta\ k\ x + (1 - \theta)(k + 1)\ x.\]
Hence, $\alpha \ x \in \overline{A}^\rho$ since $\overline{A}^\rho$ is convex by Proposition \ref{basic-properties}.\\
Finally, we show that if $x \in \overline{A}^\rho$, then $-x \in \overline{A}^\rho$. This follows from similar reasoning and the fact that if $\mathcal{O}$ is $\tau_\rho$-open, then $-\mathcal{O}$ is also $\tau_\rho$-open. To see this, let $\mathcal{O}$ be a $\tau_\rho$-open subset of $X_\rho$. Let $y \in -\mathcal{O}$. Then $-y \in \mathcal{O}$, implying the existence of $\varepsilon > 0$ such that $B_{\rho,\varepsilon}(-y) \subset \mathcal{O}$. Using the properties of the modular, it is easily seen that $z \in B_{\rho,\varepsilon}(-y)$ if and only if $-z \in B_{\rho,\varepsilon}(y)$. Therefore, $B_{\rho,\varepsilon}(y) \subset -\mathcal{O}$, completing the proof that $-\mathcal{O}$ is $\tau_\rho$-open.
\end{proof}

\section{The Luxemburg norm topology as the first-countable envelope of scaled modular topologies}\label{lambda}

Modular convergence is defined in \cite[Definition 5.1]{M:1983} in the following way: 
A sequence $(x_j)$ in a modular space $(X,\rho)$ converges to $x$ iff

\begin{statement}\label{Mdefinition} there exists $\lambda>0$ such that $\rho(\lambda(x_j-x))\rightarrow 0$ as $j\rightarrow \infty.$
\end{statement}
Observe that for any $\lambda>0$, the functional $\rho_{\lambda}(z):=\rho(\lambda z)$ is a modular on $X$ and thus, in terms of our terminology, Statement \ref{Mdefinition} is equivalent to 
\begin{statement}\label{Odefinition}
for some $\lambda>0$, $\rho_{\lambda}(x_j-x)\rightarrow 0$ as $j\rightarrow \infty$.
\end{statement}
The latter can be rephrased as requiring that for some $\lambda>0$, $x_j\overset{\rho_{\lambda}}\rightarrow x$ as $j\rightarrow \infty$.   \\
Thus, the modular topology introduced in the previous section corresponds to $\lambda=1$ in the natural family $(\tau_{\lambda})$ of $\rho_{\lambda}$-topologies. This is not an artificial particularization. The consideration of this special case will be justified by the fundamental role it plays in the treatment of boundary value problems, illustrated in Section \ref{applications}.\\

A fundamental question is how the scaled sequential topologies relate to the
Luxemburg norm topology.  Theorem~\ref{norminitial} identifies the latter as
the weakest first-countable topology stronger than every $\tau_\lambda$.
Although the Luxemburg norms generated by the $\rho_\lambda$ are proportional,
the sequential topologies are strictly ordered when $\Delta_2$ fails, as proved
below. This phenomenon underlines the role of the modular topology as a powerful analytical resource in situations that are out of reach of the  tools associated to the normed space structure.\\
It is straightforward to show that, as anticipated in the paragraph below Statement \ref{Odefinition} above, for each $\lambda > 0$, \(\rho_{\lambda}: X \to [0, \infty]\) given by \(\rho_{\lambda}(x) = \rho(\lambda x)\) is a convex modular on \(X\). Moreover, for any \(\lambda > 0\), we have
\[
X_{\rho_{\lambda}} := \{x \in X: \rho_{\lambda}(\alpha x) < \infty \, \text{for some}\, \alpha > 0\} = X_{\rho},
\]
i.e., all the modulars \(\rho_{\lambda}\) define the same modular vector space \(X_{\rho}\). 

Let $\tau_\lambda$ be the sequential topology associated with $\rho_\lambda$.
If $0<\lambda_1<\lambda_2$, convexity gives
\[
\rho(\lambda_1 z)
\le \frac{\lambda_1}{\lambda_2}\rho(\lambda_2 z),
\]
so $\tau_{\lambda_1}\subseteq\tau_{\lambda_2}$.
\begin{proposition}\label{scaled-strictness}
If $\rho$ fails $\Delta_2$, then
$\tau_{\lambda_1}\subsetneq\tau_{\lambda_2}$ whenever
$0<\lambda_1<\lambda_2$.
\end{proposition}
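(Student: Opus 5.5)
The plan is to exhibit a set that is $\tau_{\lambda_2}$-closed but not $\tau_{\lambda_1}$-closed. The inclusion $\tau_{\lambda_1}\subseteq\tau_{\lambda_2}$ is already established above, so only strictness needs proof. Throughout, $t:=\lambda_2/\lambda_1>1$.

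\emph{Step 1: the failure of $\Delta_2$ transfers to the ratio $t$.} I claim there is a sequence $(z_j)\subset X_\rho$ with $\rho(z_j)\to0$ but $\rho(tz_j)\not\to0$. Suppose instead that $\rho(z_j)\to0$ always implies $\rho(tz_j)\to0$. Iterating this $k$ times gives $\rho(t^kz_j)\to0$. Choose $k$ with $t^k\ge2$. Convexity, in the form displayed before the proposition, gives
\[
\rho(2z_j)\le \frac{2}{t^k}\,\rho(t^kz_j)\longrightarrow0 .
\]
This is exactly the $\Delta_2$-condition, a contradiction.

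Now take such a sequence $(z_j)$. Passing to a subsequence, we may assume $\rho(tz_j)\ge\varepsilon>0$ for all $j$ and $\rho(z_j)<\infty$, so that $z_j\in X_\rho$. Each $z_j$ is nonzero. Moreover, no value occurs infinitely often: if $z_j=z$ for infinitely many $j$, then $\rho(z)=0$, hence $z=0$, contradicting $\rho(tz)\ge\varepsilon$. So after passing to a further subsequence the $z_j$ are pairwise distinct. Put $y_j:=z_j/\lambda_1$. Then
\[
\rho_{\lambda_1}(y_j)=\rho(z_j)\to0,\qquad \rho_{\lambda_2}(y_j)=\rho(tz_j)\ge\varepsilon .
\]

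\emph{Step 2: the set $A:=\{y_j:j\in\mathbb N\}$ is not $\tau_{\lambda_1}$-closed.} The sequence $(y_j)$ $\rho_{\lambda_1}$-converges to $0$, and $0\notin A$. By the sequential characterization of closed sets (Proposition~\ref{rho-topology}, applied to $\rho_{\lambda_1}$), $A$ is not $\tau_{\lambda_1}$-closed.

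\emph{Step 3: $A$ is $\tau_{\lambda_2}$-closed.} This is the step needing the most care. Let $(a_n)\subset A$ be a sequence with $a_n\xrightarrow{\rho_{\lambda_2}}y$. We distinguish two cases.

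If some $y_j$ occurs infinitely often in $(a_n)$, then the constant subsequence equal to $y_j$ also converges to $y$ (Lemma~\ref{lconvergence}(iii)). Uniqueness of limits (Lemma~\ref{lconvergence}(i),(ii)) then gives $y=y_j\in A$.

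Otherwise, we can extract a subsequence of the form $y_{j_k}$ with $j_k\uparrow\infty$, and it still satisfies $y_{j_k}\xrightarrow{\rho_{\lambda_2}}y$. Since $\rho_{\lambda_1}\le(\lambda_1/\lambda_2)\rho_{\lambda_2}$, we also have $y_{j_k}\xrightarrow{\rho_{\lambda_1}}y$. But $y_{j_k}\xrightarrow{\rho_{\lambda_1}}0$, so uniqueness of $\rho_{\lambda_1}$-limits forces $y=0$. Then $\rho_{\lambda_2}(y_{j_k})\to0$, which contradicts $\rho_{\lambda_2}(y_{j_k})\ge\varepsilon$. So this case cannot occur.

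Hence every $\rho_{\lambda_2}$-convergent sequence in $A$ has its limit in $A$, and $A$ is $\tau_{\lambda_2}$-closed. Its complement is therefore $\tau_{\lambda_2}$-open but not $\tau_{\lambda_1}$-open, which proves $\tau_{\lambda_1}\subsetneq\tau_{\lambda_2}$.
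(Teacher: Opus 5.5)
Your proof is correct, but it takes a different route from the paper's. Your Step 1 is the same as the paper's argument: iterate the implication $\rho(z_j)\to0\Rightarrow\rho(tz_j)\to0$ until $t^k\ge2$, then use convexity to get $\Delta_2$.

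The two proofs differ in how they pass from ``the convergent sequences differ'' to ``the topologies differ.''

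\begin{itemize}
\item The paper argues by contradiction. If $\tau_{\lambda_1}=\tau_{\lambda_2}$, Theorem~\ref{convergencecharacterization} identifies $\tau_\lambda$-convergence with $\rho_\lambda$-convergence, so the two modulars have the same null sequences. Your iteration step then forces $\Delta_2$.
\item You never invoke Theorem~\ref{convergencecharacterization}. Instead you build an explicit witness $A=\{y_j\}$ and check with Proposition~\ref{rho-topology} and Lemma~\ref{lconvergence} that $A$ is $\tau_{\lambda_2}$-closed but not $\tau_{\lambda_1}$-closed.
\end{itemize}

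The details of your witness argument hold up:
\begin{itemize}
\item Removing repeated values is justified, since a value recurring infinitely often would have modular $0$.
\item The two-case analysis in Step 3 is sound. The inequality $\rho_{\lambda_1}\le(\lambda_1/\lambda_2)\rho_{\lambda_2}$, together with uniqueness of $\rho_{\lambda_1}$-limits, forces the limit to be $0$, which contradicts $\rho_{\lambda_2}(y_j)\ge\varepsilon$.
\end{itemize}

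Your Step 3 is essentially a specialised, inlined version of the case analysis inside the proof of Theorem~\ref{convergencecharacterization}. So your argument is more self-contained and produces a concrete open set $X_\rho\setminus A\in\tau_{\lambda_2}\setminus\tau_{\lambda_1}$. The paper's version is shorter because it delegates that work to the theorem.
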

\begin{proof}
Suppose $\tau_{\lambda_1}=\tau_{\lambda_2}$ and put
$a=\lambda_2/\lambda_1>1$.  Apply the equality of the two topologies to
$y_j=x_j/\lambda_1$.  By Theorem~\ref{convergencecharacterization},
for every sequence $(x_j)$,
\[
\rho(x_j)\to0\quad\Longleftrightarrow\quad \rho(ax_j)\to0.
\]
Applying this equivalence successively yields
$\rho(a^m x_j)\to0$ for every $m$.  Choose $m$ with $a^m\ge2$.
Then convexity gives
\[
\rho(2x_j)\le \frac{2}{a^m}\rho(a^m x_j)\longrightarrow0,
\]
which is the $\Delta_2$-condition.  The contrapositive proves strictness.
\end{proof}

The family $(\tau_{\lambda})$ provides a collection of topologies that capture modular convergence at different scales. However, without the $\Delta_2$-condition, none of these topologies individually yields a structure fully compatible with the linear operations on $X$. This naturally leads to the question of whether one can combine this family into a single topology that better reflects the algebraic structure of the space.\\
To address this question, we consider two extremal topologies on $X$ associated with the family $(\tau_\lambda)$: the initial topology $\tau^i$ and the final topology $\tau^f$.

\begin{definition}
Let $\rho$ be a convex modular on a vector space $X$.
\begin{enumerate}
\item The final topology associated with the family $(\tau_\lambda)$ is the strongest among the topologies $\tau$ that make every inclusion $j_{\lambda}:(X,\tau_{\lambda})\hookrightarrow (X,\tau)$ continuous. It is
given by
\[
\tau^f:=\bigcap_{\lambda>0}\tau_\lambda.
\]

\item The initial topology, denoted by $\tau^i$, is the weakest topology
on $X$ that is stronger than every $\tau_\lambda$, $\lambda>0$. Equivalently, $\tau^i$ is the weakest among the topologies $\tau$ that make every inclusion $i_{\lambda}:(X,\tau)\rightarrow(X,\tau_{\lambda})$ continuous.
\end{enumerate}
\end{definition}

\noindent These two extremal constructions encode complementary aspects of the
family $(\tau_\lambda)$. The topology $\tau^i$ reflects the common
structure generated by all scales, while $\tau^f$ captures the topology
common to all members of the family. As we shall see, these topologies
play a crucial role in recovering partial compatibility between modular
convergence and the linear structure of $X$, and they provide a natural
bridge to the Luxemburg norm topology.\\

The open subsets for both topologies \(\tau^f\) and \(\tau^i\) are characterized in the following proposition:

\begin{proposition}\label{tau^i-tau^f-characterization}
The following hold:
\begin{enumerate}
\item The open sets in \(\tau^f\) are those subsets \(Y \subseteq X_{\rho}\) that are \(\tau_{\lambda}\)-open for every \(\lambda > 0\).
\item The open sets in \(\tau^i\) consist of arbitrary unions of finite intersections of the form \(A_{\lambda_1} \cap A_{\lambda_2}\cap \dots \cap A_{\lambda_N}\), where \(A_{\lambda_j} \in \tau_{\lambda_j}\). By virtue of the inclusion \(\tau_{\alpha} \subseteq \tau_{\beta} \) whenever \(\alpha \leq  \beta \), given any open set $A$ in \(\tau^i\) there exist $J\subseteq (0,\infty)$ such that
\[
A=\bigcup_{\lambda \in J } A_{\lambda},
\]
where each \(A_{\lambda}\) is open in \(\tau_{\lambda}\).
\end{enumerate}
\end{proposition}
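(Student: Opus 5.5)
Both parts reduce to general facts about intersections and joins of topologies, combined with the monotonicity $\tau_{\alpha}\subseteq\tau_{\beta}$ for $\alpha\le\beta$ established just before Proposition~\ref{scaled-strictness}.

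For (1), I will first check that $\bigcap_{\lambda>0}\tau_\lambda$ is a topology. It contains $\varnothing$ and $X_\rho$, and it is closed under arbitrary unions and finite intersections, because each $\tau_\lambda$ is. Each inclusion $j_\lambda:(X,\tau_\lambda)\to(X,\tau^f)$ is continuous, since $\tau^f\subseteq\tau_\lambda$. Conversely, if $\tau$ makes every $j_\lambda$ continuous, then every $\tau$-open set is $\tau_\lambda$-open for each $\lambda$, so $\tau\subseteq\tau^f$. Thus $\tau^f$ is the strongest such topology, and its open sets are exactly the subsets $Y\subseteq X_\rho$ that are $\tau_\lambda$-open for every $\lambda>0$. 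Here I use the convention of Corollary~\ref{technicalcorollary} that all topologies live on $X_\rho$, which is common to every $\rho_\lambda$.

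For the first claim of (2), I will use the standard fact that the topology generated by a family of topologies has the union $\bigcup_\lambda\tau_\lambda$ as a subbase. So $\tau^i$ consists of arbitrary unions of finite intersections $A_{\lambda_1}\cap\dots\cap A_{\lambda_N}$ with $A_{\lambda_j}\in\tau_{\lambda_j}$. The collection of such unions is a topology containing every $\tau_\lambda$. Any topology containing every $\tau_\lambda$ must contain these sets. Hence this collection is the weakest topology stronger than every $\tau_\lambda$, and that is the same as requiring every $i_\lambda:(X,\tau)\to(X,\tau_\lambda)$ to be continuous.

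The only non-formal step is the reduction to single-index sets. Given $A_{\lambda_1}\cap\dots\cap A_{\lambda_N}$, set $\lambda^*=\max_j\lambda_j$. Since $\lambda_j\le\lambda^*$, monotonicity gives $A_{\lambda_j}\in\tau_{\lambda^*}$ for every $j$. The finite intersection is therefore a single $\tau_{\lambda^*}$-open set. Every $\tau^i$-open set $A$ is then a union $\bigcup_{k\in K}B_k$ with $B_k\in\tau_{\mu_k}$. Let $J=\{\mu_k:k\in K\}$. For $\lambda\in J$, define $A_\lambda=\bigcup\{B_k:\mu_k=\lambda\}$; this is a union of $\tau_\lambda$-open sets, hence $\tau_\lambda$-open. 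This gives $A=\bigcup_{\lambda\in J}A_\lambda$.

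I do not expect a real obstacle; the point needing care is the direction of the inclusion $\tau_{\alpha}\subseteq\tau_{\beta}$. It comes from the convexity estimate $\rho(\lambda_1 z)\le\frac{\lambda_1}{\lambda_2}\rho(\lambda_2 z)$: $\rho_{\lambda_2}$-convergence implies $\rho_{\lambda_1}$-convergence, so $\tau_{\lambda_2}$ has fewer convergent sequences. By Theorem~\ref{convergencecharacterization} and Proposition~\ref{rho-topology}, it therefore has more closed sets and is the finer topology. With that direction confirmed, taking the maximum index is the correct choice.
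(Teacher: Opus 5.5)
Your proposal is correct and follows the paper's argument. Part (1) is the definition of the intersection (final) topology, and part (2) is the subbase description of the generated topology, with monotonicity $\tau_\alpha\subseteq\tau_\beta$ used at the largest index $\lambda^*$ to collapse each finite intersection to a single scale. You simply supply details the paper leaves implicit: the check that $\bigcap_{\lambda>0}\tau_\lambda$ is a topology, the direction of the inclusion $\tau_\alpha\subseteq\tau_\beta$, and the grouping by index that produces $J$ and the sets $A_\lambda$.
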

\begin{proof}
The first assertion is the definition of the intersection topology.
For $(2)$, observe that any open set in $\tau^i$ is a union of finite intersections of open sets in some $\tau_{\lambda}$, so monotonicity gives the final
single-scale union representation.
\end{proof}
\begin{remark}\label{open-charac-tau^i}
{\normalfont Let $\mathcal O\in\tau^i$.  By Proposition
\ref{tau^i-tau^f-characterization}, write
$\mathcal O=\bigcup_{\lambda\in J}A_\lambda$ with
$A_\lambda\in\tau_\lambda$.  For $n\in\mathbb N$ put
\[
\mathcal O_n=\bigcup_{\lambda\in J\cap(0,n]}A_\lambda.
\]
Then $\mathcal O_n\in\tau_n$, the family is increasing, and
$\mathcal O=\bigcup_{n=1}^\infty\mathcal O_n$.}
\end{remark}

It is a routine exercise to verify that a set \(C \subseteq X_{\rho}\) is \(\tau^f\)-closed if and only if \(C\) is \(\tau_{\lambda}\)-closed for every \(\lambda > 0\), i.e., given any sequence \((x_j) \subseteq C\) such that, for some \(\lambda > 0\), \(\rho(\lambda (x_i - x)) \to 0\) as \(j \to \infty\), we have \(x \in C\). Since \(\tau^f\)-open subsets are also \(\tau_{\lambda}\)-open for every \(\lambda > 0\), we obtain the following fact:

\begin{proposition}
If $(x_j)\subset X_{\rho}$ and there exists $\lambda>0$ such that $(x_j)\overset{\tau_\lambda}{\rightarrow}x$, then $(x_j)\overset{\tau^f}{\rightarrow}x$.
\end{proposition}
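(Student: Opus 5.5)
The plan is to work directly from the definition of convergence in a topology and to use the inclusion $\tau^f\subseteq\tau_\lambda$. The only other ingredients are that modular convergence coincides with $\tau_\lambda$-convergence (Theorem~\ref{convergencecharacterization} applied to $\rho_\lambda$) and the description of $\tau^f$-open sets in Proposition~\ref{tau^i-tau^f-characterization}(1).

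Fix $\lambda>0$ with $(x_j)\overset{\tau_\lambda}{\rightarrow}x$. Since $\rho_\lambda$ is a convex modular on $X$, Theorem~\ref{convergencecharacterization} applied to $\rho_\lambda$ gives
\[
\rho(\lambda(x_j-x))\to 0 .
\]
Because $x_j\in X_\rho=X_{\rho_\lambda}$, Proposition~\ref{X_rho-stability} for $\rho_\lambda$ places the limit $x$ in $X_\rho$. Hence $x$ is a legitimate point of the space $(X_\rho,\tau^f)$.

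Now let $\mathcal{O}$ be any $\tau^f$-open set containing $x$. By Proposition~\ref{tau^i-tau^f-characterization}(1), $\mathcal{O}$ is $\tau_\mu$-open for every $\mu>0$, and in particular $\mathcal{O}\in\tau_\lambda$. Since $(x_j)$ converges to $x$ in $\tau_\lambda$, there is $j_0$ with $x_j\in\mathcal{O}$ for all $j\ge j_0$. As $\mathcal{O}$ was an arbitrary $\tau^f$-neighbourhood of $x$, this says exactly that $(x_j)\overset{\tau^f}{\rightarrow}x$.

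There is no real obstacle here: the statement is a restatement of the fact that a coarser topology preserves convergent sequences. The only point needing care is to confirm that the limit $x$ lies in $X_\rho$, so that the subspace-topology convention of Corollary~\ref{technicalcorollary} applies to it. That is handled by Proposition~\ref{X_rho-stability} as above.
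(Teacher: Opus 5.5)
Your proof is correct and is essentially the paper's argument: $\tau^f=\bigcap_{\mu>0}\tau_\mu\subseteq\tau_\lambda$, so every $\tau^f$-open set containing $x$ is $\tau_\lambda$-open and eventually contains the $x_j$. Your extra check that $x\in X_\rho$ via Proposition~\ref{X_rho-stability} is valid but not needed for the argument.
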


Some algebraic properties of $\tau^i$ will be discussed next:

\begin{proposition}\label{tau^i-algebra}
The topology $\tau^{i}$ on $X_\rho$ is stable under addition and scalar multiplication. Specifically, if $A\in \tau^{i}$, $B\in \tau^{i}$, and $r\in {\mathbb R}$, with $r \neq 0$, then $A+B\in \tau^{i}$ and $rA\in \tau^{i}$ (however, $\tau_i$ is not necessarily a $TVS$ topology.)
\end{proposition}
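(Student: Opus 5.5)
Our plan is to reduce both assertions to statements about a single scale $\tau_\lambda$ and then use Remark~\ref{open-charac-tau^i}, i.e.\ the representation of a $\tau^i$-open set as $\bigcup_{\lambda\in J}A_\lambda$ with $A_\lambda\in\tau_\lambda$.

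\emph{Single-scale facts.} The first fact is translation invariance. Since $\rho_\lambda$ is itself a convex modular on $X$ with $X_{\rho_\lambda}=X_\rho$, Proposition~\ref{basic-properties}(1) applied to $\rho_\lambda$ gives: if $A_\lambda\in\tau_\lambda$ and $x\in X_\rho$, then $A_\lambda+x\in\tau_\lambda$.

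The second fact is the behaviour under dilation. For $r\neq0$ we check directly from the definitions that $rA_\lambda\in\tau_{\lambda/|r|}$. Let $y\in rA_\lambda$, so $y/r\in A_\lambda$, and choose $\varepsilon>0$ with $\{w:\rho(\lambda(w-y/r))<\varepsilon\}\subset A_\lambda$. If $\rho\big(\tfrac{\lambda}{|r|}(z-y)\big)<\varepsilon$, then by property (2) of a modular (which removes the sign of $r$)
\[
\rho\big(\lambda(z/r-y/r)\big)=\rho\big(\tfrac{\lambda}{|r|}(z-y)\big)<\varepsilon .
\]
Hence $z/r\in A_\lambda$, i.e.\ $z\in rA_\lambda$. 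So dilation by $r$ only moves us to another scale, and $\tau_{\lambda/|r|}\subseteq\tau^i$.

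\emph{Scalar multiples and sums.} For $A=\bigcup_{\lambda\in J}A_\lambda\in\tau^i$ we get
\[
rA=\bigcup_{\lambda\in J} rA_\lambda,
\]
a union of sets in $\tau_{\lambda/|r|}\subseteq\tau^i$, so $rA\in\tau^i$. For sums we write
\[
A+B=\bigcup_{b\in B}(A+b)=\bigcup_{b\in B}\bigcup_{\lambda\in J}(A_\lambda+b).
\]
Each $A_\lambda+b$ lies in $\tau_\lambda$ by the translation fact, so $A+B\in\tau^i$. As in Proposition~\ref{basic-properties}(1), only one of $A,B$ actually needs to be open.

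\emph{The parenthetical claim.} To show that $\tau^i$ need not be a TVS topology, we would use a modular failing $\Delta_2$, for instance the $L^{p(\cdot)}$ example with $p(x)=x^{-1}$. There we would find $x_j$ with $\rho(\lambda x_j)\to0$ for all $\lambda$, hence $x_j\to0$ in every $\tau_\lambda$, but with $\|x_j\|_\rho\not\to0$. The hope is that $\tau^i$-convergence of sequences coincides with convergence in every $\tau_\lambda$, i.e.\ with norm convergence, which would be consistent with Theorem~\ref{norminitial}. Failure of continuity of scalar multiplication $(t,x)\mapsto tx$ would then come from $t\to1$ combined with a non-$\Delta_2$ sequence.

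\emph{Expected obstacle.} The parenthetical claim is the delicate step: it requires identifying $\tau^i$-convergent sequences exactly. That is genuinely subtle, because $\tau^i$ is generated by unions across scales rather than by intersections of neighbourhoods, and this is where most of the work would go. The stability assertions themselves are routine once the rescaling identity $rA_\lambda\in\tau_{\lambda/|r|}$ is in place.
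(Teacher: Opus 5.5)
Your proof of the two stability assertions is correct. For $rA$ it is the paper's argument: the rescaling $rA_\lambda\in\tau_{\lambda/|r|}$. The paper states it as $rB_{\rho_\lambda,\delta}(x/r)=B_{\rho_{\lambda/r},\delta}(x)$ for $r>0$ and then uses symmetry of balls; you absorb the sign directly with property (2).

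For $A+B$ your route is slightly different. The paper takes the increasing countable decomposition $A=\bigcup_n A_n$, $B=\bigcup_n B_n$ with $A_n,B_n\in\tau_n$ from Remark~\ref{open-charac-tau^i}. It applies Proposition~\ref{basic-properties}(1) in $\tau_n$ and needs monotonicity to conclude $A+B=\bigcup_n(A_n+B_n)$. You use only translation invariance of each $\tau_\lambda$, through $A+B=\bigcup_{b\in B}\bigcup_{\lambda\in J}(A_\lambda+b)$. This is shorter and shows that openness of one summand suffices.

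The part that does not work is your sketch for the parenthetical claim. Note that the paper states this claim without any proof.

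\emph{The sequence you want cannot exist.} By Proposition~\ref{equivalence}(iii), $\rho(\lambda x_j)\to0$ for every $\lambda>0$ is \emph{equivalent} to $\|x_j\|_\rho\to0$. Failure of $\Delta_2$ gives something else: convergence at one scale without convergence at all scales, e.g.\ $\rho(x_j)\to0$ but $\rho(2x_j)\not\to0$.

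\emph{The identification you hope for already exists, and it defeats your mechanism.} Proposition~\ref{sequential-comparison} together with Proposition~\ref{equivalence}(iii) shows that the $\tau^i$-convergent sequences are exactly the $\|\cdot\|_\rho$-convergent ones. Hence addition and scalar multiplication are \emph{sequentially} continuous for $\tau^i$. Indeed, suppose $t_j\to t$, $\|x_j-x\|_\rho\to0$ and $\|y_j-y\|_\rho\to0$. Then $\|t_jx_j-tx\|_\rho\to0$ and $\|(x_j+y_j)-(x+y)\|_\rho\to0$. Norm convergence implies $\tau^i$-convergence, because $\tau^i\subseteq\tau_{\|\cdot\|_\rho}$ by Theorem~\ref{norminitial}. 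So no argument of the form ``$t\to1$ combined with a non-$\Delta_2$ sequence'', or any other sequence-based argument, can exhibit a failure of the vector-topology axioms.

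A proof of the parenthetical would have to be genuinely non-sequential. It would work with neighbourhoods or nets and exploit that $\tau^i$ need not be first-countable. For example, one could try to produce a $\tau^i$-open $V\ni0$ admitting no $\tau^i$-open $W\ni0$ with $W+W\subseteq V$.

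Even proving $\tau^i\subsetneq\tau_{\|\cdot\|_\rho}$ would not settle the question by itself. A vector topology can be strictly weaker than a norm topology yet have the same convergent sequences; the weak topology of $\ell^1$ is an example.
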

\begin{proof}
Observe first that if $A\in \tau^{i}$ and $B\in \tau^{i}$, then $A+B\in \tau^{i}$. For, according to Remark \ref{open-charac-tau^i}, $A=\bigcup\limits_{n \geq 1}A_n$ and $B=\bigcup\limits_{n \geq 1}B_n$, where $A_n$ and $B_n$ are in $\tau_n$ for all $n \geq 1$. Using Proposition \ref{basic-properties}, we know that $A_n + B_n \in \tau_n$ for all $n \geq 1$. It follows that $A + B = \bigcup\limits_{n \geq 1}(A_n + B_n)$, which proves the desired result.

On the other hand $rA$ is $\tau^{i}$-open for any $r \in \mathbb{R}$, $r \neq 0$, provided that $A$ is $\tau^{i}$-open. To see this, assume $r > 0$. Since
\[
\rho_{\frac{\lambda}{r}}(x-w)= \rho\left(\frac{\lambda}{r} (x - w)\right) = \rho_\lambda \left(\frac{x}{r} - \frac{w}{r}\right),
\]
it follows that $r B_{\rho_\lambda, \delta}\left(\frac{x}{r}\right) = B_{\rho_{\lambda/r}, \delta}(x)$ for modular balls, for any $\delta > 0$. Hence, if $A \in \tau_{\lambda}$ for some $\lambda > 0$, then $rA \in \tau_{\lambda/r}$. The properties of the family $(\tau_\lambda)_{\lambda > 0}$, guarantee that if $A \in \tau_n$ for some $n \geq 1$, then there exists $m \geq 1$ such that $rA \in \tau_m$. Since $\rho$-balls are symmetric, this result extends to any $r \neq 0$. Finally, using the characterization of $\tau^{i}$-open subsets, it is readily seen that $rA$ is in $\tau^i$ for any $r \neq 0$, which completes the proof of Proposition \ref{tau^i-algebra}.
\end{proof}

The next proposition characterizes sequential convergence in the initial
topology $\tau^i$ in terms of the family $(\tau_\lambda)$.

\begin{proposition}\label{sequential-comparison}
For any sequence $(x_j)\subset X_\rho$, the following are equivalent:
\begin{enumerate}
\item[(i)] $x_j \overset{\tau^i}{\longrightarrow} x$;
\item[(ii)] $x_j \overset{\tau_\lambda}{\longrightarrow} x$ for every
$\lambda>0$.
\end{enumerate}
\end{proposition}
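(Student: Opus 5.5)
The plan is to prove the two implications separately, using only two facts. The first is that $\tau^i$ is finer than every $\tau_\lambda$. The second is the structural description of $\tau^i$-open sets in Remark \ref{open-charac-tau^i}, combined with the monotonicity $\tau_{\lambda_1}\subseteq\tau_{\lambda_2}$ for $\lambda_1<\lambda_2$.

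For (i)$\Rightarrow$(ii) I would fix $\lambda>0$ and a $\tau_\lambda$-open set $U\ni x$. By definition of the initial topology, $\tau_\lambda\subseteq\tau^i$, so $U$ is $\tau^i$-open. Hence $x_j\in U$ eventually. Since $U$ was an arbitrary $\tau_\lambda$-neighbourhood of $x$, this gives $x_j\overset{\tau_\lambda}{\longrightarrow}x$.

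For (ii)$\Rightarrow$(i) I would fix a $\tau^i$-open set $\mathcal O$ containing $x$. By Remark \ref{open-charac-tau^i},
\[
\mathcal O=\bigcup_{n\ge1}\mathcal O_n,\qquad \mathcal O_n\in\tau_n .
\]
So there is some $n$ with $x\in\mathcal O_n$. By hypothesis (ii) with $\lambda=n$, the sequence $x_j$ converges to $x$ in $\tau_n$. Therefore $x_j\in\mathcal O_n\subseteq\mathcal O$ for all large $j$, which is $\tau^i$-convergence. Equivalently, by Theorem \ref{convergencecharacterization} applied to the modular $\rho_n$, hypothesis (ii) means $\rho(n(x_j-x))\to0$. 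Since $\mathcal O_n$ is $\tau_n$-open, some ball $B_{\rho_n,\varepsilon}(x)\subseteq\mathcal O_n$, and this ball eventually contains $x_j$.

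The only point needing care is that a general $\tau^i$-open set is a union of \emph{finite intersections} $A_{\lambda_1}\cap\dots\cap A_{\lambda_N}$ with $A_{\lambda_k}\in\tau_{\lambda_k}$. The reduction to a single scale is supplied by Proposition \ref{tau^i-tau^f-characterization}(2): such an intersection lies in $\tau_{\max_k\lambda_k}$, by the inclusions $\tau_{\lambda_k}\subseteq\tau_{\max_k \lambda_k}$. If one prefers to avoid that reduction, one can argue directly on a basic neighbourhood $A_{\lambda_1}\cap\dots\cap A_{\lambda_N}$ of $x$. For each $k$, (ii) gives an index $j_k$ beyond which $x_j\in A_{\lambda_k}$. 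Taking $j\ge\max_k j_k$ then places $x_j$ in the intersection, which again yields (i).
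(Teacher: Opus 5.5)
Your proof is correct and is essentially the paper's. The direction (i)$\Rightarrow$(ii) follows from $\tau_\lambda\subseteq\tau^i$, and for the converse the paper argues exactly as in your last paragraph, on a basic neighbourhood $U_1\cap\cdots\cap U_N$ of $x$ with $U_k\in\tau_{\lambda_k}$. Your main line instead collapses $\mathcal O$ to a single scale, $\mathcal O=\bigcup_n\mathcal O_n$ with $\mathcal O_n\in\tau_n$. This is equally valid, but it relies on the monotonicity of $(\tau_\lambda)$, whereas the finite-intersection argument works for the supremum of any family of topologies.
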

\begin{proof}
If $x_j\xrightarrow{\tau^i}x$, then
$x_j\xrightarrow{\tau_\lambda}x$ for every $\lambda$, because
$\tau_\lambda\subseteq\tau^i$.  Conversely, suppose convergence holds
in every $\tau_\lambda$.  A basic $\tau^i$-neighborhood of $x$ contains
a finite intersection $U_1\cap\cdots\cap U_N$, where
$U_k\in\tau_{\lambda_k}$ and $x\in U_k$.  The sequence is eventually in
each $U_k$, hence eventually in their intersection.  Therefore it
converges to $x$ in $\tau^i$.
\end{proof}

\noindent Proposition \ref{sequential-comparison} implies, in particular, that any \(\tau^i\)-convergent sequence has a unique limit within \(X_\rho\).\\

Next, we compare the Luxemburg norms associated with the modulars
$\rho_\lambda$, $\lambda>0$, and relate their induced topology to the
initial topology $\tau^i$.\\

\begin{proposition}\label{equivalence}
Consider the modular vector space $X_\rho$.
\begin{itemize}
\item[(i)] For every $\alpha>0$ and every $x\in X_\rho$,
\[
\|x\|_{\rho_\alpha}=\alpha\|x\|_\rho.
\]
\item[(ii)] For every $\alpha,\beta>0$, the Luxemburg norms associated
with $\rho_\alpha$ and $\rho_\beta$ are proportional. More precisely,
\[
\|x\|_{\rho_\alpha}
=
\frac{\alpha}{\beta}\|x\|_{\rho_\beta},
\qquad x\in X_\rho.
\]
Consequently, all these norms induce the same topology on $X_\rho$.\\
\item[(iii)] For any sequence $(x_j)\subset X_\rho$ and $x\in X_\rho$,
the following are equivalent:
\[
x_j\xrightarrow{\tau^i}x,
\qquad
x_j\xrightarrow{\rho_\lambda}x
\quad\text{for every }\lambda>0,
\qquad
\|x_j-x\|_\rho\longrightarrow0.
\]
\end{itemize}
\end{proposition}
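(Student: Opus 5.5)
Part (i) is a direct computation from the definition of the Luxemburg norm. For $\alpha>0$ and $x\in X_\rho$,
\[
\|x\|_{\rho_\alpha}=\inf\{\lambda>0:\rho(\alpha\lambda^{-1}x)\le 1\}.
\]
Substituting $\mu=\lambda/\alpha$ turns the condition into $\rho(\mu^{-1}x)\le1$, so the infimum equals $\alpha\inf\{\mu>0:\rho(\mu^{-1}x)\le1\}=\alpha\|x\|_\rho$. Part (ii) then follows by writing $\|x\|_{\rho_\alpha}=\alpha\|x\|_\rho$ and $\|x\|_{\rho_\beta}=\beta\|x\|_\rho$ and dividing. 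Proportional norms are equivalent, so they induce the same topology on $X_\rho$.

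For (iii), the equivalence of the first two conditions is exactly Proposition~\ref{sequential-comparison}, combined with Theorem~\ref{convergencecharacterization} applied to each modular $\rho_\lambda$, since $\tau_\lambda$-convergence is $\rho_\lambda$-convergence. It therefore remains to show that
\[
\rho(\lambda(x_j-x))\to0 \ \text{for every } \lambda>0 \quad\Longleftrightarrow\quad \|x_j-x\|_\rho\to0 .
\]
Put $z_j=x_j-x$.

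For the direction ($\Leftarrow$), fix $\lambda>0$. Since $\|\lambda z_j\|_\rho=\lambda\|z_j\|_\rho\to0$, eventually $\|\lambda z_j\|_\rho\le1$. Proposition~\ref{standard}(iii) then gives $\rho(\lambda z_j)\le\|\lambda z_j\|_\rho\to0$.

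For the direction ($\Rightarrow$), fix $\varepsilon>0$ and take $\lambda=1/\varepsilon$. Eventually $\rho(\varepsilon^{-1}z_j)\le 1$. By the definition of the infimum, or by Proposition~\ref{standard}(ii) applied to $\varepsilon^{-1}z_j$, this gives $\|z_j\|_\rho\le\varepsilon$. Since $\varepsilon$ was arbitrary, $\|z_j\|_\rho\to0$.

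There is no real obstacle here; the only points needing care are the change of variables in (i) and the use of Proposition~\ref{standard}(iii), which requires the normalization $\|\lambda z_j\|_\rho\le1$ before it can be applied. Left-continuity of $\rho$ enters only through Proposition~\ref{standard}, which we take as given.
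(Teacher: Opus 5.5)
Your proof is correct and follows the paper's argument essentially step for step: you use the same rescaling computation for (i), the same appeal to Proposition~\ref{sequential-comparison}, and the same use of Proposition~\ref{standard}(iii) after normalizing to $\|\lambda z_j\|_\rho\le 1$ in (iii). The differences are cosmetic. You prove that $\rho_\lambda$-convergence for all $\lambda$ implies norm convergence directly, by taking $\lambda=1/\varepsilon$, whereas the paper argues by contradiction along a subsequence. You also state explicitly that Theorem~\ref{convergencecharacterization} converts $\tau_\lambda$-convergence into $\rho_\lambda$-convergence, a step the paper leaves implicit.
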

\begin{proof}
For $\alpha>0$ and $x\in X_\rho$, by the definition of the Luxemburg
norm,
\[
\begin{aligned}
\|x\|_{\rho_\alpha}
&=
\inf\left\{\mu>0:
\rho_\alpha\left(\frac{x}{\mu}\right)\leq1\right\}\\
&=
\inf\left\{\mu>0:
\rho\left(\frac{\alpha x}{\mu}\right)\leq1\right\}\\
&= \|\alpha\ x\|_\rho = \alpha\|x\|_\rho.
\end{aligned}
\]
This proves $(i)$, and $(ii)$ follows immediately.\\
For $(iii)$, the equivalence between
$x_j\xrightarrow{\tau^i}x$ and
$x_j\xrightarrow{\rho_\lambda}x$ for every $\lambda>0$
follows from Proposition~\ref{sequential-comparison}. Thus it remains
to prove that
\[
x_j\xrightarrow{\rho_\lambda}x
\quad\text{for every }\lambda>0
\]
if and only if  $\|x_j-x\|_\rho\longrightarrow 0$.  By translation, it is enough to consider the case $x=0$.\\
Suppose first that $\|x_j\|_\rho\to0$. Fix $\lambda>0$. For all
sufficiently large $j$, one has
$\lambda\|x_j\|_\rho\leq1$. By Proposition~\ref{standard},
\[
\rho(\lambda x_j)
\leq
\|\lambda x_j\|_\rho
=
\lambda\|x_j\|_\rho,
\]
and hence
\[
\rho_\lambda(x_j)=\rho(\lambda x_j)\longrightarrow0.
\]
Thus $x_j\xrightarrow{\rho_\lambda}0$ for every $\lambda>0$.\\
Conversely, suppose that
$x_j\xrightarrow{\rho_\lambda}0$ for every $\lambda>0$, but that
$\|x_j\|_\rho$ does not converge to $0$. Passing to a subsequence if
necessary, there exists $\varepsilon_0>0$ such that
\[
\|x_j\|_\rho>\varepsilon_0
\qquad\text{for all }j.
\]
By the definition of the Luxemburg norm,
\[
\rho\left(\frac{x_j}{\varepsilon_0}\right)>1
\qquad\text{for all }j.
\]
Equivalently,
\[
\rho_{1/\varepsilon_0}(x_j)>1
\qquad\text{for all }j,
\]
which contradicts
$x_j\xrightarrow{\rho_{1/\varepsilon_0}}0$.
Therefore $\|x_j\|_\rho\to0$, and the proof is complete.
\end{proof}

\smallskip

\begin{theorem}\label{norminitial}
Let $\tau_*$ be the collection of all subsets $A\subset X_\rho$ with the
following property: for every $x\in A$, there exist $\lambda>0$ and
$\varepsilon>0$, both possibly depending on $x$, such that $B_{\rho_\lambda,\varepsilon}(x)\subseteq A$.  Then the following hold:
\begin{itemize}
\item[$(i)$] $\tau_*$ is a topology on $X_\rho$ and
\[
\tau_\lambda\subseteq\tau_*
\qquad\text{for every }\lambda>0.
\]
Consequently, since $\tau^i$ is the weakest topology stronger than every
$\tau_\lambda$,
\[
\tau^i\subseteq\tau_*.
\]
\item[$(ii)$] For any sequence $(x_k)\subset X_\rho$ and $x\in X_\rho$,
\[
x_k\xrightarrow{\tau_*}x
\quad\Longleftrightarrow\quad
x_k\xrightarrow{\rho_\lambda}x
\quad\text{for every }\lambda>0.
\]
Consequently, $\tau^i$ and $\tau_*$ have the same convergent sequences (but they are not necessarily equal).
\item[$(iii)$] Let $\tau_{\|\cdot\|_\rho}$ denote the topology induced on
$X_\rho$ by the Luxemburg norm. Then
\[
\tau_{\|\cdot\|_\rho}=\tau_*.
\]
Moreover, $\tau_{\|\cdot\|_\rho}$ is the weakest first-countable topology
that is stronger than every $\tau_\lambda$, $\lambda>0$.
\end{itemize}
\end{theorem}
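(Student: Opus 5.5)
The plan is to prove $(iii)$ first, namely the identification $\tau_*=\tau_{\|\cdot\|_\rho}$, by comparing modular balls of $\rho_\lambda$ with Luxemburg balls. Then $(i)$ and $(ii)$ follow almost for free, and the minimality statement follows from a standard first-countability argument combined with Proposition~\ref{equivalence}$(iii)$.

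\emph{Step 1: $\tau_*$ is a topology and $\tau_\lambda\subseteq\tau_*$.}
Closure under arbitrary unions, and the fact that $\emptyset,X_\rho\in\tau_*$, are immediate. For a finite intersection $A_1\cap A_2$ with $x$ in both, suppose $B_{\rho_{\lambda_k},\varepsilon_k}(x)\subseteq A_k$. Put $\lambda=\max\lambda_k$ and $\varepsilon=\min\varepsilon_k$. The convexity inequality $\rho(\lambda_k z)\le(\lambda_k/\lambda)\rho(\lambda z)$ gives $B_{\rho_\lambda,\varepsilon}(x)\subseteq B_{\rho_{\lambda_k},\varepsilon_k}(x)$, so the intersection lies in $\tau_*$. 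The inclusion $\tau_\lambda\subseteq\tau_*$ holds because every $\tau_\lambda$-open set satisfies the defining property of $\tau_*$ with that fixed $\lambda$ (Definition~\ref{tau-open} applied to $\rho_\lambda$). Then $\tau^i\subseteq\tau_*$ by the definition of $\tau^i$.

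\emph{Step 2: $\tau_*=\tau_{\|\cdot\|_\rho}$.}
First take a norm ball $\{y:\|y-x\|_\rho<\varepsilon\}$ and choose $\lambda=2/\varepsilon$. If $\rho(\lambda(y-x))<1$, then Proposition~\ref{standard}$(ii)$ gives $\|\lambda(y-x)\|_\rho\le1$, so $\|y-x\|_\rho\le\varepsilon/2<\varepsilon$. Hence $B_{\rho_\lambda,1}(x)$ lies inside the norm ball, and every norm-open set belongs to $\tau_*$.

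Conversely, given $B_{\rho_\lambda,\varepsilon}(x)$, suppose $\|y-x\|_\rho<r$ with $\lambda r\le\min(1,\varepsilon)$. Proposition~\ref{standard}$(iii)$ then yields $\rho(\lambda(y-x))\le\lambda\|y-x\|_\rho<\varepsilon$. So each $\tau_*$-open set contains a norm ball around each of its points, and therefore $\tau_*\subseteq\tau_{\|\cdot\|_\rho}$.

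\emph{Step 3: statement $(ii)$.}
By Step 2, convergence in $\tau_*$ is norm convergence. Proposition~\ref{equivalence}$(iii)$ identifies norm convergence with $\rho_\lambda$-convergence for every $\lambda$, and also with $\tau^i$-convergence. This gives $(ii)$ and its consequence about $\tau^i$.

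\emph{Step 4: minimality among first-countable topologies.}
The norm topology is first countable, and by Steps 1 and 2 it is stronger than every $\tau_\lambda$. Now let $\tau$ be any first-countable topology with $\tau_\lambda\subseteq\tau$ for all $\lambda$, and let $U$ be norm-open with $x\in U$. Suppose, for contradiction, that $U$ is not a $\tau$-neighborhood of $x$. Take a decreasing countable $\tau$-neighborhood base $(V_n)$ at $x$ and pick $x_n\in V_n\setminus U$. Then $x_n\to x$ in $\tau$, hence in every coarser $\tau_\lambda$. By Theorem~\ref{convergencecharacterization} this means $x_n\xrightarrow{\rho_\lambda}x$ for every $\lambda$, and then by Proposition~\ref{equivalence}$(iii)$ we get $\|x_n-x\|_\rho\to0$. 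So $x_n$ is eventually in $U$, which is a contradiction. Hence $U\in\tau$, and the norm topology is contained in $\tau$.

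This last step is the only place where first countability is essential, and it is also why $\tau^i$ may fail to equal $\tau_*$ even though the two topologies share convergent sequences. The main care needed is to keep track of the two directions of ball containment in Step 2 through the inequalities of Proposition~\ref{standard}.
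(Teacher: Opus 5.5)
Your proof is correct, and for $(i)$ and $(iii)$ it is essentially the paper's proof: the same max/min ball argument, the same two-way comparison of Luxemburg and modular balls via Proposition~\ref{standard}, and the same first-countability minimality argument, which you phrase directly with a neighborhood base. The one difference is that you derive $(ii)$ from $\tau_*=\tau_{\|\cdot\|_\rho}$ together with Proposition~\ref{equivalence}$(iii)$, whereas the paper proves $(ii)$ directly by re-running the subsequence/closed-set argument of Theorem~\ref{convergencecharacterization} for $\rho_{\lambda_0}$; your ordering is shorter and avoids that duplication.
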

\begin{proof}
For $(i)$, it is straightforward to show that $\tau_*$ is a topology. Indeed, if $A \in \tau_*$, $B \in \tau_*$ and $x\in B\cap A$, then there must exist $\lambda_1, \lambda_2, \varepsilon_1, \varepsilon_2 \in (0,\infty)$ such that $B_{\rho_{\lambda_1}, \varepsilon_1}(x) \subseteq A$ and $B_{\rho_{\lambda_2}, \varepsilon_2}(x) \subseteq B$. Because of convexity, $\rho(\lambda_i(x-y))\leq \rho\left(\max\{\lambda_1,\lambda_2\}(x-y)\right)$; this implies the inclusions $B_{\rho_{\max\{\lambda_1,\lambda_2\}}, \min\{\varepsilon_1, \varepsilon_2\}}(x)\subset B_{\rho_{\lambda_1}, \varepsilon_1}(x) \cap B_{\rho_{\lambda_2}, \varepsilon_2}(x) \subseteq A \cap B$. \\
The verification of the fact that $\tau_*$ is closed under arbitrary unions is straightforward.\\
It is evident by definition that for any $\lambda > 0$, $\tau_{\lambda} \subseteq \tau_*$.\\
To prove $(ii)$, assume that $(x_k) \overset{\tau_{\ast}}{\rightarrow} x$. Suppose there exists $\lambda_0 > 0$ such that $\rho_{\lambda_0}(x - x_k) \not\rightarrow 0$ as $k \rightarrow \infty$. Then there exists $\delta > 0$ and a subsequence $(x_{k_j})$ such that $\rho_{\lambda_0}(x - x_{k_j}) \geq \delta$ for all $j \in \mathbb{N}$. Set $S = \{x_{k_j}; j \in \mathbb{N}\}$. Clearly, $x \notin S$.\\
The subsequence $(x_{k_j})$ either contains a $\rho_{\lambda_0}$-convergent subsequence or no subsequence of $(x_{k_j})$ $\lambda_0$-converges. In the first case, select a subsequence, say $(y_i)$, such that $\rho_{\lambda_0}(y_i - y) \rightarrow 0$ as $i \rightarrow \infty$. Set $B = \{y_i; i \in \mathbb{N}\} \cup \{y\}$. Necessarily, $x \notin B$. The set $B$ is $\rho_{\lambda_0}$-closed in $X_{\rho}$ and therefore its complement $X_{\rho} \setminus B$ is $\rho_{\lambda_0}$-open, hence $\tau_{\ast}$-open, and it contains $x$. But $(y_i)$ is a subsequence of $(x_k)$, which $\tau_{\ast}$-converges to $x$. This is clearly a contradiction.\\
Similarly, if no subsequence of $(x_{k_j})$ is $\rho_{\lambda_0}$-convergent, then $S$ is $\rho_{\lambda_0}$-closed, hence $\tau_{\ast}$-closed, and $S$ does not contain $x$. Again, a contradiction is reached by observing that in this case, $X_{\rho} \setminus S$ is a $\tau_{\ast}$-open set containing $x$.\\
Conversely, assume $(x_k) \overset{\rho_{\lambda}}{\rightarrow} x$ for all $\lambda > 0$. Let $V$ be a neighborhood of $0$ in $\tau_{\ast}$. By definition, there exist $\delta > 0$, $\varepsilon > 0$ such that the modular ball $B_{\rho_{\delta}, \varepsilon}(0) \subset V$. Since $\rho_{\delta}(x - x_j) \rightarrow 0$ as $j \rightarrow \infty$ it is immediate that $x - x_j \in B_{\rho_{\delta}, \varepsilon}(0)$ for large enough $j$. Thus, $x_k \overset{\tau_{\ast}}{\rightarrow} x$, as claimed.\\
The inclusion $\tau_{\|\cdot\|_{\rho}} \subseteq \tau_{\ast}$ is tackled next. To this end, let $A \subseteq X_{\rho}$ be $\|\cdot\|_{\rho}$-open and $x \in A$. Then there exists $\varepsilon > 0$ such that $\{y \in X_\rho : \|y - x\|_{\rho} < \varepsilon\} \subset A$. By the definition of the Luxemburg norm, if $\rho_{\frac{2}{\varepsilon}}(x - y) < 1$, then $\|y - x\|_{\rho} < \varepsilon$. Hence, for any $x \in A$, the modular ball $B_{\rho_{2/\varepsilon}, 1}(x) \subset A$. It follows that $A$ is $\tau_{\ast}$-open.\\
On the other hand, let $V$ be a $\tau_{\ast}$-open set. Take $x \in V$. By definition, there exist $\lambda_x > 0$ and $\varepsilon_x < 1$ such that 
$$
\rho(\lambda_x(y - x)) < \varepsilon_x \Rightarrow y \in V.
$$
If $\|\lambda_x(x - y)\|_{\rho} < \varepsilon_x < 1$, then one has, by virtue of Proposition \ref{standard} (iii), that
\begin{align*}
\rho(\lambda_x(y - x)) &= \rho\Big(\|\lambda_x(x - y)\|_{\rho}\|\lambda_x(x - y)\|^{-1}_{\rho}\lambda_x(y - x)\Big) \\
&\leq \|\lambda_x(x - y)\|_{\rho} \rho\Big(\|\lambda_x(x - y)\|^{-1}_{\rho}\lambda_x(x - y)\Big) \\
&< \varepsilon_x.
\end{align*}
It follows that the norm ball $\{y : \|x - y\|_{\rho} < \varepsilon_x \lambda_x^{-1}\}$ is contained in $V$, and thus $V$ is open in $\tau_{\|\cdot\|_{\rho}}$. Hence, $\tau_{\ast} \subseteq \tau_{\|\cdot\|_{\rho}}$, as claimed.\\
Finally, let $\tau$ be a first-countable topology on $X_\rho$ stronger
than every $\tau_\lambda$.  If $x_j\xrightarrow{\tau}x$, then
$x_j\xrightarrow{\tau_\lambda}x$ for every $\lambda$, and Proposition
\ref{equivalence} gives $\|x_j-x\|_\rho\to0$.  Thus the identity map
\[
I:(X_\rho,\tau)\longrightarrow
(X_\rho,\tau_{\|\cdot\|_\rho})
\]
is sequentially continuous.  A sequentially continuous map from a
first-countable space is continuous: if continuity failed at $x$, take a
decreasing countable neighborhood base $(U_n)$ at $x$ and choose
$x_n\in U_n$ whose images avoid a fixed neighborhood of $I(x)$; then
$x_n\to x$, a contradiction.  Hence $I$ is continuous and
$\tau_{\|\cdot\|_\rho}\subseteq\tau$, proving the asserted minimality.

\end{proof}

\begin{remark}
{\normalfont Theorem~\ref{norminitial} asserts equality of convergent
sequences between $\tau^i$ and $\tau_*$, not equality of the two
topologies. The topology $\tau_*$ is the first-countable envelope;
$\tau^i$ need not itself be first-countable or sequential.}
\end{remark}

\section{Compatibility with the algebraic structure}\label{compatibility}

The question arises whether the modular topology is compatible with the algebraic structure of \(X_{\rho}\). The central results of this section are Theorems \ref{Mainequivalence} and \ref{delta2-rcontinuity}, which to the authors' best knowledge are new in the literature.\\
  It will be shown in particular that the validity of the \(\Delta_2\)-condition is equivalent to the compatibility of the modular topology with the linear structure of the underlying vector space; that is, the modular topology is a TVS topology iff the modular satisfies the $\Delta_2$-condition. The results in this section should be compared with Chapter 1, $\S$ 6 in \cite{M:1983}, which deals with a very different question, namely the compatibility of the vector space structure and the topology generated by the modular balls as neighborhoods of the origin, which are {\it not} open in the modular topology considered here. Thus, Theorem \ref{Mainequivalence} answers a question that is not addressed by, nor can be derived from Theorem 6.2 in \cite{M:1983}. \\ The following well known Proposition (see \cite{M:1983}) will be tacitly used in the proof of Theorem \ref{Mainequivalence}. We include it for the sake of clarity.

\begin{proposition}\label{ModCV-Delta2}
If a modular $\rho$ on a vector space satisfies the $\Delta_2$-condition, then for any sequence $(x_j)\subset X$ the following statements are equivalent:
\begin{itemize}
\item [$(i)$] $x_j\overset {\rho}\rightarrow x \in X_{\rho}$\\
\item [$(ii)$] $cx_j\overset {\rho}\rightarrow cx \in X_{\rho}$ for any constant $c$.
\end{itemize}
\end{proposition}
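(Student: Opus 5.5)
The plan is to reduce everything to the case $x=0$ and then bootstrap the $\Delta_2$-condition to arbitrary constants $c$. The implication (ii)$\Rightarrow$(i) is trivial: take $c=1$. For (i)$\Rightarrow$(ii), assume $x_j\xrightarrow{\rho}x$ with $x\in X_\rho$. Since $cx_j-cx=c(x_j-x)$, statement (ii) reads $\rho(c(x_j-x))\to0$. So, putting $z_j:=x_j-x$, it suffices to prove the following: if $\rho(z_j)\to0$, then $\rho(cz_j)\to0$ for every real $c$. The limit $cx$ lies in $X_\rho$ because $X_\rho$ is a vector subspace.

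First dispose of the sign and of small constants. By property (2) of Definition~\ref{modular-vs}, $\rho(-u)=\rho(u)$, so we may take $c\ge0$; the case $c=0$ is trivial since $\rho(0)=0$. For $0<c\le1$, convexity together with $\rho(0)=0$ gives $\rho(cz_j)\le c\,\rho(z_j)\to0$.

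For $c>1$, iterate the $\Delta_2$-condition of Definition~\ref{Delta2}. Applied to $(z_j)$ it yields $\rho(2z_j)\to0$. Applied next to the sequence $(2z_j)$ it yields $\rho(4z_j)\to0$, and by induction $\rho(2^mz_j)\to0$ for every $m\in\mathbb N$. Now choose $m$ with $2^m\ge c$. Convexity then gives
\[
\rho(cz_j)=\rho\Big(\tfrac{c}{2^m}\,2^mz_j\Big)\le \tfrac{c}{2^m}\,\rho(2^mz_j)\longrightarrow0 .
\]
This is the same device used in the proof of Proposition~\ref{scaled-strictness}, and it completes the argument.

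There is no genuine obstacle here. The only point requiring care is to check that the $\Delta_2$-condition, as stated in Definition~\ref{Delta2}, is formulated for arbitrary sequences, so that it may legitimately be re-applied to the rescaled sequence $(2^k z_j)$. It is, so the induction is justified.
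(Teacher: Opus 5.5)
Your proof is correct and follows essentially the same route as the paper: iterate the $\Delta_2$-condition to get $\rho(2^m(x_j-x))\to0$, then dominate $\rho(c(x_j-x))$ by this quantity via convexity. The paper uses the bound $\rho(c(x_j-x))\le\rho(2^M(x_j-x))$ for $|c|<2^M$, whereas you keep the sharper factor $c/2^m$ and also spell out the trivial converse and the sign reduction, which the paper leaves implicit.
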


\begin{proof}\leavevmode
The claim follows by observing that the validity of the $\Delta_2$-condition implies that if $\rho((x_j-x))\rightarrow 0$ as $j\rightarrow \infty$ and $c$ is any constant with $|c|<2^M$ for some $M$ then $\rho(c(x_j-x))\leq \rho(2^M(x_j-x))\rightarrow 0$ as $j\rightarrow \infty$.
\end{proof}

Proposition \ref{ModCV-Delta2} suggests that the $\Delta_2$-condition is the key ingredient ensuring compatibility between the modular topology and the underlying vector space structure. The next theorem confirms this observation by showing that the $\Delta_2$-condition is equivalent to several fundamental topological and analytical properties.

\begin{theorem}\label{Mainequivalence}
Let $\rho$ be a convex modular on a real vector space $X$, and let $\tau_{\rho}$ and $\tau_{\|\cdot\|_{\rho}}$ be the modular topology and the norm topology, respectively. Then, the following conditions are equivalent:
\begin{enumerate}
\item[(i)] $\tau_{\rho}$ is a TVS topology on $X_{\rho}$.
\item[(ii)] $\rho$ satisfies the \(\Delta_2\)-condition.
\item[(iii)] $\|\cdot\|_{\rho}$-convergence is equivalent to $\rho$-convergence.
\item[(iv)] $\tau_{\|\cdot\|_{\rho}} = \tau_{\rho}$.
\item[(v)] $(X_{\rho}, \tau_{\rho})$ is normable.
\end{enumerate}
\end{theorem}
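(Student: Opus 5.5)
I will prove the cycle (ii)$\Rightarrow$(iii)$\Rightarrow$(iv)$\Rightarrow$(v)$\Rightarrow$(i)$\Rightarrow$(ii).

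For (ii)$\Rightarrow$(iii), assume $\Delta_2$ and let $\rho(x_j-x)\to0$. Iterating Definition~\ref{Delta2} gives $\rho(2^M(x_j-x))\to0$ for every $M$. Convexity then gives $\rho(\lambda(x_j-x))\to0$ for every $\lambda>0$, which is the argument of Proposition~\ref{ModCV-Delta2}. Proposition~\ref{equivalence}(iii) now yields $\|x_j-x\|_\rho\to0$. The converse implication, norm convergence $\Rightarrow$ $\rho$-convergence, holds in general by Proposition~\ref{equivalence}(iii) with $\lambda=1$.

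For (iii)$\Rightarrow$(iv), the norm topology is first countable, hence sequential, so it is determined by its convergent sequences. The topology $\tau_\rho$ is sequential by construction: its closed sets are exactly the sequentially $\rho$-closed sets (Proposition~\ref{rho-topology}), and its convergent sequences are the $\rho$-convergent ones (Theorem~\ref{convergencecharacterization}). Two sequential topologies with the same convergent sequences have the same closed sets, since in both cases a set is closed iff it is sequentially closed. Hence they coincide.

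The implication (iv)$\Rightarrow$(v) is trivial. For (v)$\Rightarrow$(i), any normable topology is a TVS topology; here I only need that the norm is some norm on $X_\rho$ inducing $\tau_\rho$.

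The main step is (i)$\Rightarrow$(ii). Assume $\tau_\rho$ is a TVS topology, so multiplication by $2$ is continuous. Continuous maps preserve convergent sequences. If $x_j\xrightarrow{\rho}0$, then $x_j\to0$ in $\tau_\rho$ by Theorem~\ref{convergencecharacterization}, so $2x_j\to0$ in $\tau_\rho$, and again by Theorem~\ref{convergencecharacterization} this gives $\rho(2x_j)\to0$, which is exactly $\Delta_2$.

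One subtlety is that Definition~\ref{Delta2} is stated for sequences in $X$, whereas the TVS hypothesis concerns $X_\rho$. A sequence with $\rho(x_j)\to0$ eventually lies in $X_\rho$: if $\rho(x_j)<\infty$, then $x_j\in X_\rho$ with $\lambda=1$. So nothing is lost by discarding finitely many terms.

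I expect the only delicate point to be the sequential-topology argument in (iii)$\Rightarrow$(iv). There I must be sure that $\tau_\rho$-closedness is equivalent to sequential $\rho$-closedness on $X_\rho$, which is Proposition~\ref{rho-topology} together with Corollary~\ref{technicalcorollary}. Every other step is a direct use of earlier results.
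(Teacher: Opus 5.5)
Your proof is correct and follows the same cycle (i)$\Rightarrow$(ii)$\Rightarrow$(iii)$\Rightarrow$(iv)$\Rightarrow$(v)$\Rightarrow$(i) as the paper, with the same key inputs (Theorem~\ref{convergencecharacterization}, Propositions~\ref{ModCV-Delta2} and~\ref{equivalence}). The differences are cosmetic. In (i)$\Rightarrow$(ii) you use continuity of the doubling map $x\mapsto 2x$ directly, whereas the paper extracts it from continuity of addition via $2B_{\rho,r}(0)\subset\mathcal{M}+\mathcal{M}\subset\mathcal{N}$. In (iii)$\Rightarrow$(iv) you compare the two topologies as sequential spaces, whereas the paper proves $\tau_\rho\subseteq\tau_{\|\cdot\|_\rho}$ by hand via Proposition~\ref{standard}(iii), an inclusion that holds even without (iii), and obtains the reverse inclusion from sequential closedness.
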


\begin{proof}\leavevmode
\begin{enumerate}
\item[] $(i) \Rightarrow (ii)$. Assume $(i)$. Fix a modular neighborhood of $0$, say $\mathcal{N}$, and let $\mathcal{M}$ be another neighborhood of $0$ such that $\mathcal{M} + \mathcal{M} \subset \mathcal{N}$; this is possible since it is assumed that the algebraic operations on $X$ are continuous with respect to $\tau_{\rho}$. For the same reason, there must exist $r > 0$ such that $B_{\rho, r}(0) \subseteq \mathcal{M}$, and thus
\[
2B_{\rho, r}(0) \subset B_{\rho, r}(0) + B_{\rho, r}(0) \subset \mathcal{N}.
\]
Pick an arbitrary sequence $(x_j)$ such that $(x_j) \overset{\rho}{\rightarrow} 0$. Then, for some $n_0 \geq 1$, we have $x_j \in B_{\rho, r}(0)$ for $j \geq n_0$, which guarantees $2x_j \in \mathcal{N}$ for $j \geq n_0$. On account of the arbitrariness of $\mathcal{N}$, it follows that $(2x_j)\overset{\tau_{\rho}}{\rightarrow} 0$. By Theorem \ref{convergencecharacterization}, $(2x_j)\overset{\rho}{\rightarrow} 0$. Hence, $\rho$ satisfies the \(\Delta_2\)-condition, as claimed.

\item[] $(ii) \Rightarrow (iii)$.
Proposition~\ref{ModCV-Delta2} shows that $\rho(\lambda(x_j-x))\to0$ for every $\lambda>0$. Proposition~\ref{equivalence} then gives $\|x_j-x\|_\rho\to0$. Conversely, norm convergence always implies modular convergence by Proposition~\ref{standard} $(iii)$.

\item[] $(iii) \Rightarrow (iv)$. Every $\tau_\rho$-open set is norm-open: if $B_{\rho,\varepsilon}(x)\subset A$, then the norm ball of radius $\min\{\varepsilon,1/2\}$ about $x$ lies in $A$ by Proposition~\ref{standard} $(iii)$. Thus $\tau_\rho\subseteq\tau_{\|\cdot\|_\rho}$.  Conversely, if $A$ is norm-closed and $(x_j)\subset A$ satisfies $x_j\xrightarrow{\rho}x$, condition $(iii)$ gives norm convergence and hence $x\in A$.  Proposition~\ref{rho-topology} shows that $A$ is $\tau_\rho$-closed, proving the reverse inclusion.

\item[] $(iv) \Rightarrow (v)$. This is immediate.

\item[] $(v) \Rightarrow (i)$. If $(v)$ holds, then $\tau_{\rho}$ is the topology generated by a norm $\|\cdot\|$, which is a TVS topology on $X_{\rho}$.
\end{enumerate}
Thus, the proof of Theorem \ref{Mainequivalence} is complete.
\end{proof}

\smallskip

The following observation, whose proof is a straightforward application of the Heine-Borel property, highlights that the phenomena considered in this
paper are inherently infinite-dimensional.

\begin{proposition}\label{equivalence-finte-dim}
Let $\rho:X\to[0,\infty]$ be a convex modular on a finite-dimensional
vector space $X$. Then $\rho$ satisfies the $\Delta_2$-condition.
Consequently, the modular topology and the Luxemburg norm topology
coincide on $X_{\rho}$.
\end{proposition}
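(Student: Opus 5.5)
We argue directly from the definition of the $\Delta_2$-condition (Definition~\ref{Delta2}), using only convexity and compactness in finite dimensions. Let $(x_j)\subset X$ with $\rho(x_j)\to0$. Then $x_j\in X_\rho$ for all large $j$, and $X_\rho$ is a finite-dimensional subspace. We fix a basis $e_1,\dots,e_n$ of $X_\rho$ and equip $X_\rho$ with the Euclidean norm $|\cdot|$ relative to this basis. The argument has two halves: an upper bound for $\rho$ near $0$, which makes $\rho$ continuous at the origin, and a lower bound, which says that $\rho(x_j)\to0$ forces $|x_j|\to0$.

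\emph{Step 1 (upper bound).} Since each $e_i\in X_\rho$, there is $\lambda>0$ with $\rho(\pm\lambda e_i)=\rho(\lambda e_i)<\infty$ for all $i$, using symmetry (2) of Definition~\ref{modular-vs}. Put $M:=\max_i\rho(\lambda e_i)$. By convexity, $\rho\le M$ on the convex hull $K$ of $\{\pm\lambda e_i\}$. This $K$ is the $\ell^1$-ball of radius $\lambda$, so it contains a Euclidean ball $\{|x|\le r\}$ with $r=\lambda/\sqrt n$. For $|x|\le r$, $x\neq 0$, write $x=t\,(r x/|x|)$ with $t=|x|/r\in(0,1]$. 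Convexity together with $\rho(0)=0$ gives $\rho(tz)\le t\rho(z)$, hence
\[
\rho(x)\le \frac{M}{r}\,|x|\qquad (|x|\le r).
\]

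\emph{Step 2 (lower bound).} We claim $|x_j|\to0$. Suppose not. Then after passing to a subsequence $|x_j|\ge\delta$ for some $\delta\in(0,r]$. Set $y_j=\delta x_j/|x_j|$. Since $\delta/|x_j|\le1$, we get $\rho(y_j)\le\rho(x_j)\to0$. By compactness of the Euclidean sphere of radius $\delta$, a further subsequence satisfies $y_j\to y$ with $|y|=\delta$. Convexity gives
\[
\rho\!\left(\tfrac{y}{2}\right)\le\tfrac12\rho(y_j)+\tfrac12\rho(y-y_j),
\]
and $\rho(y-y_j)\le \tfrac{M}{r}|y-y_j|\to0$ by Step 1. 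Hence $\rho(y/2)=0$, so $y=0$, which contradicts $|y|=\delta$.

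\emph{Conclusion.} By Step 2, $|2x_j|\le r$ eventually, and Step 1 then gives $\rho(2x_j)\le \tfrac{2M}{r}|x_j|\to0$. This is exactly the $\Delta_2$-condition. The final assertion of the proposition then follows from Theorem~\ref{Mainequivalence} $(ii)\Rightarrow(iv)$.

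The only delicate point is that $\rho$ may take the value $\infty$ and is not assumed continuous, so the usual Heine--Borel argument cannot be applied to $\rho$ directly. I would handle this as above: Step 1 supplies the continuity of $\rho$ at $0$ that is needed, and that estimate is then used in the convexity trick of Step 2 to pass to the limit.
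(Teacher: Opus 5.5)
Your proof is correct. It is in substance the Heine--Borel argument the paper alludes to: compactness in the finite-dimensional space $X_\rho$, plus the convexity estimate behind uniqueness of $\rho$-limits (Lemma~\ref{lconvergence}(ii)).

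The one real difference is that you use an auxiliary Euclidean norm. That forces you to prove by hand the bound $\rho(x)\le (M/r)|x|$ near $0$, which is your Step~1.

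If you apply Heine--Borel to the Luxemburg norm instead, Step~1 becomes unnecessary:
\begin{itemize}
\item $\rho(x_j)\to0$ puts $x_j$ eventually in the closed unit $\|\cdot\|_\rho$-ball, so some subsequence converges in norm to some $x$.
\item That subsequence then converges modularly to $x$ by Proposition~\ref{standard}(iii).
\item Uniqueness of $\rho$-limits forces $x=0$, so $\|x_j\|_\rho\to0$.
\item Hence $\rho(2x_j)\le 2\|x_j\|_\rho\to0$.
\end{itemize}
So your remark that Heine--Borel cannot be applied directly is overcautious.

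What your version buys is self-containment. It uses neither the Luxemburg norm nor left-continuity. It also shows directly that $\rho$-convergence coincides with Euclidean convergence on $X_\rho$, so you could identify the topologies without invoking Theorem~\ref{Mainequivalence}.
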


\smallskip

\begin{theorem}\label{equivalence-compact}
Let $\rho:X\rightarrow [0,\infty]$ be a convex modular on a  vector space $X$ and $K\subset X_\rho$ be compact relative to the $\|\cdot\|_{\rho}$-topology. Then any sequence $(x_j)\subseteq K$ is $\rho$-convergent if and only if it is $\|\cdot\|_{\rho}$-convergent.
\end{theorem}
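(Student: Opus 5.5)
The plan is to handle the two directions separately. The implication from norm convergence to $\rho$-convergence requires no compactness and comes directly from Proposition~\ref{standard}. The converse combines the compactness of $K$ with the uniqueness of $\rho$-limits, via a subsequence argument.

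\emph{Norm convergence implies $\rho$-convergence.} Suppose $\|x_j-x\|_\rho\to0$. For all sufficiently large $j$ we have $\|x_j-x\|_\rho\le1$, and Proposition~\ref{standard}~$(iii)$ then gives $\rho(x_j-x)\le\|x_j-x\|_\rho\to0$. Hence $x_j\xrightarrow{\rho}x$.

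\emph{$\rho$-convergence implies norm convergence.} Let $(x_j)\subseteq K$ with $x_j\xrightarrow{\rho}x$ for some $x\in X$.
\begin{enumerate}
\item Since $\tau_{\|\cdot\|_\rho}$ is a metric topology, compactness of $K$ is equivalent to sequential compactness.
\item Take an arbitrary subsequence $(x_{j_k})$. By sequential compactness it has a further subsequence $(x_{j_{k_l}})$ converging in norm to some $y\in K$.
\item By the first direction, $x_{j_{k_l}}\xrightarrow{\rho}y$.
\item By Lemma~\ref{lconvergence}~$(iii)$, the same subsequence also $\rho$-converges to $x$. Uniqueness of $\rho$-limits (Lemma~\ref{lconvergence}~$(ii)$) then forces $y=x$. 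In particular $x\in K\subseteq X_\rho$.
\item We have shown that every subsequence of $(x_j)$ has a further subsequence converging in norm to the same point $x$. The standard metric-space argument now gives $\|x_j-x\|_\rho\to0$ for the whole sequence. Indeed, otherwise some $\varepsilon>0$ and some subsequence would satisfy $\|x_{j_k}-x\|_\rho\ge\varepsilon$ for all $k$, and no further subsequence of it could converge in norm to $x$.
\end{enumerate}

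The only point needing care is that the $\rho$-limit $x$ is not assumed a priori to lie in $K$, or even in $X_\rho$. This is resolved in step 4, where the limit is identified with the norm limit $y\in K$ of a sub-subsequence. No other obstacle is expected: compactness supplies norm-convergent subsequences, and uniqueness of the $\rho$-limit pins down their common limit.
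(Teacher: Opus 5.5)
Your proof is correct, and it follows the same route as the paper, which says only that the result is ``a standard application of compactness'': norm convergence gives $\rho$-convergence by Proposition~\ref{standard}~$(iii)$, and the converse uses norm-convergent sub-subsequences together with uniqueness of $\rho$-limits from Lemma~\ref{lconvergence}. Your step identifying the $\rho$-limit $x$ with the norm limit $y\in K$ supplies the one detail the paper leaves implicit.
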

\begin{proof} The proof is a standard application of compactness.
\end{proof}

Perhaps the most significant shortcoming of the topology $\tau_{\rho}$ is the counterintuitive fact that modular balls are not necessarily open. The following theorem sheds some light on this issue.

\begin{theorem}\label{delta2-rcontinuity}
Let $\rho$ be a convex, left-continuous modular on a vector space $X$ and consider the following statements:
\begin{enumerate}
\item[(i)] each open $\rho$-ball is $\rho$-open;
\item[(ii)] $\rho$ is right-continuous;
\item[(iii)] each open $\rho$-ball is norm-open.
\end{enumerate}
Then $(i)\Rightarrow (ii)\Rightarrow (iii)$.
In addition, if $\rho$ satisfies the \(\Delta_2\)-condition, then $(iii)\Rightarrow (i)$.
\end{theorem}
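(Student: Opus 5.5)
The plan is to prove each implication directly from convexity and monotonicity of $\lambda\mapsto\rho(\lambda u)$. Only the last one uses the $\Delta_2$-condition, and there I will go through Theorem~\ref{Mainequivalence}. Throughout I use that, for fixed $u$, the map $\lambda\mapsto\rho(\lambda u)$ is nondecreasing on $[0,\infty)$. This holds because $\rho(0)=0$ and convexity give $\rho(su)\le \frac{s}{t}\rho(tu)\le \rho(tu)$ for $0<s<t$.

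\emph{$(i)\Rightarrow(ii)$.} I argue by contraposition. Fix $u\in X$ and $\lambda_0>0$, and put $a=\rho(\lambda_0u)$. If $a=\infty$, monotonicity forces $\rho(\lambda u)=\infty$ for all $\lambda>\lambda_0$, so right-continuity at $\lambda_0$ is automatic. So assume $a<\infty$ and suppose the right limit satisfies $L:=\lim_{\lambda\to\lambda_0^+}\rho(\lambda u)>a$ (possibly $L=\infty$).
\begin{itemize}
\item Choose $r$ with $a<r<L$, and consider the ball $B_{\rho,r}(0)$. It contains $\lambda_0u$.
\item For any $\lambda_n\downarrow\lambda_0$, monotonicity gives $\rho(\lambda_nu)\ge L>r$, so no $\lambda_nu$ lies in the ball.
\item Since $\rho(\lambda_0u)<\infty$, we have $u\in X_\rho$, hence $\rho(tu)\to0$ as $t\to0$. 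Therefore $\rho(\lambda_nu-\lambda_0u)=\rho((\lambda_n-\lambda_0)u)\to0$, i.e.\ $\lambda_nu\xrightarrow{\rho}\lambda_0u$.
\end{itemize}
If $B_{\rho,r}(0)$ were $\tau_\rho$-open, Theorem~\ref{convergencecharacterization} would force $\lambda_nu$ into it eventually. That is a contradiction.

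\emph{$(ii)\Rightarrow(iii)$.} Fix the ball $B_{\rho,r}(a)$ and a point $y$ in it, so $s:=\rho(y-a)<r$. By right-continuity of $\theta\mapsto\rho(\theta(y-a))$ at $\theta=1$, I pick $\theta>1$ with $\rho(\theta(y-a))<r$. For $z\in X_\rho$, set $w=\frac{\theta}{\theta-1}(z-y)$, so that
\[
z-a=\tfrac1\theta\,\theta(y-a)+\bigl(1-\tfrac1\theta\bigr)w .
\]
Convexity then gives $\rho(z-a)\le \frac1\theta\rho(\theta(y-a))+(1-\frac1\theta)\rho(w)$. By Proposition~\ref{standard}(iii), $\rho(w)\le\|w\|_\rho$ whenever $\|w\|_\rho\le1$. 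Hence, for $\|z-y\|_\rho$ small enough, the right-hand side is $<r$. This puts a norm ball around $y$ inside $B_{\rho,r}(a)$, so the modular ball is norm-open.

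\emph{$(iii)\Rightarrow(i)$ under $\Delta_2$.} By Theorem~\ref{Mainequivalence}, $(ii)\Rightarrow(iv)$, the $\Delta_2$-condition gives $\tau_{\|\cdot\|_\rho}=\tau_\rho$. Hence a norm-open ball is $\tau_\rho$-open.

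The only delicate step is the first implication. There one must handle $a=\infty$ separately and check that $u\in X_\rho$, which is what makes $\lambda_nu\xrightarrow{\rho}\lambda_0u$ hold; I expect no further obstacles.
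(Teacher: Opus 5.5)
Your proof is correct. For $(i)\Rightarrow(ii)$ and for $(iii)\Rightarrow(i)$ under $\Delta_2$ you follow the paper's argument. In the first, a failure of right-continuity yields points $\lambda_n u$ outside $B_{\rho,r}(0)$ that $\rho$-converge to $\lambda_0 u$ inside it. In the second, you simply invoke $\tau_{\|\cdot\|_\rho}=\tau_\rho$ from Theorem~\ref{Mainequivalence}. Your separate treatment of the case $\rho(\lambda_0u)=\infty$, and your check that $u\in X_\rho$, only make explicit what the paper leaves implicit.

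The genuine difference is in $(ii)\Rightarrow(iii)$. Both arguments use right-continuity the same way: to push a point of the ball outward by a factor $\theta>1$ while staying inside it. They differ in how this becomes norm-openness.

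The paper proves the identity $B_{\rho,r}(0)=\{x\in X_\rho:\mu_{B_r}(x)<1\}$. Right-continuity gives $\mu_{B_r}(x)<1$ when $\rho(x)<r$, and convexity gives the converse. The paper then concludes from the equivalence of the gauge $\mu_{B_r}$ with $\|\cdot\|_\rho$, using translation invariance of the norm topology (tacitly) for other centers.

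You instead give a local convex-interior argument. You write $z-a$ as a convex combination of $\theta(y-a)$ and $w=\frac{\theta}{\theta-1}(z-y)$. The small perturbation is then absorbed through $\rho(w)\le\|w\|_\rho$ from Proposition~\ref{standard}(iii).

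Your route is more self-contained. It treats arbitrary centers directly and never needs the gauges $\mu_{B_r}$ or the comparison $\mu_{B_{r_2}}\le\mu_{B_{r_1}}\le\frac{r_2}{r_1}\mu_{B_{r_2}}$. The paper's route yields a sharper structural fact: under right-continuity, each open modular ball about $0$ is exactly the open unit ball of a norm equivalent to the Luxemburg norm.
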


\begin{proof}\leavevmode
\begin{enumerate}
\item[(1)]  $(i) \Rightarrow (ii)$. Assume $(i)$. If $(ii)$ fails, let $x_0\in X_\rho$ be such that for some sequence $(\lambda_j)\searrow 1 $ and $\delta > 0$, we have $\rho(\lambda_j x_0)\geq \rho(x_0)+\delta$ for any $j \in \mathbb{N}$. Consider the open modular ball $B=\{z\in X_\rho:\ \rho(z)<\rho(x_0)+\frac{\delta}{2}\}$. Since $(\lambda_j x_0)\overset{\rho}{\rightarrow}x_0$ and $x_0 \in B$, it is easy to see that $B$ does not contain any $\rho$-ball centered at $x_0$. Hence $B$ is not $\rho$-open, contradicting $(i)$.
\item[(2)]  $(ii) \Rightarrow (iii)$. Assume $(ii)$ holds. First, establish that for $r>0$ 
\begin{equation}\label{1tor}
B_{\rho,r}(0)=\{x \in X_\rho:\ \rho(x)<r\}=\{x \in X_\rho:\ \mu_{B_r}(x)<1\},
\end{equation}
where $\mu_{B_r}$ is the Minkowski functional associated with $B_r(0)$. If $\rho(x)<r$, then by right-continuity, there exists $\lambda>1$ such that $\rho(\lambda x)<r$, i.e., $\mu_{B_r}(x)\leq \lambda^{-1}<1$. Conversely, if 
$$\mu_{B_r}(x)=\inf\{\gamma>0: \rho\left(\gamma^{-1}x\right)\leq r \}<1,$$
then for some $\gamma \in (0,1)$, one has $\rho (\gamma^{-1}x)\leq r$, yielding
$$\rho(x)=\rho(\gamma^{-1}\gamma x)\leq \gamma \rho(\gamma^{-1}x)\leq \gamma r<r.$$
Thus, (\ref{1tor}) holds. Since the functional $\mu_{B_r}$ is a norm and it is equivalent to the Luxemburg norm, the right-hand side in (\ref{1tor}) is $\|\cdot\|_{\rho}$-open, and $(iii)$ follows.
\end{enumerate}
Finally, if the $\Delta_2$-condition holds, then the norm topology and the $\rho$-topology coincide. If that is the case, it is clear that $(iii)\Rightarrow (i)$.
\end{proof}
\begin{remark}{\normalfont In the light of the preceding theorem, if the modular $\rho$ fails to be right-continuous, modular balls $B_{\rho, \delta}(x)=\{y:\rho(x-y)<\delta\}$ are not expected to be open. In particular, modular balls cannot be expected to be open in the variable exponent spaces $\ell^{(p_n)}, L^{p(\cdot)}$ when the exponent is unbounded. More strongly yet, Corollary \ref{emptyinterior} in the next section will establish not only that modular balls fail to be modularly open in this case, but that they have empty interior.  }

\end{remark}
A straightforward consequence of Theorem \ref{convergencecharacterization} is that the continuous functions with respect to the modular topology are precisely those that are sequentially continuous.\\
The next result follows from Corollary \ref{l*convergence} and \cite[Theorem 2.2]{dudley}; we opted for an ad-hoc proof based on the modular structure of $X$, for the sake of keeping the exposition self contained.

\begin{theorem}\label{continuitysequential}
Let $(X,\rho_X)$ and $(Y,\rho_Y)$ be modular spaces with modular topologies $\tau_{\rho_X}$ and $\tau_{\rho_Y}$, respectively. Then
$$f:(X_{\rho_X},\tau_{\rho_X})\rightarrow (Y_{\rho_Y},\tau_{\rho_Y})$$
is continuous if and only if it is sequentially continuous.
\end{theorem}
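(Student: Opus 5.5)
Continuity always implies sequential continuity, and this direction does not use the modular structure. Suppose $f$ is continuous and $x_n\xrightarrow{\rho_X}x$. By Theorem~\ref{convergencecharacterization}, $x_n\xrightarrow{\tau_{\rho_X}}x$. Take any $\tau_{\rho_Y}$-open $V\ni f(x)$. Then $f^{-1}(V)$ is a $\tau_{\rho_X}$-open neighbourhood of $x$, so $x_n\in f^{-1}(V)$ eventually. Hence $f(x_n)\xrightarrow{\tau_{\rho_Y}}f(x)$, and by Theorem~\ref{convergencecharacterization} again this is $\rho_Y$-convergence.

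For the converse, assume $f$ is sequentially continuous, meaning $x_n\xrightarrow{\rho_X}x$ implies $f(x_n)\xrightarrow{\rho_Y}f(x)$. We work with closed sets and use the sequential description of closedness from Proposition~\ref{rho-topology} on both sides. Fix a $\tau_{\rho_Y}$-closed set $C\subseteq Y_{\rho_Y}$ and put $D=f^{-1}(C)\subseteq X_{\rho_X}$. To show $D$ is $\tau_{\rho_X}$-closed, it suffices to check that whenever $(x_n)\subseteq D$ and $x_n\xrightarrow{\rho_X}x$, we have $x\in D$. Proposition~\ref{X_rho-stability} and the corollary following it guarantee $x\in X_{\rho_X}$, so $f(x)$ is defined.

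Sequential continuity gives $f(x_n)\xrightarrow{\rho_Y}f(x)$ with $f(x_n)\in C$. Since $C$ is closed, Proposition~\ref{rho-topology} (its condition (1)) yields $f(x)\in C$, so $x\in D$. Therefore preimages of closed sets are closed, and $f$ is continuous.

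The only delicate point is the bookkeeping between the ambient space $X$ and the subspace $X_{\rho_X}$. Proposition~\ref{rho-topology} is stated for subsets of $X$, while $f$ lives on $X_{\rho_X}$. This is settled by Corollary~\ref{technicalcorollary}: open and closed sets in the subspace topology are exactly the $\tau_\rho$-open and closed subsets of $X_\rho$. It is also settled by the $\tau_\rho$-closedness of $X_\rho$, which ensures that a relatively closed set $D\subseteq X_{\rho_X}$ is closed in $X$ and that limits of sequences in $X_{\rho_X}$ remain in $X_{\rho_X}$. Once this identification is made, the argument is the routine one for sequential spaces.
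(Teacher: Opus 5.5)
Your proposal is correct and follows the paper's own argument. As in the paper, you show that $f^{-1}(C)$ is sequentially closed for every $\tau_{\rho_Y}$-closed $C$, using Theorem~\ref{convergencecharacterization} and sequential continuity, and then conclude from Proposition~\ref{rho-topology} that it is closed. Your extra bookkeeping, namely that $X_{\rho_X}$ is $\tau_{\rho_X}$-closed so the sequential description of closedness applies inside the subspace, is a correct detail that the paper leaves implicit.
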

\begin{proof}
Continuity always implies sequential continuity. Conversely, suppose
$f$ is sequentially continuous and let $C\subset Y_{\rho_Y}$ be closed.
If $(x_j)\subset f^{-1}(C)$ and $x_j\xrightarrow{\tau_{\rho_X}}x$,
then Theorem~\ref{convergencecharacterization} and sequential continuity
give $f(x_j)\xrightarrow{\tau_{\rho_Y}}f(x)$. Hence $f(x)\in C$.
Thus $f^{-1}(C)$ is sequentially closed. By the definition of the
modular topology (Proposition~\ref{rho-topology}), it is closed, proving
that $f$ is continuous.
\end{proof}

\section{The modular topology of $\ell^{(p_n)}$ and $L^{p(\cdot)}$}\label{lpn}
The abstract theory developed in the previous sections becomes particularly transparent in variable exponent spaces, where the failure of the $\Delta_2$-condition gives rise to genuinely new topological phenomena.

In this section the modular topologies inherent to the variable exponent sequence spaces and Lebesgue spaces (Definitions \ref{deflp} and \ref{Lp}) are studied in detail in the light of the results in Sections \ref{modulartopologies}, \ref{lambda} and \ref{compatibility}. We focus exclusively on the class of unbounded exponents, since in the bounded case the modular topology coincides with the norm topology, which has been widely studied in the literature, see \cite{DHHR, KR, ML} and the references therein.\\
In particular, we prove that the boundedness of the exponent $p$ is equivalent to the right-continuity of the modular $\rho_p$ and the openness of the modular balls in the modular topology (Theorems \ref{r-continuous-p^+}, \ref{r-continuous-p^+L} and their Corollaries).
Furthermore, we show that in the unbounded setting, modular balls have empty interior and modularly open sets are unbounded and that the modular topology is not first-countable (hence, not second countable).
Finally, we prove in Theorem \ref{separability} that, in contrast to the norm topologies, the modular topologies of $\ell^{(p_n)}$ and $L^{p(\cdot)}$ are separable, even for unbounded $p$. 

\begin{theorem}\label{r-continuous-p^+}
Let \( \mathbf{p}: = (p_j) \subset (1,\infty) \) be a sequence and  \(\rho_{\mathbf{p}}: \ell^{(p_j)} \to [0,\infty]\) be as in Definition \ref{deflp}.
Then the following are equivalent:
\begin{enumerate}
\item[(i)] $\rho_{\mathbf{p}}$ is right-continuous on $\ell^{(p_j)}$,
\item[(ii)] $p^+:=\sup\limits_{j\in {\mathbb N}}\ p_j<\infty$,
\item[(iii)] $\rho_{\mathbf{p}}$ satisfies the \(\Delta_2\)-condition.
\end{enumerate}
\end{theorem}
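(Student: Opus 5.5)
I will prove the cycle $(ii)\Rightarrow(iii)$, $(iii)\Rightarrow(i)$ and $(i)\Rightarrow(ii)$, arguing by contrapositive in the last step.

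For $(ii)\Rightarrow(iii)$, suppose $p^+<\infty$. Then for every $a=(a_j)$,
\[
\rho_{\mathbf p}(2a)=\sum_j 2^{p_j}|a_j|^{p_j}\le 2^{p^+}\rho_{\mathbf p}(a).
\]
Hence $\rho_{\mathbf p}(x_k)\to0$ implies $\rho_{\mathbf p}(2x_k)\to0$, which is the $\Delta_2$-condition of Definition~\ref{Delta2}.

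For $(iii)\Rightarrow(i)$, I would not go through Theorem~\ref{delta2-rcontinuity}, since it only yields implications from $(i)$. I would argue directly. Fix $x\in\ell^{(p_j)}$ with $\rho_{\mathbf p}(x)<\infty$. I would first show that $\rho_{\mathbf p}(\lambda x)<\infty$ for some $\lambda>1$. If not, then $\rho_{\mathbf p}(\lambda x)=\infty$ for all $\lambda>1$. In that case I can pick disjoint finite blocks $F_k$ of indices, with tails going to infinity, such that
\[
\sum_{j\in F_k}|x_j|^{p_j}\to0
\quad\text{and}\quad
\sum_{j\in F_k}|2x_j|^{p_j}\ge 1 .
\]
Such blocks exist because $\sum_j|x_j|^{p_j}<\infty$ makes the tails of the first sum small, while $\sum_j|2x_j|^{p_j}=\infty$ lets each block reach $1$. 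Let $y_k=x\mathbf 1_{F_k}$. Then $y_k\xrightarrow{\rho}0$ but $2y_k\not\xrightarrow{\rho}0$, contradicting $\Delta_2$. Once $\rho_{\mathbf p}(\lambda_0x)<\infty$ for some $\lambda_0>1$, right-continuity at $x$ follows from dominated convergence on the series, using the summable majorant $|\lambda_0x_j|^{p_j}$ for $1\le\lambda\le\lambda_0$. If instead $\rho_{\mathbf p}(x)=\infty$, right-continuity is trivial, since $\rho_{\mathbf p}(\lambda x)\ge\rho_{\mathbf p}(x)$ for $\lambda\ge1$.

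For $(i)\Rightarrow(ii)$, suppose $p^+=\infty$. I would pass to a subsequence of indices $j_k$ with $p_{j_k}\ge 2^k$, or with $p_{j_k}$ growing fast enough to make what follows work. The plan is to build $x$ supported on $\{j_k\}$ with $x_{j_k}=c_k$ chosen so that $\sum_k c_k^{p_{j_k}}<\infty$ while $\sum_k(\lambda c_k)^{p_{j_k}}=\infty$ for every $\lambda>1$. A natural first choice is $c_k^{p_{j_k}}=1/k^2$, that is $c_k=k^{-2/p_{j_k}}$. Then $(\lambda c_k)^{p_{j_k}}=\lambda^{p_{j_k}}k^{-2}$, which tends to $\infty$ once $p_{j_k}\ge k$, because $\lambda^{k}/k^2\to\infty$. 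So $\rho_{\mathbf p}(x)<\infty$ but $\rho_{\mathbf p}(\lambda x)=\infty$ for all $\lambda>1$. Thus $\rho_{\mathbf p}$ is not right-continuous at $x$, and $x\in\ell^{(p_j)}$.

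The main obstacle is the block construction in $(iii)\Rightarrow(i)$. There I must pass from ``$\rho_{\mathbf p}(\lambda x)=\infty$ for all $\lambda>1$'' to a sequence violating $\Delta_2$. Using the factor $2$ directly, as above, I only need $\rho_{\mathbf p}(2x)=\infty$, which follows from $\lambda=2$. The delicate point is arranging disjoint finite blocks that start beyond the indices where the $\rho$-tail is small. This is a routine inductive choice, using that the finite partial sums of the $2x$-series diverge.
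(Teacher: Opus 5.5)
Your proof is correct, but it is organized differently from the paper's.

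The paper proves two separate equivalences, $(i)\Leftrightarrow(ii)$ and $(ii)\Leftrightarrow(iii)$:
\begin{itemize}
\item for $(ii)\Rightarrow(i)$ it uses dominated convergence with the majorant $(1+\delta)^{p^+}|a_j|^{p_j}$;
\item for $(iii)\Rightarrow(ii)$ it exhibits an explicit failure of $\Delta_2$ when $p^+=\infty$. The tails of $\tfrac12\mathbbm{1}_{\{n_1,n_2,\dots\}}$, with $p_{n_k}>k$, $\rho_{\mathbf p}$-converge to $0$, while their doubles have infinite modular.
\end{itemize}

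Your $(ii)\Rightarrow(iii)$ coincides with the paper's. Your $(i)\Rightarrow(ii)$ is the paper's construction with a different normalization: you take $c_k^{p_{j_k}}=k^{-2}$ with $p_{j_k}\ge k$, where the paper takes $a_{n_k}^{p_{n_k}}=1/p_{n_k}$ with $p_{n_k}>k^2$.

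The genuinely different piece is your direct $(iii)\Rightarrow(i)$, which replaces both of the paper's remaining implications. Your block argument shows that the sequential $\Delta_2$-condition forces $\rho_{\mathbf p}(2x)<\infty$ whenever $\rho_{\mathbf p}(x)<\infty$; dominated convergence then gives right-continuity. The block construction works because every tail of the divergent series $\sum_j|2x_j|^{p_j}$ is infinite, each term being finite. The blocks only need to lie beyond indices $N_k$ with $\sum_{j>N_k}|x_j|^{p_j}<1/k$; disjointness is not actually needed.

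This step never uses the specific form $|t|^{p_j}$ of the summands. It only uses that $\rho_{\mathbf p}$ is a countable sum of finite-valued, continuous, coordinatewise terms. So it carries over essentially unchanged to Musielak--Orlicz sequence modulars $\sum_j\varphi_j(|x_j|)$, and it gives a shorter three-implication cycle.

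The paper's route, in exchange, produces a concrete non-$\Delta_2$ witness built from $\mathbbm{1}_{\{n_1,n_2,\dots\}}$. Sequences of this type reappear later in Theorems \ref{modularballsnotopen} and \ref{non-first-countable}.
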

\begin{proof}
We will first show that if $p^+ = \infty$, then $\rho_{\mathbf{p}}$ is not right-continuous. There exists a strictly increasing sequence \((n_k)\subseteq \mathbb{N}\), with \(n_k\geq k\), such that \(p_{n_k}>k^2\) and that \((p_{n_k})\) is strictly increasing. Define the sequence \((a_k)\) by setting \(a_{n_k} = p_{n_k}^{-\frac{1}{p_{n_k}}}\) for \(k=1,2,\ldots\) and \(a_j = 0\) otherwise. Then \((a_j) \in \ell^{(p_n)}\), as 
$$\rho_{\mathbf{p}}((a_j)) = \textstyle \sum\limits_{k=1}^{\infty} \frac{1}{p_{n_k}} \leq \textstyle \sum\limits_{k=1}^{\infty} \frac{1}{k^2} < \infty.$$
On the other hand, for any \(\lambda > 1\), we have
$$\rho_{\mathbf{p}}((\lambda a_j)) = \textstyle \sum\limits_{k=1}^{\infty} \frac{\lambda^{p_{n_k}}}{p_{n_k}} = \infty.$$
It follows that \(\rho_{\mathbf{p}}\) is not right-continuous, thus \((i) \Rightarrow (ii)\).\\
Conversely, assume \((ii)\) holds, i.e., \(p^+ < \infty\). For any \((a_j) \in \ell^{(p_n)}\), the following inequality holds:
$$\rho_{\mathbf{p}}((2a_j)) \leq 2^{p^+} \rho_{\mathbf{p}}((a_j)).$$
If \(\rho_{\mathbf{p}}((a_j)) = \infty\), then for any \(\lambda > 1\), it is clear that
$$\rho_{\mathbf{p}}(\lambda (a_j)) \geq \rho_{\mathbf{p}}((a_j)) = \infty.$$
Furthermore, it is clear that \(\lim_{\lambda \to 1^+} \rho_{\mathbf{p}}(\lambda (a_j)) = \rho_{\mathbf{p}}((a_j))\). If \(\rho_{\mathbf{p}}((a_j)) < \infty\) and \(\lambda_k \searrow 1\), then, for some \(\delta > 0\) and for each \(k\), \((\lambda_k)^{p_j} < (1 + \delta)^{p^+}\). Thus, for any \(j \geq 1\),
$$|\lambda_k a_j|^{p_j} \leq (1 + \delta)^{p^+} |a_j|^{p_j},$$
and the series is summable. On account of Lebesgue's dominated convergence theorem it follows that
$$\lim_{\lambda \to 1^+} \textstyle \sum\limits_{j=1}^{\infty} |\lambda_k a_j|^{p_j} = \textstyle \sum\limits_{j=1}^{\infty} |a_j|^{p_j}.$$
Thus, \((i) \iff (ii)\). The implication \((ii) \Rightarrow (iii)\) follows directly. Finally, if \((ii)\) is not satisfied, then \((iii)\) cannot hold. If the sequence \((p_n)\) is unbounded, a strictly increasing subsequence \((n_k)\) of natural numbers can be chosen such that \(p_{n_k} > k\) and \((p_{n_k})\) is strictly increasing. Let \((x_k)\) be the sequence equal to \(\frac{1}{2}\) on each \(n_k\) and zero otherwise. Then the sequence \(\Big((x_k)\mathbbm{1}_{\{i:\ i > n_m\}}\Big)_{m \geq 1}\) \(\rho_{\mathbf{p}}\)-converges to \(0\), but \(\Big(2 (x_k)\mathbbm{1}_{\{i:\ i > n_m\}}\Big)_{m \geq 1}\) does not. Hence, \((iii)\) fails, proving that \((iii) \Rightarrow (ii)\).
\end{proof}
A similar situation arises in the continuous case:

\begin{theorem}\label{r-continuous-p^+L}
Let $\Omega \subseteq {\mathbb R}^n$ be bounded, $L^{p(\cdot)}(\Omega)$ and $\rho_{{p}}$ be as in Definition \ref{Lp}.
Then the following are equivalent:
\begin{enumerate}
\item[(i)] $\rho_{{p}}$ is right-continuous,
\item[(ii)] $p^+ < \infty$,
\item[(iii)] $\rho_{{p}}$ satisfies the \(\Delta_2\)-condition.
\end{enumerate}
\end{theorem}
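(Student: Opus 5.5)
We mirror the proof of Theorem~\ref{r-continuous-p^+}, replacing the sequence $(p_{n_k})$ by level sets of $p$. We prove $(i)\Rightarrow(ii)$ and $(iii)\Rightarrow(ii)$ by contraposition, assuming $p^+=\infty$, and then $(ii)\Rightarrow(i)$ and $(ii)\Rightarrow(iii)$ directly. Throughout, $\Omega$ is bounded, so $|\Omega|<\infty$ and bounded functions lie in $L^{p(\cdot)}(\Omega)$.

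\emph{Step 1: $(ii)\Rightarrow(iii)$ and $(ii)\Rightarrow(i)$.} If $p^+<\infty$, then pointwise $|2u(x)|^{p(x)}\le 2^{p^+}|u(x)|^{p(x)}$. Hence $\rho_p(2u)\le 2^{p^+}\rho_p(u)$, which gives the $\Delta_2$-condition immediately. For right-continuity, take $u$ with $\rho_p(u)<\infty$ and $\lambda_k\searrow1$, say $\lambda_k\le 2$. Then $|\lambda_k u|^{p(x)}\le 2^{p^+}|u|^{p(x)}\in L^1(\Omega)$ and $|\lambda_k u|^{p(x)}\to|u|^{p(x)}$ pointwise, so dominated convergence gives $\rho_p(\lambda_k u)\to\rho_p(u)$. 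If $\rho_p(u)=\infty$, monotonicity in $\lambda$ gives right-continuity trivially.

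\emph{Step 2: $p^+=\infty$ implies failure of right-continuity.} This is the main construction. Since $\operatorname{ess\,sup}p=\infty$, the sets $E_k:=\{x\in\Omega: k^2<p(x)\le (k+1)^2\}$ are disjoint and measurable, and infinitely many of them have positive measure. We pass to a subsequence $k_1<k_2<\cdots$ with $|E_{k_m}|>0$ and choose measurable $F_m\subseteq E_{k_m}$ with $0<|F_m|\le 1$. Next we choose constants $c_m>0$ with $c_m^{p(x)}|F_m|$ of order $1/m^2$ on $F_m$. Because $p$ varies on $F_m$, we instead set
\[
u(x)=\Bigl(\frac{1}{m^2|F_m|}\Bigr)^{1/p(x)}\quad\text{on }F_m,\qquad u=0\ \text{elsewhere},
\]
which gives $\int_{F_m}|u|^{p}=1/m^2$, so $\rho_p(u)<\infty$. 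For $\lambda>1$ we get $\rho_p(\lambda u)\ge\sum_m \lambda^{k_m^2}/m^2=\infty$, since $p>k_m^2\ge m^2$ on $F_m$. This shows $\rho_p$ is not right-continuous at $u$, giving $(i)\Rightarrow(ii)$.

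\emph{Step 3: $p^+=\infty$ implies failure of $\Delta_2$.} With the same $F_m$, we put $v=\tfrac12$ on $\bigcup_m F_m$ and $v_N=v\,\mathbbm{1}_{\bigcup_{m>N}F_m}$. Then $\rho_p(v_N)\le\sum_{m>N}2^{-m^2}|F_m|\to0$, using $|F_m|\le1$. However, $\rho_p(2v_N)=\sum_{m>N}|F_m|$. This tends to $0$ as well, so this choice fails, and this is the expected obstacle. We fix it by using the height $(m^2|F_m|)^{-1/p}$ as in Step 2: set $w_N=\tfrac12 u\,\mathbbm{1}_{\bigcup_{m>N}F_m}$. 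Then
\[
\rho_p(w_N)\le\sum_{m>N}2^{-m^2}m^{-2}\to0,
\]
while $\rho_p(2w_N)=\sum_{m>N}1/m^2$. That still tends to $0$, so instead we normalize the heights so that $\int_{F_m}|u|^p=2^{-m}$. Rather than tuning heights further, we can simply take $w_N:=\tfrac12\,u\,\mathbbm{1}_{\bigcup_{m>N}F_m}$ with $u$ from Step 2. We have $\rho_p(w_N)\to0$, since $\rho_p(u/2)<\infty$ and dominated convergence applies. On the other hand, $2w_N=u\mathbbm 1_{\cdots}$, and $\rho_p(4w_N)=\rho_p(2u\mathbbm 1_{\bigcup_{m>N}F_m})=\infty$ for every $N$ by Step 2. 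So $x_j\xrightarrow{\rho}0$ does not imply $4x_j\xrightarrow{\rho}0$. Iterating the $\Delta_2$-condition twice would force $4w_N\xrightarrow{\rho}0$, so $\Delta_2$ fails, giving $(iii)\Rightarrow(ii)$.
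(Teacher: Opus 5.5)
Your proof is correct and follows essentially the same route as the paper: level sets on which $p$ is large, heights normalized by $|F_m|^{-1/p(x)}$ so that each piece contributes a prescribed amount to the modular, dominated convergence in the bounded case, and tails of the constructed function to show the sequential failure of $\Delta_2$ (your single $u$ does both jobs, where the paper uses two separate functions). You should strip Step~3 of its false starts: $u_N:=u\mathbbm{1}_{\bigcup_{m>N}F_m}$ already satisfies $\rho_p(u_N)=\sum_{m>N}m^{-2}\to0$ and $\rho_p(2u_N)=\infty$, so you do not need to pass to $w_N=u_N/2$ and apply $\Delta_2$ twice.
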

\begin{proof}
For each $k\in\mathbb N$, set
\[\Omega_k=\{x\in\Omega:\ k\leq p(x)<k+1\}.\]
Assume first that $p^+=\infty$. Without loss of generality it can be assumed that there exists an increasing sequence
$(k_j)$ tending to infinity such that $|\Omega_{k_j}|>0$ for every $j$.  Define
\[u=\sum_{j=1}^{\infty}
{\mathbbm 1}_{\Omega_{k_j}}\,
2^{-1}|\Omega_{k_j}|^{-1/p(x)}.\]
Then
\[\rho_p(u)= \sum_{j=1}^{\infty} \frac{1}{|\Omega_{k_j}|}
\int_{\Omega_{k_j}}2^{-p(x)}\,dx \leq
\sum_{j=1}^{\infty}2^{-k_j}<\infty,\]
whereas
\[\rho_p(2u) = \sum_{j=1}^{\infty} \frac{1}{|\Omega_{k_j}|}
\int_{\Omega_{k_j}}1\,dx = \sum_{j=1}^{\infty}1 
=
\infty.
\]
For $N\ge1$ let
\[
u_N=\sum_{j=N}^{\infty}{\mathbbm 1}_{\Omega_{k_j}}
2^{-1}|\Omega_{k_j}|^{-1/p(x)}.
\]
Then $\rho_p(u_N)\le\sum_{j=N}^{\infty}2^{-k_j}\to0$, whereas
$\rho_p(2u_N)=\infty$ for every $N$.  This is precisely the
sequential failure of $\Delta_2$.
Thus, $\rho_p$ does not satisfy the \(\Delta_2\)-condition, proving
$(iii)\Rightarrow(ii)$.\\
Next, consider the function
\[
v(x)=\sum_{j=1}^{\infty}
{\mathbbm 1}_{\Omega_{k_j}}(x)\,
k_j^{-2/k_j}|\Omega_{k_j}|^{-1/p(x)}.
\]
Since $p(x)\geq k_j$ on $\Omega_{k_j}$,
\[
\rho_p(v)
=
\sum_{j=1}^{\infty}
\frac{1}{|\Omega_{k_j}|}
\int_{\Omega_{k_j}}
k_j^{-2p(x)/k_j}\,dx
\leq
\sum_{j=1}^{\infty}k_j^{-2}
<\infty.
\]
On the other hand, for every $\theta>1$, using
$k_j\leq p(x)<k_j+1$ on $\Omega_{k_j}$, we obtain
\[
\begin{aligned}
\rho_p(\theta v)
&=
\sum_{j=1}^{\infty}
\frac{1}{|\Omega_{k_j}|}
\int_{\Omega_{k_j}}
\theta^{p(x)}k_j^{-2p(x)/k_j}\,dx\\
&\geq
\sum_{j=1}^{\infty}
\theta^{k_j}k_j^{-2(k_j+1)/k_j}.
\end{aligned}
\]
Indeed, for every $\theta>1$,
\[
\theta^{k_j}k_j^{-2(k_j+1)/k_j}
\longrightarrow\infty
\qquad\text{as }j\longrightarrow\infty.
\]
Hence $\rho_p(\theta v)=\infty$ for every $\theta>1$.
Since $\rho_p(v)<\infty$, this shows that $\rho_p$ is not
right-continuous at $v$. Therefore $(i)\Rightarrow(ii)$.  \\
Conversely, assume $p^+<\infty$. Then
\[
\rho_p(2f)
=
\int_\Omega 2^{p(x)}|f(x)|^{p(x)}\,dx
\leq
2^{p^+}\rho_p(f),
\]
so $\rho_p$ satisfies the \(\Delta_2\)-condition. Thus
$(ii)\Rightarrow(iii)$.\\
Finally, assume $p^+<\infty$. For $\lambda>1$ sufficiently close to $1$,
we have
\[
|\lambda f(x)|^{p(x)}
\leq
(1+\delta)^{p^+}|f(x)|^{p(x)}
\]
for some $\delta>0$. Since the right-hand side is integrable whenever
$\rho_p(f)<\infty$, the dominated convergence theorem yields
\[
\lim_{\lambda\rightarrow 1^+}\rho_p(\lambda f)=\rho_p(f).
\]
Hence $\rho_p$ is right-continuous, and $(ii)\Rightarrow(i)$.

Therefore, $(i)$, $(ii)$, and $(iii)$ are equivalent.
\end{proof}

\smallskip

The following corollary is a direct consequence of Theorems
\ref{delta2-rcontinuity}, \ref{r-continuous-p^+}, and
\ref{r-continuous-p^+L}.

\begin{corollary}
In the notation of Theorems \ref{r-continuous-p^+} and
\ref{r-continuous-p^+L}, all modular balls in $\ell^{(p_j)}$ and
$L^{p(\cdot)}(\Omega)$ are open in the corresponding modular topology
if and only if the exponent is bounded, i.e., $p^+<\infty$.
\end{corollary}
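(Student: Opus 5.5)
The corollary has two directions, and I would prove each by combining the chain of implications in Theorem~\ref{delta2-rcontinuity} with the equivalences in Theorems~\ref{r-continuous-p^+} and \ref{r-continuous-p^+L}. I treat $\ell^{(p_j)}$ and $L^{p(\cdot)}(\Omega)$ in parallel, since the two theorems have identical structure. The only extra point to check is that Theorem~\ref{delta2-rcontinuity} speaks of balls $B_{\rho,\delta}(x)$ with arbitrary centre. Balls centred at $x\in X_\rho$ are translates of balls centred at $0$, and translation preserves $\tau_\rho$-openness by Proposition~\ref{basic-properties}(1). So it suffices to consider balls centred at the origin. Balls centred outside $X_\rho$ need not be considered, because all topologies here live on $X_\rho$; by Proposition~\ref{X_rho-stability}, a ball centred in $X_\rho$ stays inside $X_\rho$.

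\emph{Only if.} Suppose every modular ball is $\tau_\rho$-open. The implication $(i)\Rightarrow(ii)$ of Theorem~\ref{delta2-rcontinuity} then shows that $\rho_{\mathbf p}$ (respectively $\rho_p$) is right-continuous. By $(i)\Rightarrow(ii)$ of Theorem~\ref{r-continuous-p^+} (respectively Theorem~\ref{r-continuous-p^+L}), this gives $p^+<\infty$. The proof of Theorem~\ref{delta2-rcontinuity} $(i)\Rightarrow(ii)$ exhibits a ball centred at $0$ containing $x_0$ that is not open. This is consistent with the reduction above, so no further care is needed.

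\emph{If.} Suppose $p^+<\infty$. The two variable-exponent theorems give both right-continuity and the $\Delta_2$-condition. Right-continuity and Theorem~\ref{delta2-rcontinuity} $(ii)\Rightarrow(iii)$ show that every modular ball is norm-open. Under $\Delta_2$, Theorem~\ref{Mainequivalence} $(ii)\Rightarrow(iv)$ gives $\tau_{\|\cdot\|_\rho}=\tau_\rho$, so these balls are $\tau_\rho$-open; this is exactly the final clause $(iii)\Rightarrow(i)$ of Theorem~\ref{delta2-rcontinuity}.

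The main obstacle I anticipate is not analytic but a matter of hypotheses. Theorem~\ref{r-continuous-p^+} assumes $p_j>1$ and Theorem~\ref{r-continuous-p^+L} assumes $\Omega$ bounded, and both theorems apply only within their stated settings. I would therefore phrase the corollary under exactly those hypotheses (``in the notation of'' the theorems) and note that right-continuity is needed on $X_\rho$, where the theorems establish it.
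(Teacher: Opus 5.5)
Your proposal is correct and follows the paper's proof: the ``only if'' direction chains Theorem~\ref{delta2-rcontinuity} $(i)\Rightarrow(ii)$ with $(i)\Rightarrow(ii)$ of Theorems~\ref{r-continuous-p^+} and~\ref{r-continuous-p^+L}, and the ``if'' direction combines the $\Delta_2$-condition (so $\tau_\rho=\tau_{\|\cdot\|_\rho}$ by Theorem~\ref{Mainequivalence}) with the norm-openness of balls that right-continuity gives. Your extra remarks on reducing to balls centred at the origin by translation and on the standing hypotheses ($p_j>1$, $\Omega$ bounded) make explicit details that the paper leaves implicit.
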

\begin{proof}
If $p^+<\infty$, the corresponding modular satisfies $\Delta_2$; its
modular topology is therefore the Luxemburg norm topology by
Theorem~\ref{Mainequivalence}, and its balls are open by
Theorem~\ref{delta2-rcontinuity}. Conversely, if all modular balls are
open, that theorem gives radial right-continuity, and
Theorems~\ref{r-continuous-p^+} and~\ref{r-continuous-p^+L} imply
$p^+<\infty$.
\end{proof}
The preceding corollary implies that, when $p^+=\infty$, not all modular
balls are open. The following result is stronger: in the sequence-space
case, if $(p_n)$ is unbounded, then no modular ball is open in the
modular topology.\\

\begin{theorem}\label{modularballsnotopen}
Let $(p_n)$ be unbounded, i.e., $p^+=\infty$. Then no modular ball in
$\ell^{(p_n)}$ is $\tau_{\rho_{\mathbf p}}$-open.
\end{theorem}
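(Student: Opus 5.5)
The plan is to show directly that an arbitrary ball $B=B_{\rho_{\mathbf p},r}(a)$, with $a\in\ell^{(p_n)}$ and $r>0$, contains a point $y$ that is the $\rho_{\mathbf p}$-limit of a sequence lying outside $B$. By Definition~\ref{tau-open}, such a $y$ has no modular ball around it contained in $B$, so $B$ is not $\tau_{\rho_{\mathbf p}}$-open. We may also invoke Theorem~\ref{convergencecharacterization} to phrase this as: the complement of $B$ is not sequentially closed.

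\textbf{Step 1: the non-right-continuous element.} We reuse the construction from the proof of Theorem~\ref{r-continuous-p^+}. Choose $n_k$ strictly increasing with $p_{n_k}>k^2$. Define $v$ by $v_{n_k}=p_{n_k}^{-1/p_{n_k}}$ for $k\ge K$ and $v_j=0$ otherwise. Then
\[
\rho_{\mathbf p}(v)=\sum_{k\ge K}\frac{1}{p_{n_k}}\le\sum_{k\ge K}\frac1{k^2},
\]
so choosing $K$ large gives $\rho_{\mathbf p}(v)<r$. On the other hand, for every $\theta>1$,
\[
\rho_{\mathbf p}(\theta v\,\mathbbm 1_{\{i>n_N\}})=\sum_{k>N,\ k\ge K}\frac{\theta^{p_{n_k}}}{p_{n_k}}=\infty
\qquad\text{for every } N,
\]
because the terms tend to $\infty$. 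Now set $y=a+v$. Then $y\in B$, and $y\in\ell^{(p_n)}$ since both $a$ and $v$ lie in the space.

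\textbf{Step 2: the perturbations.} Fix some $\theta\in(1,2)$ and put $w_N=(\theta-1)\,v\,\mathbbm 1_{\{i>n_N\}}$. Since $0<\theta-1<1$ and $p_j\ge1$, we get
\[
\rho_{\mathbf p}(w_N)\le\sum_{k>N}\frac{1}{p_{n_k}}\longrightarrow0 .
\]
Hence $y+w_N\xrightarrow{\rho_{\mathbf p}}y$. However, $(y+w_N)-a=v+w_N$ agrees with $\theta v$ on the coordinates $i>n_N$. Therefore
\[
\rho_{\mathbf p}\bigl((y+w_N)-a\bigr)\ \ge\ \rho_{\mathbf p}(\theta v\,\mathbbm 1_{\{i>n_N\}})=\infty\ \ge r,
\]
so $y+w_N\notin B$ for every $N$.

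\textbf{Step 3: conclusion.} Given any $\varepsilon>0$, for large $N$ we have $y+w_N\in B_{\rho_{\mathbf p},\varepsilon}(y)\setminus B$. Thus no modular ball centred at $y$ is contained in $B$, and $B$ is not open. Equivalently, $\ell^{(p_n)}\setminus B$ is not closed by Proposition~\ref{rho-topology}.

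The only delicate point is Step 1. We must make sure the element $v$ simultaneously has small modular, so that $y$ lies in $B$ regardless of the radius $r$, and explodes under every dilation $\theta>1$ even after removing any finite set of coordinates. Both properties hold because $v$ is built from a tail of the sequence with $p_{n_k}>k^2$. The remaining steps reduce to the elementary inequality $|(\theta-1)t|^{p}\le|t|^{p}$ for $0<\theta-1<1$.
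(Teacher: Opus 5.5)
Your proof is correct and uses essentially the same mechanism as the paper. You take a point of the ball and approximate it modularly by points obtained by dilating a tail of its displacement from the centre, which puts every approximant at infinite modular distance from the centre. The differences are cosmetic: the paper uses the centre $\mathbf s=\mathbbm 1_{\{n_1,n_2,\ldots\}}$ and the point $(1-\varepsilon)\mathbf s$ (displacement $-\varepsilon\mathbf s$, tail dilated by $1/\varepsilon$ via finite truncations) and then translates. You instead treat an arbitrary centre $a$ directly, using the non-right-continuity element $v$ from Theorem~\ref{r-continuous-p^+} and any dilation factor $\theta\in(1,2)$.
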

\begin{proof}
Construct a sequence $(n_k)$ as follows. Choose $n_1$ such that
$p_{n_1}\geq 1$, and, inductively, choose $n_k$ so that
\[
p_{n_k}\geq \max\{k,p_{n_{k-1}}\}, \qquad k>1.
\]
Let $\mathbf{s} = {\mathbbm 1}_{\{n_1,n_2,\ldots\}}$.  Then $\mathbf{s}\in\ell^{(p_n)}$, since
\[
\rho_p\left(2^{-1}\mathbf{s}\right)
=
\sum_{k=1}^{\infty}2^{-p_{n_k}}
\leq
\sum_{k=1}^{\infty}2^{-k}
<\infty.
\]
Fix $\delta>0$. For sufficiently small $\varepsilon>0$,  $(1-\varepsilon)\mathbf{s} \in B_{\rho_p,\delta}(\mathbf{s})$, since
\[
\rho_p\bigl((1-\varepsilon)\mathbf{s}-\mathbf{s}\bigr)
=
\rho_p(-\varepsilon\mathbf{s})
=
\sum_{k=1}^{\infty}\varepsilon^{p_{n_k}}
<\delta.
\]
Moreover,  $\rho_p\bigl((1-\varepsilon)\mathbf{s}\bigr)<\infty$. Now $(1-\varepsilon)\mathbf{s}$ can be approximated in the modular sense
by its truncations
\[
\mathbf{s}^{(N)}
=
(1-\varepsilon)(s_1,\ldots,s_N,0,0,\ldots).
\]
Indeed,
\[
\rho_p\bigl(\mathbf{s}^{(N)}-(1-\varepsilon)\mathbf{s}\bigr)
\longrightarrow 0
\qquad\text{as }N\longrightarrow\infty.
\]
On the other hand, for every $N$,
\[
\rho_p\bigl(\mathbf{s}^{(N)}-\mathbf{s}\bigr)=\infty,
\]
because $\mathbf{s}^{(N)}-\mathbf{s}$ has infinitely many coordinates
equal to $-1$. Hence  $\mathbf{s}^{(N)}\notin B_{\rho_p,\delta}(\mathbf{s})$.  
It follows that $(1-\varepsilon)\mathbf{s}$ is not an interior point of
$B_{\rho_p,\delta}(\mathbf{s})$, and therefore
$B_{\rho_p,\delta}(\mathbf{s})$ is not $\tau_{\rho_p}$-open.\\
More generally, the same argument shows that no
$\mathbf{f}\in B_{\rho_p,\delta}(\mathbf{s})$ with
$\rho_p(\mathbf{f})<\infty$ is an interior point of
$B_{\rho_p,\delta}(\mathbf{s})$: every $\tau_{\rho_p}$-neighborhood of
$\mathbf{f}$ contains sufficiently long truncations
\[
\mathbf{f}^{(N)}=(f_1,\ldots,f_N,0,0,\ldots),
\]
whereas, for sufficiently large $N$,  $\mathbf{f}^{(N)}\notin B_{\rho_p,\delta}(\mathbf{s})$.  Finally, for any $\mathbf{t}\in\ell^{(p_n)}$ and $\delta>0$,
\[
B_{\rho_p,\delta}(\mathbf{t})
=
\mathbf{t}-\mathbf{s}
+
B_{\rho_p,\delta}(\mathbf{s}).
\]
By the translation invariance of $\tau_{\rho_p}$ established in
Proposition~\ref{basic-properties}(1), no modular ball is open in the
modular topology of $\ell^{(p_n)}$.
\end{proof}

\smallskip

\begin{theorem}\label{general-unbounded-example}
Let $\Omega\subset\mathbb{R}^n$ be an arbitrary domain and let
$p:\Omega\to[1,\infty)$ be measurable with $p^+=\infty$.
Then there exists $v\in L^{p(\cdot)}(\Omega)$ such that
\[
\rho_p(v)=\infty
\]
and
\[
\rho_p(\lambda v)<\infty
\qquad\text{for every }0<\lambda<1.
\]
Moreover, for every $\delta>0$, the modular ball
$B_{\rho_p,\delta}(v)$ is not $\tau_{\rho_p}$-open. Consequently,
no modular ball in $L^{p(\cdot)}(\Omega)$ is $\tau_{\rho_p}$-open.
\end{theorem}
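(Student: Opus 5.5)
Our plan is to build $v$ on the level sets of $p$, in the spirit of the proof of Theorem~\ref{r-continuous-p^+L}, and then to run the truncation argument of Theorem~\ref{modularballsnotopen}.

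\emph{Construction of $v$.} Since $\Omega$ may be unbounded, we first choose sets of finite positive measure. For $k\in\mathbb N$ let $\Omega_k=\{x\in\Omega: k\le p(x)<k+1\}$. Because $p^+=\infty$, there is a strictly increasing sequence $(k_j)$ with $|\Omega_{k_j}|>0$. Intersecting with large balls, we pick $E_j\subseteq\Omega_{k_j}$ with $0<|E_j|<\infty$; these sets are pairwise disjoint. Put
\[
v=\sum_{j=1}^\infty \mathbbm 1_{E_j}\,|E_j|^{-1/p(x)}.
\]
Then
\[
\rho_p(\lambda v)=\sum_j |E_j|^{-1}\int_{E_j}\lambda^{p(x)}\,dx.
\]
For $\lambda=1$ every term equals $1$, so $\rho_p(v)=\infty$. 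For $0<\lambda<1$ the $j$-th term is at most $\lambda^{k_j}$, and $\sum_j\lambda^{k_j}<\infty$. Hence $v\in L^{p(\cdot)}(\Omega)$ and $\rho_p(\lambda v)<\infty$ for every $0<\lambda<1$.

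\emph{Non-openness of $B_{\rho_p,\delta}(v)$.} Fix $\delta>0$ and choose $\varepsilon\in(0,1)$ with $\sum_j\varepsilon^{k_j}<\delta$. Then $\rho_p(-\varepsilon v)<\delta$, so $w:=(1-\varepsilon)v$ lies in the ball. Consider the truncations $w_N=(1-\varepsilon)v\,\mathbbm 1_{E_1\cup\dots\cup E_N}$. We have
\[
\rho_p(w_N-w)\le\sum_{j>N}(1-\varepsilon)^{k_j}\to0,
\]
so $w_N\xrightarrow{\rho}w$. On the other hand, $w_N-v=-v$ on $\bigcup_{j>N}E_j$, and on each such $E_j$ the modular contributes exactly $1$. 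Therefore $\rho_p(w_N-v)=\infty$ and $w_N\notin B_{\rho_p,\delta}(v)$. If the ball were $\tau_{\rho_p}$-open, it would contain some $B_{\rho_p,\eta}(w)$, and hence $w_N$ for all large $N$. This is a contradiction. (Equivalently: the complement of the ball contains a sequence $\rho$-converging to $w$, which lies in the ball.)

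\emph{Every ball.} For arbitrary $t\in L^{p(\cdot)}(\Omega)$ and $\delta>0$ we have $B_{\rho_p,\delta}(t)=(t-v)+B_{\rho_p,\delta}(v)$. By Proposition~\ref{basic-properties}(1), translates of open sets are open, and translating back by $v-t$ shows the converse too. So if $B_{\rho_p,\delta}(t)$ were open, $B_{\rho_p,\delta}(v)$ would be open, contradicting the previous step.

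The only delicate point is the first step: selecting finite-measure pieces $E_j$ when $\Omega$ is an arbitrary (possibly unbounded) domain. This is handled by $\sigma$-finiteness of Lebesgue measure, and the rest is routine.
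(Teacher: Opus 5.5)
Your proof is correct and follows the paper's argument: build $v$ on finite-measure pieces $E_j$ of the level sets $\{k_j\le p<k_j+1\}$, place $(1-\varepsilon)v$ in the ball, approximate it modularly by truncations that lie at infinite modular distance from $v$, and finish by translation. The only difference is that you drop the paper's weight $k_j^{1/k_j}$, so each block contributes exactly $1$ to $\rho_p(v)$ and your estimates reduce to geometric tails $\sum_j\lambda^{k_j}$ instead of $\sum_j\lambda^{k_j}k_j^2$, which is a harmless simplification.
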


\begin{proof}

Since $p^+=\infty$, there exists a strictly increasing sequence
$(k_j)$ of positive integers, with $k_j\to\infty$, such that the sets
\[
F_j:=\{x\in\Omega:k_j\leq p(x)<k_j+1\}
\]
have positive measure. Choose measurable sets
$E_j\subset F_j$ with $0<|E_j|<\infty$.  These sets are pairwise
disjoint. Passing to a subsequence if necessary, we may
assume that $k_j\geq j$ for every $j$.
Define
\[
v(x)
=
\sum_{j=1}^{\infty}
k_j^{1/k_j}
|E_j|^{-1/p(x)}
{\mathbbm 1}_{E_j}(x).
\]
Since the sets $E_j$ are pairwise disjoint,
\[
\rho_p(v)
=
\sum_{j=1}^{\infty}
\frac{1}{|E_j|}
\int_{E_j}
k_j^{p(x)/k_j}\,dx.
\]
On $E_j$ one has $p(x)\geq k_j$, and therefore $k_j^{p(x)/k_j}\geq k_j$.  Hence
\[
\rho_p(v)
\geq
\sum_{j=1}^{\infty}k_j
=
\infty.
\]
On the other hand, let $0<\lambda<1$. Since
$k_j\leq p(x)<k_j+1$ on $E_j$,
\[
\begin{aligned}
\rho_p(\lambda v)
&=
\sum_{j=1}^{\infty}
\frac{1}{|E_j|}
\int_{E_j}
\lambda^{p(x)}
k_j^{p(x)/k_j}\,dx\\
&\leq
\sum_{j=1}^{\infty}
\lambda^{k_j}
k_j^{(k_j+1)/k_j}.
\end{aligned}
\]
Since  $k_j^{(k_j+1)/k_j}\leq k_j^2$  and the $k_j$ are distinct positive integers,
\[
\sum_{j=1}^{\infty}
\lambda^{k_j}k_j^2
\leq
\sum_{m=1}^{\infty}\lambda^m m^2
<\infty.
\]
Thus $\rho_p(\lambda v)<\infty$, for every $0<\lambda<1$, and consequently $v\in L^{p(\cdot)}(\Omega)$.\\
Furthermore,
\[
\rho_p(\lambda v)\longrightarrow0
\qquad\text{as }\lambda\downarrow0.
\]
Indeed, for $0<\lambda\leq 1/2$ the preceding series is dominated by
the convergent series
\[
\sum_{j=1}^{\infty}2^{-k_j}k_j^2,
\]
and the conclusion follows from dominated convergence.\\
Fix now $\delta>0$. Choose $\varepsilon\in(0,1)$ sufficiently close
to $1$ so that
\[
\rho_p((1-\varepsilon)v)<\delta.
\]
Then  $\varepsilon v\in B_{\rho_p,\delta}(v)$.  For $N\geq1$, define
\[
v_N
=
\sum_{j=1}^{N}
k_j^{1/k_j}
|E_j|^{-1/p(x)}
{\mathbbm 1}_{E_j}.
\]
For any fixed $\varepsilon\in(0,1)$,
\[
\begin{aligned}
\rho_p\bigl(\varepsilon(v-v_N)\bigr)
&=
\sum_{j=N+1}^{\infty}
\frac{1}{|E_j|}
\int_{E_j}
\varepsilon^{p(x)}
k_j^{p(x)/k_j}\,dx\\
&\leq
\sum_{j=N+1}^{\infty}
\varepsilon^{k_j}k_j^2
\longrightarrow0
\end{aligned}
\]
as $N\to\infty$. Hence  
$$\varepsilon v_N  \mathop{\longrightarrow}^{\rho_p}
\varepsilon v.$$  
However, for every $N$,  we have  $\rho_p(v-\varepsilon v_N)=\infty$.  Indeed, on every $E_j$ with $j>N$ one has
$v-\varepsilon v_N=v$, and therefore
\[
\rho_p(v-\varepsilon v_N)
\geq
\sum_{j=N+1}^{\infty}
\frac{1}{|E_j|}
\int_{E_j}
k_j^{p(x)/k_j}\,dx
\geq
\sum_{j=N+1}^{\infty}k_j
=
\infty.
\]
Consequently, $\varepsilon v_N\notin B_{\rho_p,\delta}(v)$, for every $N \geq 1$. Thus $\varepsilon v$ belongs to $B_{\rho_p,\delta}(v)$ but is the
$\rho_p$-limit of a sequence lying entirely outside that ball.
By Theorem~\ref{convergencecharacterization}, $B_{\rho_p,\delta}(v)$ cannot be $\tau_{\rho_p}$-open.  Finally, for any $w\in L^{p(\cdot)}(\Omega)$,
\[
B_{\rho_p,\delta}(w)
=
w-v+B_{\rho_p,\delta}(v).
\]
Since translations are homeomorphisms for the modular topology by
Proposition~\ref{basic-properties}(1), no modular ball of radius
$\delta$ is $\tau_{\rho_p}$-open. Since $\delta>0$ was arbitrary,
no modular ball in $L^{p(\cdot)}(\Omega)$ is
$\tau_{\rho_p}$-open.
\end{proof}

\smallskip
The following lemma is well known.
\begin{lemma}\label{closedballsclosed}
For any $\mathbf z\in\ell^{(p_n)}$ and $\delta>0$, the set
\[
B^{\ast}_{\rho_{\mathbf p},\delta}(\mathbf z)
=\{\mathbf y\in\ell^{(p_n)}:\rho_{\mathbf p}(\mathbf y-\mathbf z)\le\delta\}
\]
is closed in the modular topology. Its continuous analogue is also true
in $L^{p(\cdot)}(\Omega)$.
\end{lemma}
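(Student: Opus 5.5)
By the characterization of closed sets following Proposition~\ref{rho-topology}, it suffices to show that $B^{\ast}_{\rho_{\mathbf p},\delta}(\mathbf z)$ is sequentially closed. That is, I will take $(\mathbf y^{(k)})\subset B^{\ast}_{\rho_{\mathbf p},\delta}(\mathbf z)$ with $\mathbf y^{(k)}\xrightarrow{\rho_{\mathbf p}}\mathbf y$ and show that $\rho_{\mathbf p}(\mathbf y-\mathbf z)\le\delta$. As observed after Definition~\ref{fatouproperty}, this is equivalent to $\rho_{\mathbf p}$ having the Fatou property. The plan is therefore to verify Fatou for $\rho_{\mathbf p}$ via pointwise convergence together with Fatou's lemma for series.

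\textbf{Step 1: pointwise convergence.} From $\rho_{\mathbf p}(\mathbf y^{(k)}-\mathbf y)\to0$ we get, for each fixed $n$,
\[
|y^{(k)}_n-y_n|^{p_n}\le\rho_{\mathbf p}(\mathbf y^{(k)}-\mathbf y)\to0 .
\]
Hence $y^{(k)}_n\to y_n$ for every $n$, and consequently $|y^{(k)}_n-z_n|^{p_n}\to|y_n-z_n|^{p_n}$ for every $n$.

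\textbf{Step 2: Fatou's lemma.} Fatou's lemma for nonnegative series (counting measure) gives
\[
\rho_{\mathbf p}(\mathbf y-\mathbf z)=\sum_n\lim_k|y^{(k)}_n-z_n|^{p_n}\le\liminf_k\sum_n|y^{(k)}_n-z_n|^{p_n}\le\delta .
\]
Thus $\mathbf y\in B^{\ast}_{\rho_{\mathbf p},\delta}(\mathbf z)$. Membership of $\mathbf y$ in $\ell^{(p_n)}$ is automatic, either by Proposition~\ref{X_rho-stability} or because $\tau_\rho$-limits of sequences in $X_\rho$ lie in $X_\rho$.

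\textbf{Continuous analogue.} In $L^{p(\cdot)}(\Omega)$ the only new point is Step 1, since modular convergence no longer gives pointwise convergence directly. For every $\eta\in(0,1)$ we have
\[
|\{|u_k-u|\ge\eta\}|\le\eta^{-p^+}\ldots,
\]
but $p^+$ may be infinite, so I will argue differently. On the set where $|u_k-u|\ge\eta$ we have $|u_k-u|^{p(x)}\ge\eta^{p(x)}$, which is positive but not bounded below uniformly in $x$. I therefore plan to show that $|u_k-u|^{p(\cdot)}\to0$ in $L^1$, which holds by hypothesis. Passing to a subsequence, this gives $|u_k-u|^{p(x)}\to0$ a.e. Since $p(x)<\infty$ a.e., it follows that $u_k\to u$ a.e. along that subsequence.

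I first choose a subsequence realizing $\liminf_k\rho_p(u_k-z)$ and then extract the a.e.-convergent further subsequence. Fatou's lemma for integrals then gives $\rho_p(u-z)\le\liminf\rho_p(u_k-z)\le\delta$.

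The main (mild) obstacle is exactly this extraction of a.e. convergence without $\Delta_2$ and with unbounded $p$. Handling it through $L^1$-convergence of $|u_k-u|^{p(\cdot)}$ avoids any dependence on $p^+$. Everything else is routine.
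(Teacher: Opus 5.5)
Your proof is correct. The sequence-space part is the paper's argument: coordinatewise convergence from $|y^{(k)}_n-y_n|^{p_n}\le\rho_{\mathbf p}(\mathbf y^{(k)}-\mathbf y)$, then Fatou's lemma for series.

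For the continuous analogue you take a shorter route than the paper. The paper first shows that modular convergence implies convergence in measure on each finite-measure subset. It does this by splitting into $\{p\le M\}$ and $\{p>M\}$, applying Chebyshev on the first piece and letting $M\to\infty$. It then needs an exhaustion and diagonal argument to produce an a.e.-convergent subsequence.

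You instead observe that $\rho_p(u_k-u)\to0$ says exactly that the scalar functions $|u_k-u|^{p(\cdot)}$ tend to $0$ in $L^1(\Omega)$. Hence a subsequence tends to $0$ a.e., and since $1\le p(x)<\infty$ this forces $u_k\to u$ a.e. along that subsequence. This avoids convergence in measure, finite-measure pieces and the diagonal extraction, and it works directly on an arbitrary open $\Omega$. What the paper's route gives in addition is local convergence in measure of the whole sequence. That is the kind of intermediate fact used again in Lemma~\ref{lqconvergence} and Lemma~\ref{weighted-modular-completeness}, but it is not needed for this lemma.

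Two cosmetic points. First, delete the aborted Chebyshev estimate with $\eta^{-p^+}$ from the write-up. Second, you do not need to pick a subsequence realizing the $\liminf$ first. Every $u_k$ already satisfies $\rho_p(u_k-z)\le\delta$, so Fatou along any a.e.-convergent subsequence gives $\rho_p(u-z)\le\delta$.
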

\begin{proof}
Suppose $\mathbf x^j$ lies in the displayed set and
$\mathbf x^j\xrightarrow{\rho_{\mathbf p}}\mathbf x$. Coordinatewise
convergence follows, and Fatou's lemma gives
\[
\rho_{\mathbf p}(\mathbf x-\mathbf z)
\le\liminf_j\rho_{\mathbf p}(\mathbf x^j-\mathbf z)\le\delta.
\]
For the continuous analogue, modular convergence implies convergence in
measure on every finite-measure subset: split that subset into
$\{p\le M\}$ and $\{p>M\}$, use Chebyshev's inequality on the first,
and then let $M\to\infty$. A diagonal argument supplies an
almost-everywhere convergent subsequence, and the same Fatou argument
applies in $L^{p(\cdot)}(\Omega)$.
\end{proof}

\begin{proposition}\label{closure-of-modular-ball}
For either variable-exponent modular and every $z$ and $\delta>0$,
\[
\overline{B_{\rho_p,\delta}(z)}^{\,\rho_p}
=\{y:\rho_p(y-z)\le\delta\}.
\]
\end{proposition}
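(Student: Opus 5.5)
The proof splits into two inclusions. For ``$\subseteq$'' I will use Lemma~\ref{closedballsclosed}. The set $\{y:\rho_p(y-z)\le\delta\}$ is $\tau_{\rho_p}$-closed and contains $B_{\rho_p,\delta}(z)$. By the definition of $\overline{A}^{\rho}$ as an intersection of closed supersets, the closure is contained in it.

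For ``$\supseteq$'', take $y$ with $\rho_p(y-z)\le\delta$. If $\rho_p(y-z)<\delta$ there is nothing to prove, so assume $\rho_p(y-z)=\delta$. By Proposition~\ref{rho-topology}, it suffices to exhibit a sequence in $B_{\rho_p,\delta}(z)$ that $\rho_p$-converges to $y$. The natural candidates are the radial contractions
\[
y_t := z + t(y-z), \qquad 0<t<1,\ t\uparrow 1.
\]
Convexity together with $\rho_p(0)=0$ gives $\rho_p(y_t-z)=\rho_p(t(y-z))\le t\,\rho_p(y-z)=t\delta<\delta$, so $y_t\in B_{\rho_p,\delta}(z)$.

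It then remains to check that $\rho_p(y_t-y)=\rho_p((1-t)(y-z))\to 0$ as $t\uparrow1$. Write $w=y-z$; we know $\rho_p(w)=\delta<\infty$. In the sequence case,
\[
\rho_p((1-t)w)=\sum_j (1-t)^{p_j}|w_j|^{p_j}\le (1-t)\sum_j|w_j|^{p_j}=(1-t)\delta,
\]
since $p_j\ge1$. The same pointwise bound $(1-t)^{p(x)}\le 1-t$ works in $L^{p(\cdot)}$. Alternatively, this step follows directly from convexity: $\rho_p((1-t)w)\le(1-t)\rho_p(w)\to0$. So the sequence $y_{1-1/n}$ lies in the open ball and converges modularly to $y$, hence $y$ belongs to the closure.

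The only delicate point is that the scaling argument must use the finiteness $\rho_p(y-z)\le\delta$. Without it, for instance when $\rho_p(y-z)=\infty$, contractions need not converge modularly, as Theorem~\ref{general-unbounded-example} illustrates. Here finiteness is given, so I expect no real obstacle. The main work sits in Lemma~\ref{closedballsclosed}, which is already available, and it is what justifies the first inclusion for both the discrete and continuous modulars.
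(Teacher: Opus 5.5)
Your proof is correct and follows the paper's argument: Lemma~\ref{closedballsclosed} gives $\subseteq$, and the radial points $y_t=z+t(y-z)$ give $\supseteq$, via the convexity bounds $\rho_p(y_t-z)\le t\delta<\delta$ and $\rho_p(y_t-y)\le(1-t)\rho_p(y-z)\to0$. One correction to your closing remark: $\rho_p((1-t)(y-z))\to0$ holds for every $y-z\in X_\rho$, since $X_\rho=\{v:\lim_{\lambda\to0}\rho(\lambda v)=0\}$ (Theorem~\ref{general-unbounded-example} shows exactly this for a $v$ with $\rho_p(v)=\infty$), so the hypothesis $\rho_p(y-z)\le\delta$ is needed only to put $y_t$ inside the open ball, not for the modular convergence.
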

\begin{proof}
Lemma~\ref{closedballsclosed} gives the inclusion from left to right.
If $\rho_p(y-z)\le\delta$ and $0<t<1$, put
$y_t=z+t(y-z)$.  Convexity gives
$\rho_p(y_t-z)\le t\delta<\delta$, while
\[
\rho_p(y_t-y)=\rho_p((1-t)(y-z))
\le(1-t)\rho_p(y-z)\longrightarrow0
\]
as $t\uparrow1$.  Hence $y$ lies in the modular closure of the open
ball, proving the reverse inclusion.
\end{proof}

\smallskip

It follows immediately from Proposition \ref{basic-properties}(1) that:

\begin{corollary}\label{multiplesofclosedballs}
Let $\overline{B_{\rho_{\mathbf{p}},\delta}(\mathbf{z})}$ denote any closed ball centered at ${\mathbf z}$ in either $\ell^{(p_n)}$ or $L^{p(\cdot)}(\Omega)$. Then for any $j\geq 1$, the set $j\overline{B_{\rho_{\mathbf{p}},\delta}(\mathbf{z})}$ is modularly closed. Indeed, for any modularly closed set $C$, the complement of $jC$ is $j(X_\rho\setminus C)$, which
is open by Proposition~\ref{basic-properties}(2).
\end{corollary}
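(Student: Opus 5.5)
The claim is that $jC$ is modularly closed whenever $C$ is modularly closed and $j\ge 1$. I will argue directly from the sequential characterization of closed sets (Proposition \ref{rho-topology}), rather than through complements. Complements need some care here: $X_\rho\setminus jC = j(X_\rho\setminus C)$ holds only because $x\mapsto jx$ is a bijection of $X_\rho$. The proof then applies this to $C=\overline{B_{\rho_{\mathbf p},\delta}(\mathbf z)}$, which is modularly closed by Lemma \ref{closedballsclosed}.

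Let $(x_n)\subset jC$ with $x_n\xrightarrow{\rho}x$. Write $x_n=jc_n$ with $c_n\in C$. The key step is to show $c_n\xrightarrow{\rho} x/j$. Put $\beta=1/j\in(0,1]$. Convexity together with $\rho(0)=0$ gives
\[
\rho\bigl(c_n-\beta x\bigr)=\rho\bigl(\beta(x_n-x)\bigr)\le \beta\,\rho(x_n-x)\longrightarrow 0 .
\]
This is exactly the computation in the proof of Proposition \ref{basic-properties}(2). Since $C$ is closed, $x/j\in C$, and therefore $x\in jC$. By Proposition \ref{rho-topology}, $jC$ is $\tau_\rho$-closed.

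The alternative complement route is equally short. The set $X_\rho\setminus C$ is open, so $j(X_\rho\setminus C)$ is open by Proposition \ref{basic-properties}(2) with $\alpha=j\ge1$. Because multiplication by $j$ is a bijection, this set equals $X_\rho\setminus jC$, so $jC$ is closed.

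The argument never uses that $j$ is an integer, only that $j\ge 1$. For $0<j<1$ the convexity estimate would go the wrong way, and without $\Delta_2$ the claim can fail. There is no real obstacle; the only care point is verifying that the complement identity relies on bijectivity of scaling on $X_\rho$, which holds because $X_\rho$ is a vector space.
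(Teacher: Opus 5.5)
Your proof is correct and is essentially the paper's. The paper argues by your second route: $X_\rho\setminus jC=j(X_\rho\setminus C)$ together with Proposition~\ref{basic-properties}(2), with the closed ball being closed by Lemma~\ref{closedballsclosed}. Your sequential version just unpacks the estimate $\rho(\beta w)\le\beta\rho(w)$, $\beta=1/j\le 1$, on which that proposition rests.

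Your side remark about $0<j<1$ is also sound, though you don't prove it. The reason is that $jC$ is $\tau_\rho$-closed exactly when $C$ is $\tau_j$-closed. When $\Delta_2$ fails, Proposition~\ref{scaled-strictness} gives $\tau_j\subsetneq\tau_1$, so some $\tau_\rho$-closed $C$ is not $\tau_j$-closed, and for that $C$ the set $jC$ is not closed.
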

\begin{definition}
Given a modular space $(X,\rho)$ and a subset $Y\subseteq X_{\rho}$, the modular diameter of $Y$ is defined as $$\text{diam}_{\rho}(Y):=\sup\{\rho(a-b),a,b\in Y \}.$$
\end{definition}

\smallskip

\begin{corollary}\label{infinitediameter}
If $(p_n)$ is unbounded, every nonempty modularly open subset of
$\ell^{(p_n)}$ has infinite modular diameter. The same holds in
$L^{p(\cdot)}(\Omega)$ when $\Omega$ is bounded, $p$ is finite almost
everywhere, and $p^+=\infty$. 
\end{corollary}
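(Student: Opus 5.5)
Given a nonempty modularly open set $\mathcal O$, I will pick $x\in\mathcal O$ and show that $\mathcal O$ contains points $y$ with $\rho_p(y-x)=\infty$, or at least with $\rho_p(y-x)$ arbitrarily large. Since $\mathcal O-x$ is open by Proposition~\ref{basic-properties}(1), I may assume $x=0\in\mathcal O$.

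The mechanism is the truncation argument from Theorems~\ref{modularballsnotopen} and~\ref{general-unbounded-example}, run in reverse: I exhibit vectors of infinite modular that are limits of sequences which must eventually lie in $\mathcal O$.

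\emph{Sequence case.} Choose $(n_k)$ with $p_{n_k}\ge k$ increasing, as in Theorem~\ref{modularballsnotopen}. For a parameter $c>1$ consider $\mathbf w=c\,\mathbbm 1_{\{n_k:k\ge K\}}$. Then $\rho_{\mathbf p}(\mathbf w)=\infty$, while its tails satisfy $\rho_{\mathbf p}(\mathbf w\mathbbm 1_{\{i>n_m\}})=\infty$ as well, so tails alone do not help. Instead I will use scaled tails $\mathbf u_m=\tfrac12\mathbbm 1_{\{n_k:k\ge m\}}$. These satisfy $\rho_{\mathbf p}(\mathbf u_m)\le\sum_{k\ge m}2^{-k}\to0$, so $\mathbf u_m\xrightarrow{\rho}0$, and hence by Theorem~\ref{convergencecharacterization} $\mathbf u_m\in\mathcal O$ for all $m\ge m_0$. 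Fix such a $\mathbf u=\mathbf u_{m_0}\in\mathcal O$. Then $\rho_{\mathbf p}(\mathbf u-\mathbf v)$ is still finite for $\mathbf v=0\in\mathcal O$, so I need a second point. I will iterate: $\mathcal O$ is open at $\mathbf u$, so the same argument applied to $\mathcal O-\mathbf u$ gives $\mathbf u+\mathbf u'\in\mathcal O$ for a tail $\mathbf u'=\tfrac12\mathbbm 1_{\{n_k:k\ge m_1\}}$ with $m_1\ge m_0$. Repeating $M$ times yields
\[
\mathbf y_M=\tfrac12\sum_{i=0}^{M-1}\mathbbm 1_{\{n_k:k\ge m_i\}}\in\mathcal O .
\]
On coordinates $n_k$ with $k\ge m_{M-1}$, this point equals $M/2$, so for $M\ge 3$ we get $\rho_{\mathbf p}(\mathbf y_M-0)\ge\sum_{k\ge m_{M-1}}(3/2)^{p_{n_k}}=\infty$. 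Since both $0$ and $\mathbf y_M$ lie in $\mathcal O$, the diameter is infinite. All $\mathbf y_M$ lie in $\ell^{(p_n)}$ because each is a finite sum of elements of it.

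\emph{Function case.} Here I will use the sets $E_j\subset F_j=\{k_j\le p<k_j+1\}$ from Theorem~\ref{general-unbounded-example}, with $k_j\ge j$, but take $u_m=\sum_{j\ge m}\tfrac12|E_j|^{-1/p(x)}\mathbbm 1_{E_j}$. Then $\rho_p(u_m)\le\sum_{j\ge m}2^{-k_j}\to0$. Boundedness of $\Omega$ and $p<\infty$ a.e.\ ensure that $p^+=\infty$ produces infinitely many $F_j$ of positive (finite) measure. The same iteration gives a point of $\mathcal O$ equal to $\tfrac M2|E_j|^{-1/p}$ on $E_j$ for all large $j$. Its modular is at least $\sum_j (M/2)^{k_j}=\infty$ once $M\ge3$.

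The step needing care is the iteration. Each stage uses openness at a new point, which is legitimate because $\mathcal O-\mathbf y_i$ is again an open neighbourhood of $0$ by translation invariance, and the corresponding tail sequence $\rho$-converges to $0$. Only finitely many stages ($M=3$) are needed, so no limiting process arises. I expect this iteration to be the main point requiring verification.
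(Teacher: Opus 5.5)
Your proof is correct and follows essentially the same mechanism as the paper's: two successive uses of openness, each with a modularly small displacement, where the two displacements add up on a common tail of high exponents to a vector of infinite modular. The paper runs this chain as $\mathbf s\to(1-\varepsilon)\mathbf s\to\mathbf s^{(N)}$ around the special point of Theorem~\ref{modularballsnotopen}, whereas you translate to $0$ and add two half-tails. Incidentally, $M=2$ already suffices: $\mathbf y_2$ equals $1$ on infinitely many coordinates $n_k$ (respectively, $|y_2|^{p}$ has integral $1$ over each of infinitely many $E_j$).
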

\begin{proof}
Let ${\mathbf s}\in \ell^{(p_n)}$ be the sequence introduced in Theorem \ref{modularballsnotopen}. Consider first an open set \(\mathcal O\) such that ${\mathbf s}\in \mathcal O$; since $\mathcal O$ is open, for some $\delta>0$ the modular ball \(B_{\rho_{\mathbf{p}},\delta}(\mathbf{s})\) is contained in $\mathcal O$. On account of Theorem \ref{modularballsnotopen}, one can fix $\epsilon>0$ such that $(1-\epsilon){\mathbf s}\in B_{\rho_{\mathbf{p}},\delta}(\mathbf{s})$. Since \({\mathcal O}\) is open by assumption, there must exist a modular ball $B_{\rho_{\mathbf{p}},\eta}((1-\epsilon)\mathbf{s})$ that is contained in \({\mathcal O}\).  It was shown in Theorem \ref{modularballsnotopen} that any modular ball centered at $(1-\epsilon){\mathbf s}$ must contain an element \(\mathbf{t}\) such that \(\rho_{\mathbf{p}}(\mathbf{s} - \mathbf{t}) = \infty\).  In conclusion, since ${\mathbf t} \in \mathcal O$ and ${\mathbf s} \in \mathcal O$, one has
\begin{equation*}
\text{diam}_{\rho_{{\mathbf p}}}({\mathcal O})\geq \rho_{{\mathbf p}}\left({\mathbf t}-{\mathbf s}\right)=\infty.
\end{equation*}
To move on to the general case,  if \(\mathcal O\) is an arbitrary modularly open subset of $\ell^{(p_n)}$ and \(\mathbf{w} \in \mathcal{O}\), the set \(\mathcal{O} + \{\mathbf{s} - \mathbf{w}\}\) is modularly open and contains \(\mathbf{s}\), and hence, a modular ball \(B_{\rho_{\mathbf{p}},\delta}(\mathbf{s})\). It follows that
$$\text{diam}_{\rho_{{\mathbf p}}} \left(\mathcal{O} + \{\mathbf{s} - \mathbf{w}\}\right) = \infty,$$
and since the \(\rho\)-diameter is translation-invariant, \(\mathcal{O}\) must have infinite diameter.\\
For the function space, use the function $v$ constructed in
Theorem~\ref{general-unbounded-example}. If an open set contains
$v$, it also contains $\varepsilon v$ for some $\varepsilon\in(0,1)$
close to $1$. A modular ball about $\varepsilon v$ contained in the open
set contains $\varepsilon v_N$ for all sufficiently large truncations
$v_N$, while $\rho_p(v-\varepsilon v_N)=\infty$. Translation again
handles an arbitrary nonempty open set.
\end{proof}

We first establish a separability property of the modular topology that will
play an important role in the sequel.

\begin{theorem}\label{separability}
If $(p_n)$ is unbounded, then the subspace ${\mathbf c}_{00}$ consisting of all sequences which vanish at all but a finite number of terms is dense in
$(\ell^{(p_n)},\tau_{\rho_{\mathbf{p}}})$. Hence, $(\ell^{(p_n)},\tau_{\rho_{\mathbf{p}}})$ is separable.
Analogously, if $\Omega\subset\mathbb{R}^n$ is a domain and
$p\in C(\Omega)$, possibly with $p^+=\infty$, then $C_0^\infty(\Omega)$ is $\tau_{\rho_p}$-dense in
$L^{p(\cdot)}(\Omega)$. It follows that
$(L^{p(\cdot)}(\Omega),\tau_{\rho_p})$ is separable
if
\[
1\leq p(x)<\infty
\]
for every $x\in\Omega$, even when $p$ is unbounded, i.e.,
$p^+=\infty$.
\end{theorem}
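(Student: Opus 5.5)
The plan is to show, in both settings, that every element is the modular limit of a \emph{sequence} from the proposed dense set. By Proposition~\ref{rho-topology}, the closure $\overline{A}^{\rho}$ contains all $\rho$-limits of sequences in $A$, so this is enough. Separability will then follow by passing from the dense subspace to a countable subset. That second step needs care, because the closure of a countable set need not be obtained in one sequential step. I will handle this by a two-stage argument: the closure of the countable set contains the dense subspace, and the closure of that subspace is everything.

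\emph{Sequence case.} Let $\mathbf a\in\ell^{(p_n)}$ and consider the truncations $\mathbf a^{(N)}=(a_1,\dots,a_N,0,\dots)$. There is a subtlety here. Membership in $\ell^{(p_n)}$ only gives $\rho_{\mathbf p}(\lambda\mathbf a)<\infty$ for some $\lambda>0$, while we need $\rho_{\mathbf p}(\mathbf a-\mathbf a^{(N)})=\sum_{j>N}|a_j|^{p_j}\to0$. That tail tends to zero only when $\rho_{\mathbf p}(\mathbf a)<\infty$; if $\rho_{\mathbf p}(\mathbf a)=\infty$, every tail is infinite.

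So I proceed in two stages. First, $\lambda\mathbf a$ has finite modular for small $\lambda$, and for $t\uparrow 1$ in a suitable range we have $\rho_{\mathbf p}(t\mathbf a)<\infty$. I will show that every element of $D=\{\mathbf b:\rho_{\mathbf p}(\mathbf b)<\infty\}$ lies in $\overline{{\mathbf c}_{00}}^{\rho}$, by the tail argument together with dominated convergence. Then I need $\mathbf a\in\overline{D}^{\rho}$. This is not automatic, since $\rho_{\mathbf p}(\mathbf a-t\mathbf a)=\rho_{\mathbf p}((1-t)\mathbf a)$ is finite and tends to zero only if $\rho_{\mathbf p}(s\mathbf a)<\infty$ for some fixed $s$ and $1-t\le s$. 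This works: take $t=1-s/m$, so that $(1-t)\mathbf a=(s/m)\mathbf a$ and $\rho_{\mathbf p}((s/m)\mathbf a)\le \rho_{\mathbf p}(s\mathbf a)/m\to 0$ by convexity. It remains to check $t\mathbf a\in D$. This is the real obstacle, because $\rho_{\mathbf p}(t\mathbf a)$ can be infinite for $t$ near $1$ when $\rho_{\mathbf p}(\mathbf a)=\infty$.

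The fix is to approximate differently. I would split coordinatewise: take $\mathbf b^{(N)}=\mathbf a^{(N)}+(\text{tail of } s\mathbf a)$, or more cleanly write $\mathbf a=\mathbf a^{(N)}+\mathbf r_N$ and note that $\overline{{\mathbf c}_{00}}^{\rho}$ is a subspace by Proposition~\ref{subspace}. It then suffices to show $s\mathbf a\in\overline{{\mathbf c}_{00}}^{\rho}$, which holds by the tail argument since $\rho_{\mathbf p}(s\mathbf a)<\infty$. Scaling by $1/s$ keeps us in the closed subspace. This subspace trick removes the obstacle entirely.

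For countability, let $Q$ be the set of finitely supported rational sequences. Each element of ${\mathbf c}_{00}$ is a modular limit of elements of $Q$; this reduces to finite-dimensional convergence, where modular and coordinate convergence agree by Proposition~\ref{equivalence-finte-dim}. Hence $\overline{Q}^{\rho}\supseteq{\mathbf c}_{00}$, so $\overline{Q}^{\rho}\supseteq\overline{{\mathbf c}_{00}}^{\rho}=\ell^{(p_n)}$.

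\emph{Function case.} For $u\in L^{p(\cdot)}(\Omega)$, pick $s>0$ with $\rho_p(su)<\infty$. By Proposition~\ref{subspace} it suffices to approximate $w=su$, which has finite modular, by $C_0^\infty$ functions in modular. I will do this in three steps:
\begin{enumerate}
\item Truncate, $w_k=w\,\mathbbm 1_{K_k\cap\{|w|\le k\}}$ with compact exhaustion $K_k$. Then $\rho_p(w-w_k)\to0$ by dominated convergence, dominated by $|w|^{p}$.
\item Each $w_k$ is bounded with compact support $K\subset\Omega$. Since $p$ is continuous, $p$ is bounded on $K$, so on $L^{p(\cdot)}(K)$ the modular satisfies $\Delta_2$ and Theorem~\ref{r-continuous-p^+L} applies.
\item There, standard density of $C_0^\infty$ in bounded-exponent spaces with continuous exponent on a slightly larger compact set gives mollified functions $\phi$ with $\rho_p(w_k-\phi)$ small. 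Mollification of bounded compactly supported functions converges a.e.\ and boundedly, and $p$ is bounded there, so dominated convergence suffices.
\end{enumerate}
Thus $w\in\overline{C_0^\infty}^{\rho}$.

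For a countable set I would take rational linear combinations of indicator functions of rational cubes compactly contained in $\Omega$; bounded convergence on compacts with bounded $p$ again yields modular convergence. The closure of this countable set then contains the simple compactly supported bounded functions, hence the $w_k$, hence all of $L^{p(\cdot)}(\Omega)$ by the same two-stage closure argument.

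The main obstacle, as noted, is elements of infinite modular. I expect to resolve it throughout by using the closed-subspace property of Proposition~\ref{subspace} rather than direct sequences.
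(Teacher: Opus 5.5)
Your proposal is correct and follows essentially the paper's route: approximate elements of finite modular by truncations, pass to arbitrary elements by scaling and the closed-subspace property of Proposition~\ref{subspace}, and get countability from finitely supported rational sequences via idempotence of the closure. The subspace step is genuinely needed, as you note, since an element such as $2\mathbf s$ is not a sequential limit of finite-modular elements. The only difference is in the function case. There the paper uses the sublevel sets $\{p<M\}$ and appeals to \cite[Theorem~2.11 and Corollary~2.12]{KR}, whereas you use a self-contained argument (compact exhaustion, bounded truncation, mollification with dominated convergence) plus an explicit countable family of rational step functions on rational cubes. Your version is equally valid and slightly more self-contained.
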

\begin{proof}
Denote the modular closure of ${\mathbf c}_{00}$ by
$\overline{{\mathbf c}_{00}}^{\rho_{\mathbf p}}$. Then
$\overline{{\mathbf c}_{00}}^{\rho_{\mathbf p}}$ is a
$\rho_{\mathbf p}$-closed subspace of $\ell^{(p_n)}$.\\
Suppose first that
$\mathbf a\in\ell^{(p_n)}$ satisfies
$\rho_{\mathbf p}(\mathbf a)<\infty$. For every $\delta>0$,
there exists a finitely supported sequence
$\mathbf s\in {\mathbf c}_{00}$ such that
\[
\rho_{\mathbf p}(\mathbf a-\mathbf s)<\delta.
\]
Thus every modular ball centered at $\mathbf a$ intersects ${\mathbf c}_{00}$.
Since every $\tau_{\rho_{\mathbf p}}$-neighborhood of
$\mathbf a$ contains a modular ball centered at $\mathbf a$,
every $\tau_{\rho_{\mathbf p}}$-neighborhood of $\mathbf a$
intersects ${\mathbf c}_{00}$. Hence
\[
\mathbf a\in\overline{{\mathbf c}_{00}}^{\rho_{\mathbf p}}.
\]
Now let $\mathbf b\in\ell^{(p_n)}$ be arbitrary. By the definition
of $\ell^{(p_n)}$, there exists $\lambda>0$ such that
\[
\rho_{\mathbf p}(\lambda\mathbf b)<\infty.
\]
The preceding argument gives
$\lambda\mathbf b\in\overline{{\mathbf c}_{00}}^{\rho_{\mathbf p}}$.
Since $\overline{{\mathbf c}_{00}}^{\rho_{\mathbf p}}$ is a vector subspace and
$\lambda>0$, it follows that
\[
\mathbf b\in\overline{{\mathbf c}_{00}}^{\rho_{\mathbf p}}.
\]
Therefore ${\mathbf c}_{00}$ is $\tau_{\rho_{\mathbf p}}$-dense in
$\ell^{(p_n)}$. Since the finitely supported sequences with rational
coordinates form a countable  modularly dense subset of ${\mathbf c}_{00}$, it follows that
$(\ell^{(p_n)},\tau_{\rho_{\mathbf p}})$ is separable.\\
We now turn to $L^{p(\cdot)}(\Omega)$. Let
$f\in L^{p(\cdot)}(\Omega)$ satisfy
\[
\rho_p(f)
=
\int_\Omega |f(x)|^{p(x)}\,dx<\infty,
\]
and fix $\delta>0$. For $M>0$, set
\[
\Omega_M:=\{x\in\Omega:p(x)<M\}.
\]
Since $p\in C(\Omega)$, the set $\Omega_M$ is open. Moreover,
$\Omega_M\uparrow\Omega$ as $M\to\infty$. Since
$|f(\cdot)|^{p(\cdot)}\in L^1(\Omega)$, the dominated convergence
theorem yields
\[
\int_{\Omega\setminus\Omega_M}
|f(x)|^{p(x)}\,dx\longrightarrow0
\qquad\text{as }M\to\infty.
\]
Hence $M$ may be chosen sufficiently large so that
\[
\int_{\Omega\setminus\Omega_M}
|f(x)|^{p(x)}\,dx<\frac{\delta}{2}.
\]
On $\Omega_M$ the exponent $p$ is bounded. Therefore the classical
density result \cite[Theorem~2.11]{KR} applies, and there exists
$\phi\in C_0^\infty(\Omega_M)$ such that
\[
\int_{\Omega_M}
|f(x)-\phi(x)|^{p(x)}\,dx<\frac{\delta}{2}.
\]
Extending $\phi$ by zero outside $\Omega_M$, we obtain
$\phi\in C_0^\infty(\Omega)$. Since $\phi=0$ on
$\Omega\setminus\Omega_M$, we have
\[
\begin{aligned}
\rho_p(f-\phi)
&=
\int_{\Omega_M}
|f(x)-\phi(x)|^{p(x)}\,dx
+
\int_{\Omega\setminus\Omega_M}
|f(x)-\phi(x)|^{p(x)}\,dx\\
&=
\int_{\Omega_M}
|f(x)-\phi(x)|^{p(x)}\,dx
+
\int_{\Omega\setminus\Omega_M}
|f(x)|^{p(x)}\,dx\\
&<\frac{\delta}{2}+\frac{\delta}{2}
=\delta.
\end{aligned}
\]
Thus
\[
B_{\rho_p,\delta}(f)\cap C_0^\infty(\Omega)\neq\emptyset.
\]
Since $\delta>0$ was arbitrary, it follows that
$f$ belongs to the modular closure of $C_0^\infty(\Omega)$.

Finally, let $g\in L^{p(\cdot)}(\Omega)$ be arbitrary. By the
definition of $L^{p(\cdot)}(\Omega)$, there exists $\lambda>0$
such that
\[
\rho_p(\lambda g)
=
\int_\Omega |\lambda g(x)|^{p(x)}\,dx<\infty.
\]
By the preceding argument, $\lambda g$ belongs to the modular closure
of $C_0^\infty(\Omega)$. Since this closure is a vector subspace,
it follows that $g$ also belongs to it. Consequently,
\[
\overline{C_0^\infty(\Omega)}^{\,\rho_p}
=
L^{p(\cdot)}(\Omega).
\]
Since $p$ is bounded on every compact subset of $\Omega$, the separability of $L^{p(\cdot)}(\Omega)$ in the modular topology
then follows from \cite[Corollary~2.12]{KR} by a straightforward exhaustion argument.
\end{proof}

\smallskip

\begin{remark}
{\normalfont
To the authors' best knowledge, Theorem~\ref{separability} is new. It is known that the norm
topologies of both $\ell^{(p_n)}$ and $L^{p(\cdot)}(\Omega)$ are
separable when $p^+<\infty$ and nonseparable when $p^+=\infty$;
see~\cite{KR}. In contrast, Theorem~\ref{separability} shows that
the modular topology remains separable even when $p^+=\infty$.
}
\end{remark}

\begin{theorem}\label{dualimplication}
For either variable-exponent space, every $\tau_{\rho_p}$-continuous
linear functional is continuous for the Luxemburg norm.  Moreover, if
$(p_n)$ is unbounded, then
\[
(\ell^{(p_n)},\tau_{\rho_{\mathbf p}})^*
\subsetneq(\ell^{(p_n)},\|\cdot\|_{\rho_{\mathbf p}})^*.
\]
If $p\in C(\Omega)$ is finite and unbounded, the analogous strict
inclusion holds for $L^{p(\cdot)}(\Omega)$.
\end{theorem}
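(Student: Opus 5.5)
The inclusion is proved by comparing the two topologies, and strictness by exhibiting one explicit functional.

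\emph{Inclusion.} By the proof of Theorem~\ref{Mainequivalence}, $(iii)\Rightarrow(iv)$, we always have $\tau_\rho\subseteq\tau_{\|\cdot\|_\rho}$. That step only used Proposition~\ref{standard}$(iii)$, not $\Delta_2$: if $B_{\rho,\varepsilon}(x)\subset A$, then the norm ball of radius $\min\{\varepsilon,1/2\}$ about $x$ lies in $A$. Hence the identity $(X_\rho,\tau_{\|\cdot\|_\rho})\to(X_\rho,\tau_\rho)$ is continuous. Composing it with a $\tau_\rho$-continuous linear functional $\phi$ gives a norm-continuous linear functional. This yields the first assertion and the inclusion of duals for both $\ell^{(p_n)}$ and $L^{p(\cdot)}(\Omega)$.

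\emph{Strictness for sequences.} With $(p_n)$ unbounded, I would take the subsequence $(n_k)$ and the element $\mathbf s=\mathbbm 1_{\{n_1,n_2,\dots\}}$ from Theorem~\ref{modularballsnotopen}, and aim for a norm-continuous $\phi$ with $\phi(\mathbf c_{00})=0$ but $\phi(\mathbf s)\neq0$.

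First, on the subspace $\ell^\infty$-type elements supported on $\{n_k\}$, the Luxemburg norm is equivalent to the sup norm. For $|a_{n_k}|\le 1/2$ we have $\sum_k 2^{-p_{n_k}}<\infty$, so after scaling the norm is bounded above by $C\sup_k|a_{n_k}|$. Conversely, $\|a\|_\rho\ge\sup_k|a_{n_k}|$ because $\rho(a/\lambda)\le1$ forces each term to satisfy $|a_{n_k}/\lambda|\le1$.

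Second, I would define $\phi$ by composing the norm-bounded projection onto the coordinates $\{n_k\}$ (restriction decreases $\rho$, hence the norm) with a Banach limit $\mathrm{LIM}$ applied to $(a_{n_k})_k$. This gives $|\phi(a)|\le\|a\|_\rho$, $\phi=0$ on $\mathbf c_{00}$, and $\phi(\mathbf s)=1$.

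Third, $\phi$ is not $\tau_\rho$-continuous. By Theorem~\ref{separability}, $\mathbf c_{00}$ is $\tau_\rho$-dense. A $\tau_\rho$-continuous $\phi$ vanishing on a dense set has $\phi^{-1}(0)$ closed and dense, so it vanishes identically, contradicting $\phi(\mathbf s)=1$. If one wants to avoid a Banach limit, it suffices to take any Hahn--Banach extension of a functional on $\mathbf c_{00}+\mathbb R\mathbf s$ vanishing on $\mathbf c_{00}$. This needs $\mathbf s$ at positive norm distance from $\mathbf c_{00}$, which holds since $\rho(\lambda(\mathbf s-\mathbf t))=\infty$ for $\lambda\ge1$ and $\mathbf t\in\mathbf c_{00}$, so $\|\mathbf s-\mathbf t\|_\rho\ge1$.

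\emph{Strictness for functions.} In $L^{p(\cdot)}(\Omega)$ with $p\in C(\Omega)$ finite and unbounded, the same scheme applies. Theorem~\ref{separability} gives $\tau_\rho$-density of $C_0^\infty(\Omega)$. Take $v$ from Theorem~\ref{general-unbounded-example} (or the function with $\rho_p(2u_N)=\infty$ from Theorem~\ref{r-continuous-p^+L}), and show $\operatorname{dist}_{\|\cdot\|_\rho}(v,C_0^\infty(\Omega))>0$; Hahn--Banach then gives the separating norm-continuous functional.

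\emph{Main obstacle.} The key step is this distance estimate. Since $p$ is continuous and finite, every $\phi\in C_0^\infty$ has compact support on which $p$ is bounded. Meanwhile $v$ is built on the sets $E_j\subset\{p\ge k_j\}$, which I would choose to escape every compact set. This is possible because $p$ is bounded on compacts, so $\{p\ge k\}$ eventually leaves each compact. Then $\phi=0$ on all but finitely many $E_j$, and so $\rho(\lambda(v-\phi))\ge\rho(\lambda v\mathbbm 1_{\bigcup_{j>N}E_j})=\infty$ for every $\lambda\ge1$, because $\rho_p(v)=\infty$ is carried by the tail. This gives $\|v-\phi\|_\rho\ge1$. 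The delicate point is checking that the tail, rather than just the full sum, has infinite modular. This follows from the construction, since $\sum_{j>N}k_j=\infty$.
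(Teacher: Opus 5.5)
Your proposal is correct and follows the paper's proof. The inclusion comes from $\tau_\rho\subseteq\tau_{\|\cdot\|_\rho}$. Strictness comes from a norm-continuous (Hahn--Banach) functional that vanishes on the $\tau_\rho$-dense subspace $\mathbf c_{00}$ (resp.\ $C_0^\infty(\Omega)$) but not at $\mathbf s$ (resp.\ $v$), which lie at Luxemburg distance at least $1$ from that subspace. Your Banach-limit functional $a\mapsto\mathrm{LIM}_k\, a_{n_k}$, bounded via $\|a\|_\infty\le\|a\|_\rho$, is just a concrete instance of the same Hahn--Banach step.
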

\begin{proof}
Since every $\tau_\rho$-open set is norm open it follows that any
$\tau_\rho$-continuous linear functional is norm continuous.

For the sequence space, let $\mathbf s$ be the sequence used in
Theorem~\ref{modularballsnotopen}.  For every
$\mathbf a\in\mathbf c_{00}$ one has
$\rho_{\mathbf p}(\mathbf s-\mathbf a)=\infty$, so
$\|\mathbf s-\mathbf a\|_{\rho_{\mathbf p}}>1$ by Proposition
\ref{standard}$(ii)$.  Thus the norm closure of $\mathbf c_{00}$ is a
proper closed subspace.  Hahn--Banach supplies a nonzero norm-continuous
functional that vanishes on $\mathbf c_{00}$.  It cannot be
$\tau_{\rho_{\mathbf p}}$-continuous because $\mathbf c_{00}$ is
$\tau_{\rho_{\mathbf p}}$-dense by Theorem~\ref{separability}.

For the function space, use $v$ from
Theorem~\ref{general-unbounded-example}.  If
$h\in C_0^\infty(\Omega)$, continuity of $p$ bounds the exponent on
$\operatorname{supp}h$; all sufficiently high level sets $E_j$ in the
construction of $v$ therefore miss that support.  Consequently
$\rho_p(v-h)=\infty$ and $\|v-h\|_{\rho_p}>1$.  The norm closure of
$C_0^\infty(\Omega)$ is proper, and the same Hahn--Banach and density
argument proves strictness.
\end{proof}

The preceding results also yield a stronger conclusion concerning
modular balls. In the unbounded-exponent case, modular balls not only
fail to be open; they have empty interior in the modular topology.

\begin{corollary}\label{emptyinterior}
If the sequence $\mathbf p=(p_n)$ is unbounded, i.e., $p^+=\infty$,
then every modular ball in $\ell^{(p_n)}$ has empty
$\tau_{\rho_{\mathbf p}}$-interior.\\
Analogously, let $\Omega\subset\mathbb{R}^n$ be a bounded domain and let
$p\in C(\Omega)$ satisfy
\[
1\leq p(x)<\infty
\qquad\text{for every }x\in\Omega,
\]
with $p^+=\infty$. Then every modular ball in
$L^{p(\cdot)}(\Omega)$ has empty interior in the modular topology
$\tau_{\rho_p}$.
\end{corollary}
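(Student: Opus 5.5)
By translation invariance (Proposition~\ref{basic-properties}(1)) it suffices to show that no point of a ball $B:=B_{\rho,\delta}(z)$ is an interior point, where $\rho$ denotes $\rho_{\mathbf p}$ or $\rho_p$. So fix $z$, $\delta>0$, and suppose some $\tau_\rho$-open set $\mathcal O$ satisfies $\mathcal O\subset B$. Then $\mathcal O\neq\emptyset$, and by Corollary~\ref{infinitediameter} it has infinite modular diameter.

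On the other hand, for $a,b\in\mathcal O\subset B$, convexity gives
\[
\rho\Bigl(\tfrac{a-b}{2}\Bigr)\le \tfrac12\rho(a-z)+\tfrac12\rho(z-b)<\delta .
\]
This bounds the diameter only for the halved differences, so it does not by itself contradict infinite diameter. The fix is to apply Corollary~\ref{infinitediameter} to the open set $\tfrac12\mathcal O$ instead of $\mathcal O$. The difficulty is that Proposition~\ref{basic-properties}(2) only gives openness of $\alpha\mathcal O$ for $\alpha\ge1$.

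\textbf{Main obstacle and first attempt.} I would instead rerun the proof of Corollary~\ref{infinitediameter} at half scale. In the sequence case, take a point $w\in\mathcal O$ and translate so that $w$ becomes $\mathbf s/2$, i.e.\ replace $\mathcal O$ by $\mathcal O+\mathbf s/2-w$, which lies in $B_{\rho,\delta}(z')$ with $z'=z+\mathbf s/2-w$. The argument of Theorem~\ref{modularballsnotopen} applied to $\mathbf s/2$ yields two points of this set:
\begin{itemize}
\item[] $(1-\varepsilon)\mathbf s/2$, and
\item[] truncations $\mathbf t=\mathbf s^{(N)}/2$ of it, which lie in $\mathcal O$ because $\mathcal O$ is open about $(1-\varepsilon)\mathbf s/2$.
\end{itemize}
Now $(\mathbf s/2-\mathbf t)/2$ has infinitely many coordinates equal to $1/4$. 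Since $\sum 4^{-p_{n_k}}<\infty$, this does \emph{not} give infinity, so I would rescale the construction.

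Choose $\mathbf s'=4\mathbf s$. Then $\mathbf s'\in\ell^{(p_n)}$, and $\rho((1-\varepsilon)\mathbf s'-\mathbf s')=\rho(4\varepsilon\mathbf s)\to0$ as $\varepsilon\to0$. The same truncation argument produces $\mathbf t\in\mathcal O$ with $(\mathbf s'-\mathbf t)/2$ having infinitely many coordinates equal to $2$. Hence
\[
\rho\Bigl(\tfrac{\mathbf s'-\mathbf t}{2}\Bigr)=\infty ,
\]
contradicting the displayed bound $<\delta$.

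For $L^{p(\cdot)}$ the plan is identical, with $v$ from Theorem~\ref{general-unbounded-example} replaced by $4v$. Then $\rho_p(\lambda\cdot 4v)<\infty$ for $\lambda<1/4$ and $\rho_p(4\varepsilon v)\to0$ as $\varepsilon\to0$. The truncations $4\varepsilon' v_N$ (with $\varepsilon'=1-\varepsilon$) converge modularly to $4\varepsilon' v$, using the estimate $\sum\lambda^{k_j}k_j^2$ with $\lambda=4\varepsilon'\varepsilon''$ for small scaling. The half-difference equals $2v$ on the tail sets $E_j$, and $\rho_p(2v)\ge\rho_p(v)=\infty$ there by monotonicity in the scale.

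The points needing care are two. First, check that the truncations still converge modularly after the rescaling by $4$, i.e.\ choose the inner scale small enough that the dominating series converges. Second, check that the relevant point lies in the translated open set; this follows because open sets contain a modular ball about each of their points, in particular about $(1-\varepsilon)\mathbf s'$.
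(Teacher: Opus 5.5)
\textbf{The gap.} The step that fails is ``the same truncation argument produces $\mathbf t\in\mathcal O$'' once you replace $\mathbf s$ by $\mathbf s'=4\mathbf s$. In Theorem~\ref{modularballsnotopen} the truncations work only because $(1-\varepsilon)\mathbf s$ has coordinates $1-\varepsilon<1$. Its modular is therefore finite and its tails are modularly small. Your point $(1-\varepsilon)\mathbf s'=4(1-\varepsilon)\mathbf s$ has coordinates $4(1-\varepsilon)>1$ at every $n_k$. Hence, for every truncation $\mathbf t$ of it, $\rho\bigl((1-\varepsilon)\mathbf s'-\mathbf t\bigr)=\sum_{k>N}\bigl(4(1-\varepsilon)\bigr)^{p_{n_k}}=\infty$, and no truncation lies in any modular ball about $(1-\varepsilon)\mathbf s'$.

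Tuning parameters cannot fix this. If $\rho(\mathbf y-\mathbf s')<\eta\le 1$, then $|y_{n_k}-4|<1$ for all $k$. So every point of a small modular ball around $\mathbf s'$ has coordinates $>3$ on $\{n_k\}$ and meets the same obstruction. The ``inner scale'' you invoke is not available either. $\mathcal O$ is $\tau_\rho$-open, so you need $\rho$-convergence of the truncations. Convergence for $\rho_\lambda$ with $\lambda<1$ only concerns the weaker topology $\tau_\lambda$ and puts nothing into $\rho$-balls. The $L^{p(\cdot)}$ version fails in the same way: $\rho_p\bigl(\lambda(v-v_N)\bigr)\ge\sum_{j>N}k_j=\infty$ whenever $\lambda\ge 1$, in particular for $\lambda=4\varepsilon'$.

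\textbf{How the paper proceeds, and how to repair your route.} The paper avoids this by density. By Theorem~\ref{separability}, $\mathbf c_{00}$ (resp.\ $C_0^\infty(\Omega)$) is $\tau_\rho$-dense. On the other hand, $\rho(\mathbf s-\mathbf a)=\infty$ for every $\mathbf a\in\mathbf c_{00}$. Likewise $\rho_p(v-h)=\infty$ for every $h\in C_0^\infty(\Omega)$, because $p$ is bounded on $\operatorname{supp}h$ and the sets $E_j$ eventually miss it. So every nonempty open set meets the complement of $B_{\rho,\delta}(\mathbf s)$ (resp.\ $B_{\rho_p,\delta}(v)$), and translation handles arbitrary centers.

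If you want to keep your convexity bound $\rho((a-b)/2)<\delta$, the missing idea is to take several small steps instead of one rescaled step. For any $y$ in an open set $\mathcal O$, the displacement $d_N=\tfrac34\,\mathbf s\,\mathbbm 1_{\{n_k:\,k>N\}}$ satisfies $\rho(d_N)\le\sum_{k>N}(3/4)^k\to 0$, independently of $y$. Hence $y-d_N\in\mathcal O$ for $N$ large. Three such steps starting from $w\in\mathcal O$ give $w_3\in\mathcal O$ with $(w-w_3)/2$ equal to $9/8$ on a tail of $\{n_k\}$. Then $\rho((w-w_3)/2)=\infty$, which contradicts your bound. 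The same works in $L^{p(\cdot)}(\Omega)$ with $d_N=\tfrac34(v-v_N)$, using $\rho_p(d_N)\le\sum_{j>N}(3/4)^{k_j}k_j^2\to0$ and $(w-w_3)/2\ge\tfrac98(v-v_M)\ge0$. This route needs neither density nor continuity of $p$.
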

\begin{proof}
It was shown in Theorem \ref{general-unbounded-example} that for any $\delta>0$, the modular ball $B_{\rho_{\mathbf{p}},\delta}(\mathbf{s})$ in $\ell^{(p_n)}$, has no interior point $\mathbf{x}$ with $\rho_{\mathbf{p}}(\mathbf{x})<\infty.$ Select $\mathbf{y}\in B_{\rho_{\mathbf{p}},\delta}({\mathbf {s}})$. On account of Theorem \ref{separability}, any $\rho_{\mathbf{p}}$-open set $A$ containing $\mathbf{y}$ must contain a sequence with only finitely many nonzero terms. Pick one such sequence, say $\mathbf{x}$; then  $\mathbf{x}$ is not an interior point of $B_{\rho_{\mathbf{p}},\delta}(\mathbf{s})$, i.e., some point of $A$ must be in the complement of $B_{\rho_{\mathbf{p}},\delta}(\mathbf{s})$. It follows that $B_{\rho_{\mathbf{p}},\delta}(\mathbf{s})$ has empty interior. On account of Proposition \ref{basic-properties} (1), any ball
$B_{\rho_{\mathbf{p}},\delta}({\mathbf x})=\mathbf{x}-\mathbf{s}+B_{\rho_{\mathbf{p}},\delta}(\mathbf{s})$ must have empty interior.\\
We move now to the continuous case. Let $v$ be the function constructed in Theorem \ref{general-unbounded-example} and fix
$\delta>0$. We first show that $B_{\rho_p,\delta}(v)$ has empty interior.  Let  $g\in B_{\rho_p,\delta}(v)$  and let $A$ be any $\tau_{\rho_p}$-open set containing $g$.
By Theorem \ref{separability}, $C_0^\infty(\Omega)$ is dense in
$(L^{p(\cdot)}(\Omega),\tau_{\rho_p})$. Hence there exists
\[
h\in A\cap C_0^\infty(\Omega).
\]
Since $h$ has compact support and $p\in C(\Omega)$, the exponent $p$
is bounded on $\operatorname{supp} h$. On the other hand, in the
construction of $v$ in Theorem \ref{general-unbounded-example}, the sets
\[
E_j=\{x\in\Omega:k_j\leq p(x)<k_j+1\}
\]
satisfy $k_j\to\infty$. Therefore, for all sufficiently large $j$,  $E_j\cap\operatorname{supp} h=\varnothing$. Consequently, $h=0$ on $E_j$ for all sufficiently large $j$, and hence
\[
\begin{aligned}
\rho_p(v-h)
&\geq
\sum_{j\geq j_0}
\frac{1}{|E_j|}
\int_{E_j}
k_j^{p(x)/k_j}\,dx\\
&\geq
\sum_{j\geq j_0} k_j
=
\infty.
\end{aligned}
\]
Thus
\[
h\notin B_{\rho_p,\delta}(v).
\]
Since every $\tau_{\rho_p}$-open neighborhood $A$ of every
$g\in B_{\rho_p,\delta}(v)$ contains a point outside
$B_{\rho_p,\delta}(v)$, the modular ball
$B_{\rho_p,\delta}(v)$ has empty interior.\\
Finally, for any $w\in L^{p(\cdot)}(\Omega)$,
\[
B_{\rho_p,\delta}(w)
=
w-v+B_{\rho_p,\delta}(v).
\]
By Proposition~\ref{basic-properties}(1), translations are homeomorphisms of the modular
topology. Hence every modular ball of radius $\delta$ has empty
interior. Since $\delta>0$ was arbitrary, every modular ball in
$L^{p(\cdot)}(\Omega)$ has empty interior.
\end{proof}

\smallskip

\begin{remark}\label{closedballsemptyinterior}
{\normalfont
Assume that $(p_j)$ is unbounded ($p^+=\infty$) and $p\in C(\Omega)$ is unbounded. The same argument used in the proof of the preceding corollary shows that every closed
modular ball in either $\ell^{(p_n)}$ or $L^{p(\cdot)}(\Omega)$,
\[
\overline{B}_{\rho_{\mathbf p},\delta}(\mathbf t)
:=
\left\{
\mathbf y\in\ell^{(p_n)}:
\rho_{\mathbf p}(\mathbf t-\mathbf y)\leq\delta
\right\}
\]
or 
\[
\overline{B}_{\rho_{\mathbf p},\delta}(\mathbf t)
:=
\left\{
\mathbf y\in L^{p(\cdot)}(\Omega):
\rho_{\mathbf p}(\mathbf t-\mathbf y)\leq\delta
\right\}
\]
has empty interior in the modular topology. We present the proof for $\ell^{(p_n)}$, the continuous case being similarly handled. For any $\delta>0$, let $\mathbf s$ be the sequence introduced
in Theorem~\ref{modularballsnotopen}. By the density result in
Theorem~\ref{separability}, every nonempty open set intersects ${\mathbf c}_{00}$. On the other hand, by
the construction of $\mathbf s$,
\[
\rho_{\mathbf p}(\mathbf s-\mathbf z)=\infty
\qquad\text{for every }\mathbf z\in {\mathbf c}_{00}.
\]
Consequently,
\[
{\mathbf c}_{00}\cap\overline{B}_{\rho_{\mathbf p},\delta}(\mathbf s)=\varnothing,
\]
and hence
$\overline{B}_{\rho_{\mathbf p},\delta}(\mathbf s)$ has empty interior.
Since every modular ball is a translate of a modular ball centered at
$\mathbf s$, the same conclusion holds for every
$\overline{B}_{\rho_{\mathbf p},\delta}(\mathbf t)$.
}
\end{remark}

\smallskip

\begin{corollary}\label{firstcategoryl}
If the sequence $(p_n)$ is unbounded, the modular topological space $\left (\ell^{(p_n)},\tau_{\rho_{{\mathbf p}}}\right)$ is of the first category. Likewise, if $\Omega\subseteq {\mathbb R}^n$ is an open set and $p\in C(\Omega)$ is unbounded and $1\leq p(x)<\infty$, then $(L^{p(\cdot)}(\Omega),\tau_{\rho_p})$ is of the first category.
\end{corollary}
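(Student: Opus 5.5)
The plan is to write the whole space as a countable union of closed sets with empty interior in the modular topology. For $\ell^{(p_n)}$, fix the sequence $\mathbf s$ of Theorem~\ref{modularballsnotopen}, or any fixed center. Then
\[
\ell^{(p_n)}=\bigcup_{j\ge1} j\,\overline{B}_{\rho_{\mathbf p},1}(0),
\]
where $\overline{B}_{\rho_{\mathbf p},1}(0)=\{\mathbf y:\rho_{\mathbf p}(\mathbf y)\le 1\}$. The equality holds because every $\mathbf b\in\ell^{(p_n)}$ satisfies $\rho_{\mathbf p}(\lambda\mathbf b)<\infty$ for some $\lambda>0$. Convexity together with $\rho(0)=0$ then gives $\rho_{\mathbf p}(\mu\mathbf b)\le1$ for $\mu$ small enough, so $\mathbf b\in j\overline{B}_{\rho_{\mathbf p},1}(0)$ once $j\ge 1/\mu$.

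Each set $j\overline{B}_{\rho_{\mathbf p},1}(0)$ is modularly closed by Lemma~\ref{closedballsclosed} combined with Corollary~\ref{multiplesofclosedballs}. It remains to check that it has empty interior. The dilation $x\mapsto jx$ is a homeomorphism of $\tau_{\rho_{\mathbf p}}$: by Proposition~\ref{tau^i-algebra}'s argument, or more directly by Proposition~\ref{basic-properties}(2) applied to both $j$ and, for the inverse, to the complement of closed sets as in Corollary~\ref{multiplesofclosedballs}, $j$ maps open sets to open sets. So if $jC$ contained a nonempty open set $U$, we would want $j^{-1}U$ to be an open subset of $C$.

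The main obstacle sits here, because Proposition~\ref{basic-properties}(2) only handles dilations by $\alpha\ge1$. I would avoid it as follows. Since $\mathbf c_{00}$ is a subspace that is dense by Theorem~\ref{separability}, any open $U\subset jC$ contains some $\mathbf x\in\mathbf c_{00}$. I would then reproduce the argument of Remark~\ref{closedballsemptyinterior} directly on the closed ball $j\overline{B}_{\rho_{\mathbf p},1}(0)$, translated so that it is centered at a point with infinite modular distance from $\mathbf c_{00}$.

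The cleaner route is to replace the centered balls by translates, noting that interior is translation invariant by Proposition~\ref{basic-properties}(1). Emptiness of interior for $j\overline{B}_{\rho,1}(0)$ is equivalent to emptiness of interior for $\mathbf s + j\overline{B}_{\rho,1}(0)=\{\mathbf y:\rho_{\mathbf p}((\mathbf y-\mathbf s)/j)\le1\}$. Since $\rho_{\mathbf p}(\mathbf s/j-\mathbf a)$ is infinite for every $\mathbf a\in\mathbf c_{00}$ when $j\ge1$ (the tail of $\mathbf s/j$ has infinitely many entries $1/j$, and their sum $\sum_k j^{-p_{n_k}}$ diverges only if $j=1$), I should choose the center more carefully. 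I would take instead $\mathbf s_j := j\,\mathbf s$. Then for $\mathbf y\in\mathbf c_{00}$, $(\mathbf y-\mathbf s_j)/j=\mathbf y/j-\mathbf s$ has infinitely many coordinates equal to $-1$, so $\rho_{\mathbf p}=\infty>1$. Hence $\mathbf s_j+j\overline{B}_{\rho,1}(0)$ misses the dense set $\mathbf c_{00}$, so it has empty interior, and by translation invariance so does $j\overline{B}_{\rho,1}(0)$. This gives first category.

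For $L^{p(\cdot)}(\Omega)$ the argument is identical. We use Lemma~\ref{closedballsclosed} for closedness, the density of $C_0^\infty(\Omega)$ from Theorem~\ref{separability}, and the center $j v$ with $v$ from Theorem~\ref{general-unbounded-example}. For $h\in C_0^\infty(\Omega)$, continuity of $p$ makes $p$ bounded on $\operatorname{supp}h$, so $h/j-v=-v$ on all but finitely many $E_j$. That gives $\rho_p(h/j-v)=\infty$, exactly as in Corollary~\ref{emptyinterior}. The only point needing care is that the $E_j$ eventually miss compact supports, which is already established there.
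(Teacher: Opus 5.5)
Your final argument is correct and is essentially the paper's proof: you cover the space by the sets $j\overline{B}_{\rho,1}(0)$, which are closed by Lemma~\ref{closedballsclosed} and Corollary~\ref{multiplesofclosedballs}, and you show each has empty interior because its translate $j\mathbf s+j\overline{B}_{\rho_{\mathbf p},1}(0)=j\overline{B}_{\rho_{\mathbf p},1}(\mathbf s)$ misses the dense core $\mathbf c_{00}$ (respectively, the translate centered at $jv$ misses $C_0^\infty(\Omega)$). Do delete the aside claiming that $x\mapsto jx$ is a $\tau_{\rho_{\mathbf p}}$-homeomorphism: Proposition~\ref{basic-properties}(2) only makes it an open map, and its continuity for $j=2$ is exactly the $\Delta_2$-condition, which fails here, although your final argument never uses this claim.
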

\begin{proof}

For either modular space,
\begin{equation}\label{union}
X_\rho=\bigcup_{j=1}^{\infty}j\overline{B}_{\rho,1}(0),
\end{equation}
because every $x\in X_\rho$ satisfies $\rho(x/j)\le1$ for some integer
$j$. Each member of the union is modularly closed by Corollary
\ref{multiplesofclosedballs}.

In the sequence case, let $\mathbf s$ be as in Theorem
\ref{modularballsnotopen}. The dense core $\mathbf c_{00}$ misses
$j\overline B_{\rho_{\mathbf p},1}(\mathbf s)$, because
$\rho_{\mathbf p}(\mathbf a/j-\mathbf s)=\infty$ for every
$\mathbf a\in\mathbf c_{00}$. Hence this closed set, and therefore its
translate $j\overline B_{\rho_{\mathbf p},1}(0)$, has empty interior.

In the function case, use $v$ from
Theorem~\ref{general-unbounded-example} and the dense core
$C_0^\infty(\Omega)$. For every $h$ in that core,
$\rho_p(h/j-v)=\infty$: continuity of $p$ bounds it on
$\operatorname{supp}h$, while the tail of $v$ lies on arbitrarily high
exponent levels. Thus $j\overline B_{\rho_p,1}(v)$ and its translate
$j\overline B_{\rho_p,1}(0)$ have empty interior. Equation
\eqref{union} proves that each space is of first category.
\end{proof}\medskip
\begin{theorem}\label{lphausdorff}
The modular topology $\tau_{\rho_{\mathbf p}}$ on the variable exponent
space $\ell^{(p_n)}$ is Hausdorff, even when the sequence $(p_n)$ is
unbounded. 
\end{theorem}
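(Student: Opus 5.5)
Modular balls cannot serve as separating neighborhoods here, since by Theorem~\ref{modularballsnotopen} none of them is $\tau_{\rho_{\mathbf p}}$-open when $p^+=\infty$. The plan is instead to separate points by preimages of open intervals under the coordinate functionals, which turn out to be genuinely $\tau_{\rho_{\mathbf p}}$-open.

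\emph{Step 1: coordinate slabs are open.} For $n\in\mathbb N$, $c\in\mathbb R$ and $r>0$ put
\[
U_n(c,r):=\{\mathbf z\in\ell^{(p_n)}:\ |z_n-c|<r\}.
\]
I verify Definition~\ref{tau-open} directly. Fix $\mathbf z\in U_n(c,r)$ and set $\eta:=r-|z_n-c|>0$. Choose $\varepsilon>0$ with $\varepsilon^{1/p_n}<\eta$. If $\mathbf w\in B_{\rho_{\mathbf p},\varepsilon}(\mathbf z)$, then, since every term of the series defining $\rho_{\mathbf p}$ is nonnegative,
\[
|w_n-z_n|^{p_n}\le\rho_{\mathbf p}(\mathbf w-\mathbf z)<\varepsilon ,
\]
so $|w_n-z_n|<\varepsilon^{1/p_n}<\eta$. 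The triangle inequality then gives $|w_n-c|<r$. Hence $B_{\rho_{\mathbf p},\varepsilon}(\mathbf z)\subset U_n(c,r)$, and $U_n(c,r)$ is $\tau_{\rho_{\mathbf p}}$-open.

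Equivalently, each coordinate map $\pi_n$ is sequentially continuous from $(\ell^{(p_n)},\tau_{\rho_{\mathbf p}})$ to $\mathbb R$. One could therefore also obtain Step 1 through Theorem~\ref{convergencecharacterization} and the sequential-closure characterization of closed sets in Proposition~\ref{rho-topology}; the direct ball argument above is simpler.

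\emph{Step 2: separation.} Let $\mathbf x\neq\mathbf y$ in $\ell^{(p_n)}$. Then there is an index $n$ with $x_n\neq y_n$. Put $r:=|x_n-y_n|/2$. The sets $U_n(x_n,r)$ and $U_n(y_n,r)$ are disjoint, because their $n$-th coordinates lie in disjoint intervals. By Step~1 both sets are $\tau_{\rho_{\mathbf p}}$-open, and they contain $\mathbf x$ and $\mathbf y$ respectively. Thus $\tau_{\rho_{\mathbf p}}$ is Hausdorff.

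The only point needing care is Step~1, and specifically that the radius $\varepsilon$ may depend on $n$ through $p_n$. This is harmless, because only a single fixed coordinate $n$ is used, so the unboundedness of $(p_n)$ never enters the argument. That is exactly why the conclusion survives the failure of $\Delta_2$.
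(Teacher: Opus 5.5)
Your proof is correct and takes essentially the paper's approach: the paper separates points with the same coordinate slabs $U^M_{x,\varepsilon}=\{y:|x_M-y_M|<\varepsilon\}$ and picks an index where the two points differ, exactly as you do. The only difference is that the paper proves the slabs are open by showing their complements are closed under $\rho_{\mathbf p}$-limits, whereas you exhibit a modular ball of radius $\varepsilon$ with $\varepsilon^{1/p_n}<\eta$ inside the slab directly, which is slightly shorter.
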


\begin{proof}
Fix $M\in\mathbb{N}$. For $x=(x_n)\in\ell^{(p_n)}$ and
$\varepsilon>0$, set
\[
U^M_{x,\varepsilon}
:=
\left\{
y=(y_n)\in\ell^{(p_n)}:
|x_M-y_M|<\varepsilon
\right\}.
\]
We first show that $U^M_{x,\varepsilon}$ is
$\tau_{\rho_{\mathbf p}}$-open. By the characterization of closed sets
in the modular topology, it is enough to prove that
$\ell^{(p_n)}\setminus U^M_{x,\varepsilon}$ is
$\tau_{\rho_{\mathbf p}}$-closed.

Let $(\psi^n)\subset\ell^{(p_n)}\setminus U^M_{x,\varepsilon}$ and
assume that
\[
\psi^n\xrightarrow{\rho_{\mathbf p}}\psi.
\]
Then, for every $\delta>0$, there exists $N\geq1$ such that, for
$n>N$,
\[
\delta^{p_M}
>
\rho_{\mathbf p}(\psi^n-\psi)
=
\sum_{j=1}^{\infty}
|\psi_j^n-\psi_j|^{p_j}
\geq
|\psi_M^n-\psi_M|^{p_M}.
\]
Hence, for $n>N$,
\[
|\psi_M-x_M|
\geq
|\psi_M^n-x_M|-|\psi_M^n-\psi_M|
\geq
\varepsilon-\delta.
\]
Since $\delta>0$ is arbitrary, it follows that
\[
|\psi_M-x_M|\geq\varepsilon.
\]
Thus,
\[
\psi\in\ell^{(p_n)}\setminus U^M_{x,\varepsilon},
\]
and therefore $U^M_{x,\varepsilon}$ is
$\tau_{\rho_{\mathbf p}}$-open.

Now let $a,b\in\ell^{(p_n)}$ with $a\neq b$. Then
$a_M\neq b_M$ for some $M\in\mathbb{N}$. Setting
\[
\varepsilon=\frac{|a_M-b_M|}{2},
\]
the sets $U^M_{a,\varepsilon}$ and $U^M_{b,\varepsilon}$ are disjoint
$\tau_{\rho_{\mathbf p}}$-open neighborhoods of $a$ and $b$,
respectively. Hence $\tau_{\rho_{\mathbf p}}$ is Hausdorff.\\

\end{proof}
The following discussion prepares the ground for the analog of Theorem \ref{lphausdorff} in the continuous case.

\begin{lemma}\label{lqconvergence}
If $\Omega\subset {\mathbb R}^n$ is bounded, $q(x)\leq p(x)$ in $\Omega$ and $L^{p(\cdot)}(\Omega) \supset (u_j)\overset{\rho_p}{\rightarrow}u$, then there is a subsequence of $(u_j)$, say $(u_{j_k})$ that $\rho_q$ converges to $u$, that is 
\begin{equation*}
\int\limits_{\Omega}|u_{j_k}-u|^qdx\rightarrow 0\,\,\text{as}\,\,k\rightarrow \infty.
\end{equation*}
\end{lemma}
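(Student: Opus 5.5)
Write $w_j:=u_j-u$, so that $\rho_p(w_j)=\int_\Omega|w_j|^{p(x)}dx\to0$. The goal is a subsequence with $\int_\Omega|w_{j_k}|^{q(x)}dx\to0$. The idea is to split $\Omega$ according to the size of $|w_j|$. Where $|w_j|\le1$, the pointwise inequality $q\le p$ is in our favour only in the wrong direction: there $|w_j|^{q}\ge|w_j|^{p}$. Where $|w_j|\ge1$, we have $|w_j|^{q}\le|w_j|^{p}$. So the proof has two parts: (a) the set $\{|w_j|\ge1\}$ is controlled directly by $\rho_p(w_j)$, and (b) the set $\{|w_j|<1\}$ is controlled by an a.e.\ convergence plus dominated convergence argument. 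Part (b) is where the subsequence and the boundedness of $\Omega$ enter.

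\textbf{Step 1 (large values).} On $A_j:=\{x:|w_j(x)|\ge1\}$ we have $|w_j|^{q(x)}\le|w_j|^{p(x)}$, since $q\le p$ and the base is at least $1$. Hence
\[
\int_{A_j}|w_j|^{q(x)}dx\le\rho_p(w_j)\to0 ,
\]
with no subsequence needed.

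\textbf{Step 2 (a.e.\ convergence).} We extract a subsequence $w_{j_k}\to0$ a.e. This is the argument already sketched in the proof of Lemma~\ref{closedballsclosed}. For $0<\eta<1$, on $\{|w_j|\ge\eta\}$ we have $|w_j|^{p(x)}\ge\eta^{p(x)}$. Then
\[
\bigl|\{|w_j|\ge\eta\}\cap\{p\le M\}\bigr|\le\eta^{-M}\rho_p(w_j)\to0,
\]
while $|\{p>M\}|\to0$ as $M\to\infty$, because $p$ is finite a.e.\ and $|\Omega|<\infty$. Together these give convergence in measure on $\Omega$, and a subsequence converging a.e.\ follows from the Riesz theorem.

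\textbf{Step 3 (small values).} On $\Omega\setminus A_{j_k}$ we have $|w_{j_k}|^{q(x)}\le1$. The functions $|w_{j_k}|^{q(x)}\mathbbm 1_{\{|w_{j_k}|<1\}}$ tend to $0$ a.e. Implicitly we need $q\ge1$, or at least $q>0$ a.e., so that $t^{q(x)}\to0$ as $t\to0$; I would state this assumption, which is consistent with the paper's standing convention that exponents lie in $[1,\infty)$. These functions are dominated by the constant $1$, which is integrable because $\Omega$ is bounded. Dominated convergence then gives $\int_{\Omega\setminus A_{j_k}}|w_{j_k}|^{q}dx\to0$.

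Adding Steps 1 and 3 yields $\rho_q(u_{j_k}-u)\to0$. The main delicate point is Step 2, namely securing convergence in measure when $p$ may be unbounded. Here finiteness of $p$ a.e.\ together with $|\Omega|<\infty$ is exactly what makes $|\{p>M\}|\to0$.
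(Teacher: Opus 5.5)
Your proof is correct and follows the same route as the paper. You split $\Omega$ at the level $|u_j-u|=1$, bound the large-value part by $\rho_p(u_j-u)$, and handle the small-value part by passing through convergence in measure to an a.e.\ convergent subsequence and then dominated convergence on the bounded set $\Omega$; the paper differs only in obtaining convergence in measure from the absolute continuity of Lebesgue measure with respect to $A\mapsto\int_A a^{p(x)}\,dx$ instead of your split into $\{p\le M\}$ and $\{p>M\}$, and in using $q\ge 1$ to bound $|u_{j_k}-u|^{q}\le|u_{j_k}-u|$ where you dominate by the constant $1$.
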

\begin{proof}
For any $a>0$, the Lebesgue measure $|\cdot|$ on the Borel subsets of $\Omega$ is absolutely continuous with respect to the measure defined by $A\rightarrow \nu_{a}(A)=\int\limits_{A}a^p\,dx$; accordingly, for arbitrary $\varepsilon>0$ there exists $\eta>0$ such that for every Borel set $A\subseteq \Omega$, it holds $\nu_{a}(A)<\eta \Rightarrow |A|<\epsilon$.  Now, for fixed $a$, due to the $\rho_p$-convergence of the sequence $(u_j)$ it is concluded that for any $\varepsilon>0$, there exists $I>0$ such that 
\begin{equation}
\nu_{a}(\{x:|(u-u_j)(x)|>a\})\leq  \rho_p(u-u_j)<\eta,
\end{equation}
whenever $j>I$.\\
Thus, $$\xi_j=|(u-u_j)(x)|\rightarrow 0$$ in measure and one can infer the existence of a subsequence of $(\xi_j)$, say $(\xi_{j_k})$ such that $$\xi_{j_k}(x)=|(u-u_{j_k})(x)|\rightarrow 0\,\,\,\text{a.e. in}\,\,\Omega.$$ Then
\begin{align}
\int\limits_{\Omega}|\xi_{j_k}|^qdx&=\left(\int\limits_{|\xi_{j_k}|\geq 1}+
\int\limits_{|\xi_{j_k}|<1}\right)|\xi_{j_k}|^qdx \\ \nonumber &\leq
\rho_p(\xi_{j_k})+ \int\limits_{|\xi_{j_k}|<1}\xi_{j_k}dx.
\end{align}
Letting $k\rightarrow \infty$, the first term tends to zero by assumption and the second one tends to zero by virtue of Lebesgue's dominated convergence theorem.
\end{proof}
\smallskip

We highlight the following lemma, which will be needed in the sequel:

\begin{lemma}\label{l1convergence}
If $L^{p(\cdot)}(\Omega) \supset (u_j)\overset{\rho_p}{\rightarrow}u$ then there is a subsequence of $(u_j)$, say $(u_{j_k})$ that converges to $u$ in $L^{1}(\Omega)$. 
\end{lemma}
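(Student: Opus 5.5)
The plan is to read the statement as the special case $q\equiv 1$ of Lemma~\ref{lqconvergence}, under that lemma's standing assumption that $\Omega$ is bounded. The boundedness is not optional. On $\Omega=\mathbb R$ with $p\equiv 2$, the functions $u_j=j^{-1}{\mathbbm 1}_{[0,j]}$ satisfy $\rho_p(u_j)=1/j\to0$, yet $\|u_j\|_{L^1}=1$ for every $j$, so no subsequence converges to $0$ in $L^1$. I would therefore state explicitly that $|\Omega|<\infty$ is assumed, as in Lemma~\ref{lqconvergence}.

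\textbf{Reduction.} Since $p(x)\ge1$ for every $x$, the constant exponent $q\equiv1$ satisfies $q\le p$ on $\Omega$. Lemma~\ref{lqconvergence} then gives a subsequence $(u_{j_k})$ with
\[
\int_\Omega |u_{j_k}-u|\,dx\longrightarrow0,
\]
which is exactly $L^1(\Omega)$ convergence. Note that $u_j-u$ is integrable for large $j$, because $\rho_p(u_j-u)<\infty$ and $\Omega$ is bounded.

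\textbf{Self-contained version.} Put $\xi_j=|u_j-u|$ and split $\Omega$ into $\{\xi_j\ge1\}$ and $\{\xi_j<1\}$.
\begin{itemize}
\item On $\{\xi_j\ge1\}$ we have $\xi_j\le\xi_j^{p(x)}$, so $\int_{\{\xi_j\ge1\}}\xi_j\,dx\le\rho_p(u_j-u)\to0$.
\item On $\{\xi_j<1\}$, fix $\eta\in(0,1)$. Then
\[
\int_{\{\xi_j<1\}}\xi_j\,dx\le \eta|\Omega|+\bigl|\{\xi_j\ge\eta\}\bigr|,
\]
and the last term tends to $0$ because $\xi_j\to0$ in measure.
\end{itemize}
Since $\eta$ is arbitrary, this shows $\xi_j\to0$ in $L^1$ along the full sequence, not just a subsequence. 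If one prefers the subsequence route, one instead extracts an a.e.\ convergent subsequence and applies dominated convergence with the bound ${\mathbbm 1}_\Omega$.

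\textbf{Main step.} The only nontrivial ingredient is convergence in measure, and I would take it from the first part of the proof of Lemma~\ref{lqconvergence}. Since $p\ge1$ and $\eta<1$,
\[
\int_{\{\xi_j\ge\eta\}}\eta^{p(x)}\,dx\le\rho_p(u_j-u)\to0.
\]
The finite measure $A\mapsto\int_A\eta^{p(x)}\,dx$ is mutually absolutely continuous with Lebesgue measure on $\Omega$, because $\eta^{p(x)}>0$ everywhere. Hence $|\{\xi_j\ge\eta\}|\to0$, which is the claimed convergence in measure.
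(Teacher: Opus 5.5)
Your proposal is correct and follows the paper's intended route. The paper gives no separate proof; it treats the lemma as the case $q\equiv1$ of Lemma~\ref{lqconvergence}, and your self-contained version reproduces that argument: convergence in measure via the weighted measure, then splitting at $\{\xi_j\ge1\}$. Your refinements are also right: $|\Omega|<\infty$ is genuinely needed, as your example $u_j=j^{-1}{\mathbbm 1}_{[0,j]}$ on $\mathbb R$ shows, and on a finite-measure set convergence in measure already gives $L^1$ convergence of the whole sequence, so no subsequence is needed.
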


The preceding lemma allows us to construct a $\rho$-open neighborhood of  $u_0\in L^p(\Omega)$. Specifically, we have:  
\begin{lemma}\label{open1}
For $u\in L^{p(\cdot)}(\Omega)$, the set 
\begin{equation}
U_{u,\epsilon}=\left\{v\in L^{p(\cdot)}(\Omega):\int\limits_{\Omega}|v-u|dx<\epsilon\right\}
\end{equation}
is $\tau_{\rho_p}$-open.
\begin{proof}
Take $L^{p(\cdot)}(\Omega) \setminus U_{u,\epsilon}\supset (u_j)\overset{\rho}\rightarrow v$. On account of Lemma \ref{l1convergence} there exists a subsequence $(u_{j_k})$ that converges to $v$ in $L^1$. Then
\begin{equation}
\int\limits_{\Omega}|v-u|dx\geq \int\limits_{\Omega}|u_{j_k}-u|dx-\int\limits_{\Omega}|u_{j_k}-v|dx,
\end{equation}
from which it is obvious that $v\in L^{p(\cdot)}(\Omega) \setminus U_{u,\epsilon}$.
\end{proof}
\end{lemma}
\begin{corollary}\label{lpqmodularcontinuity}
If $p(\cdot)$ and $q(\cdot)$ are measurable variable exponents on $\Omega$ with $1\leq q(x)\leq p(x)<\infty$, then if $\Omega\subset {\mathbb R}^n$ is a bounded domain, the inclusion
\begin{equation}
i_{p,q}:L^{p(\cdot)}(\Omega)\hookrightarrow L^{q(\cdot)}(\Omega)
\end{equation}
is modularly continuous.
\end{corollary}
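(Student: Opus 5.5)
By Theorem~\ref{continuitysequential}, a map between modular spaces is continuous for the modular topologies as soon as it is sequentially continuous. Since $i_{p,q}$ is linear, the plan is therefore to show one thing: if $u_j\xrightarrow{\rho_p}u$ in $L^{p(\cdot)}(\Omega)$, then $u_j\xrightarrow{\rho_q}u$ in $L^{q(\cdot)}(\Omega)$. The inclusion is well defined on the spaces themselves. Indeed, $|\lambda f|^{q(x)}\le 1+|\lambda f|^{p(x)}$ pointwise because $q\le p$, and $\Omega$ is bounded, so $\rho_p(\lambda f)<\infty$ implies $\rho_q(\lambda f)<\infty$.

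The convergence step reuses the argument of Lemma~\ref{lqconvergence}, with the constant exponent there replaced by the variable exponent $q(\cdot)$. Put $\xi_j=|u_j-u|$. On the set $\{\xi_j\ge1\}$ we have $\xi_j^{q(x)}\le\xi_j^{p(x)}$, and on $\{\xi_j<1\}$ we have $\xi_j^{q(x)}\le\xi_j$ since $q\ge1$. Hence
\[
\rho_q(u_j-u)\le\rho_p(u_j-u)+\int_\Omega \min\{\xi_j,1\}\,dx .
\]
The first term tends to zero by hypothesis. For the second, the proof of Lemma~\ref{lqconvergence} shows that $\xi_j\to0$ in measure on the bounded set $\Omega$. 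Because $\min\{\xi_j,1\}\le 1$ and $|\Omega|<\infty$, convergence in measure already gives $\int_\Omega\min\{\xi_j,1\}\,dx\to0$ for the whole sequence, with no need to pass to a subsequence.

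The main obstacle is that Lemma~\ref{lqconvergence}, as stated, only produces a subsequence. If one wants to rely on that lemma verbatim rather than on the convergence-in-measure remark above, the fallback is a subsequence-of-subsequence argument. Suppose $\rho_q(u_j-u)\not\to0$. By Lemma~\ref{lconvergence}(iv) there is a subsequence no further subsequence of which $\rho_q$-converges to $u$. That subsequence still $\rho_p$-converges to $u$, so the variable-exponent version of Lemma~\ref{lqconvergence} gives a further subsequence that $\rho_q$-converges to $u$, which is a contradiction. Either route yields $\rho_q(u_j-u)\to0$. By Theorem~\ref{convergencecharacterization} this is convergence in $\tau_{\rho_q}$, and Theorem~\ref{continuitysequential} then gives the modular continuity of $i_{p,q}$.
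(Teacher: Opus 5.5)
Your proof is correct and rests on the same splitting estimate as the paper's (Lemma~\ref{lqconvergence}). The paper checks directly that $i_{p,q}^{-1}(M)$ is $\rho_p$-closed for every $\rho_q$-closed $M$; there a $\rho_q$-convergent subsequence lying in $M$ already forces the limit into $M$, so the full-sequence issue you flag as the main obstacle never arises. Your route through Theorem~\ref{continuitysequential} costs the extra convergence-in-measure (or subsequence-of-subsequence) step, but in exchange it yields the slightly stronger fact that the whole sequence $(u_j)$ $\rho_q$-converges to $u$.
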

\begin{proof}
Let $M\subseteq L^{q(\cdot)}(\Omega)$ be $\rho_q$-closed and consider a sequence $(v_j)\subset  i_{p,q}^{-1}(M)$ such that
\begin{equation}
(v_j)\overset{\rho_p}{\rightarrow}v.
\end{equation}
On account of Lemma \ref{lqconvergence} there exists a subsequence $(v_{j_k})$ of $(v_j)$ such that $v_{j_k}=i_{p,q}(v_{j_k})\overset{\rho_q}{\rightarrow v}=i_{p,q}v$.
 Since $M$ is $\rho_q$-closed, $v\in M$. Hence
$i_{p,q}^{-1}(M)$ is $\rho_p$-closed.
\end{proof}
\begin{proposition}\label{Lpboundedhausdorff}
If $\Omega\subset{\mathbb R}^n$ is a bounded domain and $1<p(x)<\infty$ then $\left(L^{p(\cdot)}(\Omega),\tau_{\rho_{p}}\right)$ is Hausdorff.
\end{proposition}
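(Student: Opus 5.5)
The plan mirrors the proof of Theorem~\ref{lphausdorff}, with the coordinate functionals replaced by the $L^1$ distance. Lemma~\ref{open1} supplies the required open sets. For $u\in L^{p(\cdot)}(\Omega)$ and $\epsilon>0$, the set
\[
U_{u,\epsilon}=\Big\{v\in L^{p(\cdot)}(\Omega):\int_\Omega|v-u|\,dx<\epsilon\Big\}
\]
is $\tau_{\rho_p}$-open. For these sets to be meaningful we need $L^{p(\cdot)}(\Omega)\subset L^1(\Omega)$. This holds because $\Omega$ is bounded and $p\ge1$: if $\rho_p(\lambda u)<\infty$, then $|\lambda u|\le 1+|\lambda u|^{p(x)}$, so $\int_\Omega|u|\,dx\le \lambda^{-1}(|\Omega|+\rho_p(\lambda u))<\infty$.

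Before relying on Lemma~\ref{open1}, I would check that Lemma~\ref{l1convergence}, on which it rests, is available here. It is Lemma~\ref{lqconvergence} applied with $q\equiv1\le p$ on the bounded set $\Omega$. That argument uses convergence in measure, obtained from the absolute continuity of Lebesgue measure with respect to $\nu_a$; this is legitimate since $a^{p(x)}>0$ everywhere because $p<\infty$. It then uses a.e. convergence of a subsequence, the splitting $\{|\xi|\ge1\}\cup\{|\xi|<1\}$, and dominated convergence on the bounded domain. Consequently the complement of $U_{u,\epsilon}$ is sequentially closed: if $u_j\notin U_{u,\epsilon}$ and $u_j\xrightarrow{\rho_p}v$, pass to an $L^1$-convergent subsequence and apply the triangle inequality to get $\int|v-u|\ge\epsilon$. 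Proposition~\ref{rho-topology} then shows the complement is closed.

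With these sets in hand, separation is straightforward. Take $u\neq w$ in $L^{p(\cdot)}(\Omega)$. They differ on a set of positive measure, so $d:=\int_\Omega|u-w|\,dx>0$, and this quantity is finite by the inclusion above. Put $\epsilon=d/2$. Then $U_{u,\epsilon}$ and $U_{w,\epsilon}$ are $\tau_{\rho_p}$-open neighborhoods of $u$ and $w$. They are disjoint, because any $v$ in both would give $d\le\int|v-u|+\int|v-w|<d$ by the triangle inequality in $L^1$.

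The only real obstacle is the justification of Lemma~\ref{l1convergence}, in particular the measure-convergence step when $p$ is unbounded. I would handle it as in Lemma~\ref{closedballsclosed}. Split $\Omega$ into $\{p\le M\}$ and $\{p>M\}$. On the first part, apply Chebyshev's inequality to $|u_j-u|^{p(x)}\ge a^{M}$ on $\{|u_j-u|>a\}$, for $a\le1$. The second part has measure tending to $0$ as $M\to\infty$, since $p<\infty$ a.e. and $|\Omega|<\infty$. Once convergence in measure is established, the rest of the argument is routine.
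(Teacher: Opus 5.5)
Your proposal is correct and is essentially the paper's proof: the paper separates distinct $u_1,u_2$ by the sets $U_{u_i,\|u_1-u_2\|_1/2}$ from Lemma~\ref{open1}, exactly as you do with $\epsilon=d/2$. Your additional checks are details the paper leaves implicit, and they are sound: the inclusion $L^{p(\cdot)}(\Omega)\subset L^1(\Omega)$ on bounded $\Omega$, which makes $d$ finite, and the convergence-in-measure step behind Lemma~\ref{l1convergence}.
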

\begin{proof}
For two distinct elements $u_1$ and $u_2$ in $L^p(\Omega)$ it is clear that $U_{u_1,\frac{\|u_1-u_2\|_1}{2}}\cap U_{u_2,\frac{\|u_1-u_2\|_1}{2}}$ are disjoint open neighborhoods of $u_1$ and $u_2$ respectively.
\end{proof}
\begin{theorem}\label{Lphausdorff}
If $\Omega \subset {\mathbb R}^n$ is an arbitrary domain, then $\left(L^{p(\cdot)}(\Omega),\tau_{\rho_{p}}\right)$ is Hausdorff if $1<p(x)<\infty$. This includes the case $p^+=\infty$.
\end{theorem}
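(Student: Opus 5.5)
Since Proposition~\ref{Lpboundedhausdorff} already covers bounded $\Omega$, I will replace the global $L^1$-neighborhoods $U_{u,\epsilon}$ of Lemma~\ref{open1} by localized versions, which work on an arbitrary domain. For a bounded open set (or ball) $K$ with $\overline K\subset\Omega$, $u\in L^{p(\cdot)}(\Omega)$ and $\epsilon>0$, put
\[
U^K_{u,\epsilon}:=\Bigl\{v\in L^{p(\cdot)}(\Omega):\int_K|v-u|\,dx<\epsilon\Bigr\}.
\]
I will show that each $U^K_{u,\epsilon}$ is $\tau_{\rho_p}$-open and then separate distinct points with such sets.

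\emph{Openness.} By Proposition~\ref{rho-topology} it suffices to show that the complement is sequentially closed. Take $(u_j)$ in the complement with $u_j\xrightarrow{\rho_p}v$ in $L^{p(\cdot)}(\Omega)$. The restriction map $w\mapsto w|_K$ satisfies $\rho_{p|_K}(w|_K)\le\rho_p(w)$, so $u_j|_K\xrightarrow{\rho_{p|_K}}v|_K$ in $L^{p(\cdot)}(K)$. Since $K$ is bounded, Lemma~\ref{lqconvergence} applied with $q\equiv1\le p$ (this is Lemma~\ref{l1convergence} on $K$) gives a subsequence with $\int_K|u_{j_k}-v|\,dx\to0$. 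The reverse triangle inequality then yields
\[
\int_K|v-u|\,dx\ \ge\ \limsup_k\Bigl(\int_K|u_{j_k}-u|\,dx-\int_K|u_{j_k}-v|\,dx\Bigr)\ \ge\ \epsilon,
\]
exactly as in the proof of Lemma~\ref{open1}. Hence the complement is closed. Finiteness of $\int_K|v-u|$ is not needed, since the inequality holds in $[0,\infty]$.

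\emph{Separation.} Let $u_1\neq u_2$ in $L^{p(\cdot)}(\Omega)$. Then $u_1-u_2\neq0$ on a set of positive measure. Exhaust $\Omega$ by countably many balls $K_m$ with $\overline{K_m}\subset\Omega$; some $K=K_m$ satisfies $|\{u_1\neq u_2\}\cap K|>0$. Functions in $L^{p(\cdot)}$ are locally integrable on bounded sets, because $|\lambda w|\le1+|\lambda w|^{p(x)}$ when $p\ge1$. Therefore $d:=\int_K|u_1-u_2|\,dx\in(0,\infty)$, and by the triangle inequality $U^K_{u_1,d/2}$ and $U^K_{u_2,d/2}$ are disjoint open neighborhoods of $u_1$ and $u_2$.

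\emph{Main obstacle.} The delicate step is the validity of Lemma~\ref{lqconvergence} on $K$ when $p$ is unbounded on $K$, since the case $p^+=\infty$ is explicitly included. The argument there uses only $|\{|\xi_j|>1\}|\le\rho_p(\xi_j)$ and convergence in measure on a finite-measure set, both of which hold without a bound on $p$. So I expect it to go through. If needed, I would redo it directly: split $K$ into $\{|\xi_j|\ge1\}$, where $|\xi_j|\le|\xi_j|^{p}$, and $\{|\xi_j|<1\}$, where dominated convergence applies along an a.e.-convergent subsequence obtained from convergence in measure.
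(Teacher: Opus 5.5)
Your proof is correct and is essentially the paper's argument. The paper restricts to the bounded set $E=\Omega\cap B_R(0)$, notes that the restriction map is modularly continuous because $\rho_{p,E}(f|_E)\le\rho_{p,\Omega}(f)$, and pulls back the disjoint $L^1(E)$-neighborhoods of Proposition~\ref{Lpboundedhausdorff}; these preimages are exactly your sets $U^K_{u,\epsilon}$. The only differences are that you verify the openness of these preimages directly via Lemma~\ref{lqconvergence}, and that your choice of a ball $K$ with $\overline K\subset\Omega$ makes $K$ a genuine bounded domain, as those results require, whereas $\Omega\cap B_R(0)$ need not be connected.
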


\begin{proof}
By translation it suffices to separate $0$ from a nonzero
$u\in L^{p(\cdot)}(\Omega)$. Choose $R>0$ so that the bounded open set
$E=\Omega\cap B_R(0)$ satisfies $T_Eu=u|_E\ne0$. The restriction map
$T_E:L^{p(\cdot)}(\Omega)\to L^{p(\cdot)}(E)$ is modularly continuous,
because $\rho_{p,E}(T_Ef)\le\rho_{p,\Omega}(f)$. Proposition
\ref{Lpboundedhausdorff} gives disjoint open neighborhoods $O_0,O_u$ of
$0$ and $T_Eu$. Their inverse images are the required disjoint modularly
open neighborhoods of $0$ and $u$.
\end{proof}
The only noteworthy cases of Theorems \ref{lphausdorff} and \ref{Lphausdorff} arise when the sequence \((p_n)\) is unbounded and the function $p(\cdot)$ is unbounded, respectively. For a bounded exponent sequence \((p_n)\), (for a bounded function $p(\cdot)$ on $\Omega$) it has been proved in Theorems \ref{r-continuous-p^+} and \ref{r-continuous-p^+L} that the modular \(\rho_{(p_n)}\) $(\rho_{p}$) satisfies the \(\Delta_2\)-condition, and according to Theorem \ref{Mainequivalence}, \(\tau_{\rho}\) corresponds to the topology induced by the Luxemburg norm, which is \(T_i\) for \(i = 0, 1, 2, 3, 4\).

Recall that if \((X,\tau)\) is a first-countable topological space and \(A \subset X\), then for any \(x\) in the \(\tau\)-closure of \(A\), there exists a sequence \((a_j) \subset A\) that \(\tau\)-converges to \(x\).

\begin{theorem}\label{non-first-countable}
Let $(p_n)$ be an unbounded sequence. Then the modular topology on
$\ell^{(p_n)}$ is not first-countable.  Moreover, let $\Omega\subset\mathbb{R}^n$ be a domain and let
$p\in C(\Omega)$ satisfy
\[
1\leq p(x)<\infty
\qquad\text{for every }x\in\Omega,
\]
with $p^+=\infty$. Then the modular topology on
$L^{p(\cdot)}(\Omega)$ is not first-countable.
\end{theorem}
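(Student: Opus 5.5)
Suppose the modular topology on $\ell^{(p_n)}$ were first-countable; we aim for a contradiction. The recalled fact just before the statement says that in a first-countable space every point of the closure of a set $A$ is the limit of a sequence from $A$. By Theorem~\ref{convergencecharacterization}, $\tau_{\rho}$-convergence of sequences is exactly $\rho$-convergence. So first-countability would yield: every $x$ in $\overline{A}^{\rho}$ is a $\rho$-limit of a sequence in $A$. In other words, the modular closure would be obtained in one sequential step.

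We will contradict this with $A=\mathbf c_{00}$. By Theorem~\ref{separability}, $\overline{\mathbf c_{00}}^{\rho_{\mathbf p}}=\ell^{(p_n)}$, so in particular the sequence $\mathbf s={\mathbbm 1}_{\{n_1,n_2,\ldots\}}$ of Theorem~\ref{modularballsnotopen} lies in the closure. However, every $\mathbf a\in\mathbf c_{00}$ satisfies $\rho_{\mathbf p}(\mathbf s-\mathbf a)=\infty$, because $\mathbf s-\mathbf a$ has infinitely many coordinates equal to $1$ in absolute value. Hence no sequence in $\mathbf c_{00}$ can $\rho_{\mathbf p}$-converge to $\mathbf s$, which is the desired contradiction. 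The key point is that density of $\mathbf c_{00}$ was obtained through the vector-space closure property (Proposition~\ref{subspace}) applied to $\lambda\mathbf b$, not by direct modular approximation. For elements like $\mathbf s$ with $\rho(\mathbf s)<\infty$ that are still not approximable, the closure genuinely needs more than one sequential step. As a check, $\rho_{\mathbf p}(\mathbf s)=\sum_k 1=\infty$, so indeed $\mathbf s$ lies outside the set of directly approximable elements.

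For $L^{p(\cdot)}(\Omega)$ we argue in the same way, taking $A=C_0^\infty(\Omega)$, which is $\tau_{\rho_p}$-dense by Theorem~\ref{separability}, and the function $v$ of Theorem~\ref{general-unbounded-example}. For each $h\in C_0^\infty(\Omega)$, continuity of $p$ bounds $p$ on $\operatorname{supp}h$. Hence all but finitely many of the sets $E_j$ are disjoint from $\operatorname{supp}h$, and as computed in Corollary~\ref{emptyinterior},
\[
\rho_p(v-h)\ge\sum_{j\ge j_0}k_j=\infty .
\]
So no sequence in $C_0^\infty(\Omega)$ $\rho_p$-converges to $v$, although $v$ lies in the modular closure. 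This again contradicts first-countability.

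The step requiring care is the one that places the sets $E_j$ in Theorem~\ref{general-unbounded-example} at exponent levels $k_j\to\infty$. Because the $E_j$ were chosen inside $F_j=\{k_j\le p<k_j+1\}$, any set on which $p$ is bounded meets only finitely many $E_j$. This holds for compact supports under continuity of $p$, with no boundedness of $\Omega$ needed. Everything else is a direct combination of the density theorem and the sequential characterization of $\tau_\rho$.
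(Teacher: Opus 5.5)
Your proof is correct and is essentially the paper's: you exhibit a point of the modular closure of $\mathbf c_{00}$ (resp.\ $C_0^\infty(\Omega)$) at infinite modular distance from every element of that set, which first-countability together with Theorem~\ref{convergencecharacterization} rules out; the only difference is that you cite the density in Theorem~\ref{separability}, whereas the paper derives $\mathbf s\in\overline{\mathbf c_{00}}^{\rho_{\mathbf p}}$ directly from truncations of $\tfrac12\mathbf s$ and Proposition~\ref{subspace}, which is exactly how that density is proved. One slip to fix: the phrase ``elements like $\mathbf s$ with $\rho(\mathbf s)<\infty$'' should read $\rho_{\mathbf p}(\mathbf s)=\infty$ (while $\rho_{\mathbf p}(\mathbf s/2)<\infty$), as your own check in the next sentence shows; this does not affect the argument.
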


\begin{proof}
We begin with the sequence space. Since $(p_n)$ is unbounded, there exists
a strictly increasing sequence $(n_k)$ such that
\[
p_{n_k}\geq k
\qquad\text{for every }k\geq1.
\]
Let $A\subset\ell^{(p_n)}$ be the subspace consisting of sequences with
only finitely many nonzero terms. By Proposition~\ref{subspace},
$\overline{A}^{\rho_{\mathbf p}}$ is a vector subspace.\\
Set
\[
\mathbf{x}
=
\frac12\,{\mathbbm 1}_{\{n_1,n_2,\ldots\}} .
\]
If
\[
\mathbf{x}^{(N)}
=
\frac12\,{\mathbbm 1}_{\{n_1,\ldots,n_N\}},
\]
then $\mathbf{x}^{(N)}\in A$ and
\[
\rho_{\mathbf p}\bigl(\mathbf{x}^{(N)}-\mathbf{x}\bigr)
=
\sum_{k=N+1}^{\infty}2^{-p_{n_k}}
\leq
\sum_{k=N+1}^{\infty}2^{-k}
\longrightarrow0.
\]
Hence  $\mathbf{x}\in\overline{A}^{\rho_{\mathbf p}}$.  Since $\overline{A}^{\rho_{\mathbf p}}$ is a vector subspace,
\[
\mathbf{s}:=2\mathbf{x}
=
{\mathbbm 1}_{\{n_1,n_2,\ldots\}}
\in\overline{A}^{\rho_{\mathbf p}}.
\]
However, $\mathbf{s}$ cannot be the modular limit of any sequence in $A$.
Indeed, for every $\mathbf{a}\in A$, there are infinitely many indices
$n_k$ outside the support of $\mathbf{a}$, and therefore
\[
\rho_{\mathbf p}(\mathbf{s}-\mathbf{a})
\geq
\sum_{n_k\notin\operatorname{supp}\mathbf{a}}1
=
\infty.
\]
Thus $\mathbf{s}$ belongs to the closure of $A$ but is not the limit of
any sequence in $A$.\\
If $\tau_{\rho_{\mathbf p}}$ were first-countable, every point in the
closure of a set would be the limit of a sequence from that set. This
contradiction proves that the modular topology on $\ell^{(p_n)}$ is not
first-countable.\\
We turn to $L^{p(\cdot)}(\Omega)$. Let $v$ be the function constructed
in Proposition~6.1. By Theorem~6.4,
$C_0^\infty(\Omega)$ is dense in the modular topology, so
\[
v\in\overline{C_0^\infty(\Omega)}^{\rho_p}.
\]
We claim that $v$ is not the modular limit of any sequence in
$C_0^\infty(\Omega)$.  Indeed, if $\phi\in C_0^\infty(\Omega)$, then
$\operatorname{supp}\phi$ is compact. Since $p\in C(\Omega)$,
$p$ is bounded on $\operatorname{supp}\phi$. In the construction of
$v$, the sets
\[
E_j=\{x\in\Omega:k_j\leq p(x)<k_j+1\}
\]
satisfy $k_j\to\infty$. Hence, for all sufficiently large $j$,  $E_j\cap\operatorname{supp}\phi=\varnothing$.  Therefore,
\[
\begin{aligned}
\rho_p(v-\phi)
&\geq
\sum_{j\geq j_0}
\frac{1}{|E_j|}
\int_{E_j}k_j^{p(x)/k_j}\,dx\\
&\geq
\sum_{j\geq j_0}k_j
=
\infty.
\end{aligned}
\]
Thus no sequence in $C_0^\infty(\Omega)$ can $\rho_p$-converge to $v$.  Hence $v$ belongs to the modular closure of $C_0^\infty(\Omega)$ but is
not the modular limit of any sequence from that set. First countability
would again force these two notions of closure to agree. Consequently,
the modular topology on $L^{p(\cdot)}(\Omega)$ is not first-countable.
\end{proof}

Since second countability implies first countability, it is immediate that:

\begin{corollary}  Under the assumptions of Theorem \ref{non-first-countable}, 
neither $(\ell^{(p_n)},\tau_{\rho_{\mathbf{p}}})$ nor $(L^{p(\cdot)}(\Omega),\tau_{\rho_{{p}}})$ are second countable.
\end{corollary}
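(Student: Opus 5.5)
This follows directly from Theorem~\ref{non-first-countable}, using the standard fact that every second-countable space is first-countable. Suppose, say, that $(\ell^{(p_n)},\tau_{\rho_{\mathbf p}})$ had a countable base $\mathcal B$. For each $x\in\ell^{(p_n)}$, the family $\{U\in\mathcal B: x\in U\}$ is countable. It is also a neighborhood base at $x$: any open $\mathcal O\ni x$ is a union of members of $\mathcal B$, and one of those members contains $x$. Hence the space would be first-countable, which contradicts the first part of Theorem~\ref{non-first-countable}.

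The function-space case is handled in exactly the same way. Under the hypotheses $p\in C(\Omega)$, $1\le p(x)<\infty$ and $p^+=\infty$, the second part of Theorem~\ref{non-first-countable} shows that $(L^{p(\cdot)}(\Omega),\tau_{\rho_p})$ is not first-countable. So it cannot be second-countable either.

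No step presents a real obstacle. The only point to check is that the corollary's hypotheses are literally those of Theorem~\ref{non-first-countable}, so that theorem applies as stated. This argument deliberately avoids separability, which holds here by Theorem~\ref{separability}. For non-metrizable spaces, separability does not imply second countability, so it gives no route to a contradiction. This is consistent with the modular topology being separable yet not second countable.
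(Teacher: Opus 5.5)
Your argument is correct and matches the paper's own one-line deduction: second countability implies first countability, so Theorem~\ref{non-first-countable} rules it out in both the sequence and function-space settings. Your explicit check that $\{U\in\mathcal B: x\in U\}$ is a countable local base at each $x$ only fills in that standard step, and the hypotheses you invoke are exactly those of the theorem.
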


\medskip
The preceding results show that, when $p^+=\infty$, the modular topology
is not merely a convenient language for modular convergence. It exhibits
a distinctive functional-analytic structure of its own. Although it is
no longer a topological vector space, it retains a fundamental
compatibility with the underlying linear structure: the modular closure
of every linear subspace is again a linear subspace. At the same time,
the topology is Hausdorff and separable while failing to be
first-countable; its modular balls have empty interior, and its nonempty
open sets have infinite modular diameter. Thus, the failure of the
$\Delta_2$-condition does not simply produce a weaker substitute for norm
convergence, but rather a genuinely different topology that preserves
essential linear features while exhibiting phenomena impossible in the
classical norm-topological setting. The variational applications considered in the next section should be viewed in this context: they illustrate how the modular topology remains analytically useful precisely in a regime where it differs from the classical norm topology.

\section{Applications }\label{applications}

The results established in the previous sections show that, in the absence of the $\Delta_2$-condition, the modular topology possesses structural properties that are fundamentally different from those of norm topologies. The natural question is whether these topological phenomena have genuine analytical consequences or are merely of independent topological interest.

The purpose of this section is to show that the modular topology arises
naturally in variational problems involving non-standard growth, in
particular those associated with the $p(\cdot)$-Laplacian. When the
exponent $p(\cdot)$ is unbounded, the modular topology differs from the
classical norm topology and more directly reflects the structure of the
underlying energy functional. In this setting, modular convergence
provides a natural framework for studying minimization problems and the
corresponding boundary value problems for the $p(\cdot)$-Laplacian.

The existence results obtained below therefore illustrate that the
modular topology is not simply an abstract topological construction.
Rather, it provides a natural analytical setting for variational
problems with non-standard growth, where the classical norm topology
may fail to capture the relevant variational structure.\\

\begin{definition}\label{sobolev}
Let $\Omega\subset {\mathbb R}^n$ be a bounded, $C^1$ domain and $p:\Omega\rightarrow (1,\infty)$. Assume $p^->1.$ Write
\begin{equation}
W^{1,p^-,p(\cdot)}(\Omega):=\left\{u\in L^{p^-}(\Omega):\nabla u \in (L^{p(\cdot)}(\Omega))^{n}\right\}
\end{equation}
and
\begin{align}
\rho_{p^-,p}&:W^{1,p^-,p(\cdot)}(\Omega)\rightarrow [0,\infty]
\\ \nonumber &\rho_{p^-,p}(u):=\int\limits_{\Omega}|u|^{p^-}dx+\int\limits_{\Omega}\frac{|\nabla u|^p}{p}dx,
\end{align}
where $|\nabla u|$ denotes the Euclidean norm of the gradient.\\
Then, $\rho_{p^-,p}$ is a convex, left-continuous modular on the vector space $W^{1,p^-,p(\cdot)}(\Omega)$.
\end{definition}
For vector fields $z$ put
\[
R_p(z):=\int_\Omega\frac{|z|^{p}}{p}\,dx.
\]
The following uniform-convexity property is the form needed below.
\begin{theorem}\label{ucmodular}
For every $\varepsilon>0$ there is $\delta(\varepsilon)>0$ such that
for all $z_1,z_2\in(L^{p(\cdot)}(\Omega))^n$, the inequality
\[
R_p\left(\frac{z_1-z_2}{2}\right)
\ge\varepsilon\frac{R_p(z_1)+R_p(z_2)}2
\]
implies
\[
R_p\left(\frac{z_1+z_2}{2}\right)
\le(1-\delta(\varepsilon))
\frac{R_p(z_1)+R_p(z_2)}2.
\]

\end{theorem}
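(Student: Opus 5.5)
The plan is to reduce the statement to a pointwise inequality for the integrand $\phi_{p}(a)=|a|^{p}/p$, $a\in\mathbb R^n$, and then integrate. The pointwise constants must depend only on $p^-$ and not on $p^+$, since $p^+=\infty$ is allowed. The exponent $p(x)$ is the same in every term at a fixed $x$, so the weight $1/p(x)$ factors out and we may work with $|a|^{p}$.

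Fix $z_1,z_2$ and write $a=z_1(x)$, $b=z_2(x)$. Put
\[
M=\tfrac12\bigl(R_p(z_1)+R_p(z_2)\bigr),\qquad D(x)=\frac{1}{p(x)}\Bigl(\tfrac{|a|^{p}+|b|^{p}}{2}-\bigl|\tfrac{a+b}{2}\bigr|^{p}\Bigr)\ \ (\ge0 \text{ by convexity}).
\]
The goal is $\int_\Omega D\,dx\ge\delta M$. The cases $M=0$ and $M=\infty$ are trivial; in the latter the conclusion is vacuous or read as $\infty\le\infty$. So assume $0<M<\infty$, which makes all integrals finite. Split $\Omega=\Omega_1\cup\Omega_2$ with $\Omega_1=\{p\ge2\}$ and $\Omega_2=\{p^-\le p<2\}$.

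On $\Omega_1$ I use Clarkson's pointwise inequality $\bigl|\frac{a+b}2\bigr|^p+\bigl|\frac{a-b}2\bigr|^p\le\frac{|a|^p+|b|^p}2$ for $p\ge2$. It gives $p^{-1}\bigl|\frac{a-b}{2}\bigr|^{p}\le D$ with constant $1$, uniformly in $p$.

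On $\Omega_2$ I need a uniform pointwise lemma. For every $\eta\in(0,1)$ there should be $\delta_0=\delta_0(\eta,p^-)>0$ such that, for all $p\in[p^-,2]$,
\[
\bigl|\tfrac{a-b}{2}\bigr|\ge\eta\max(|a|,|b|)\quad\Longrightarrow\quad \bigl|\tfrac{a+b}{2}\bigr|^{p}\le(1-\delta_0)\,\tfrac{|a|^p+|b|^p}{2}.
\]
I would prove this by homogeneity: normalise $\max(|a|,|b|)=1$. The set of admissible $(a,b,p)$ is then compact, and the ratio of the two sides is continuous there and strictly less than $1$. Strictness holds because strict convexity of $|\cdot|^p$, $p>1$, forces equality only when $a=b$, and $a=b$ is excluded by the hypothesis. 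This compactness step is where I expect the main care to be needed. The point is that it is why $p^->1$ is essential, and why $\Omega_1$ has to be handled separately by Clarkson rather than by compactness.

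Let $G=\{x\in\Omega_2:\ |a-b|/2\ge\eta\max(|a|,|b|)\}$. Then on $G$ the lemma gives $D\ge\delta_0\,\frac{|a|^p+|b|^p}{2p}\ge\delta_0\,p^{-1}\bigl|\frac{a-b}2\bigr|^p$, where the last step is convexity of $|\cdot|^p$. On $\Omega_2\setminus G$ we have
\[
p^{-1}\bigl|\tfrac{a-b}{2}\bigr|^{p}\le\eta^{p}\,p^{-1}\max(|a|,|b|)^p\le 2\eta^{p^-}\,p^{-1}\,\tfrac{|a|^p+|b|^p}{2}.
\]
Integrating all three pieces and using the hypothesis gives
\[
\varepsilon M\le R_p\bigl(\tfrac{z_1-z_2}{2}\bigr)\le\Bigl(1+\tfrac1{\delta_0}\Bigr)\int_\Omega D\,dx+2\eta^{p^-}M.
\]
Now choose $\eta$ so that $2\eta^{p^-}\le\varepsilon/2$; this fixes $\delta_0(\eta,p^-)$. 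The result is $\int_\Omega D\ge \frac{\varepsilon/2}{1+1/\delta_0}\,M=:\delta(\varepsilon)M$, which is exactly $R_p\bigl(\frac{z_1+z_2}{2}\bigr)\le(1-\delta(\varepsilon))M$. Note that $\delta(\varepsilon)$ depends only on $\varepsilon$ and $p^-$, and not on $p^+$.
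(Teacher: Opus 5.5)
Your proof is correct. It differs from the paper's only in that the paper gives no argument of its own: it invokes \cite[Theorem 5.4]{AOJA}.

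You reduce everything to pointwise inequalities for $|a|^{p}$ on $\mathbb R^n$, with the weight $1/p(x)$ factoring out. On $\{p\ge2\}$ you use the vector Clarkson inequality (parallelogram law, then $x^{q}+y^{q}\le(x+y)^{q}$ and Jensen with $q=p/2\ge1$), which holds with constant $1$ uniformly in $p$. On $\{p<2\}$ you use a compactness gap on $\{\max(|a|,|b|)=1,\ |a-b|\ge2\eta,\ p\in[p^-,2]\}$. The set where $z_1,z_2$ are pointwise close is absorbed by choosing $2\eta^{p^-}\le\varepsilon/2$.

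This buys a self-contained proof in which $\delta$ visibly depends only on $\varepsilon$ and $p^-$, never on $p^+$. That is exactly what Theorem~\ref{existenceofminimizer} needs when $p^+=\infty$. It also makes explicit the role of the standing hypothesis $p^->1$ from Definition~\ref{sobolev}, which the statement does not repeat. That hypothesis is what makes your parameter set compact, and without it the statement fails. For example, take $z_1=\mathbbm 1_E$ and $z_2=0$ with $E\subset\{p<1+\theta\}$ of positive finite measure; then $R_p(\tfrac{z_1\pm z_2}{2})>2^{-\theta}\,\tfrac{R_p(z_1)+R_p(z_2)}{2}$, so no uniform $\delta$ exists.

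The citation spares the paper this work. The one non-explicit ingredient on your side is $\delta_0(\eta,p^-)$, which comes from compactness rather than a formula.
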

\begin{proof}
This is the uniform convexity of the variable-exponent integral modular; see \cite[Theorem 5.4]{AOJA}.
\end{proof}
\begin{lemma}\label{weighted-modular-completeness}

Let $\Omega$ be bounded, let $p(x)<\infty$ almost everywhere, and let
$p_->1$. Then:
\begin{enumerate}
\item[(i)] $(L^{p(\cdot)}(\Omega))^n$ is complete for $R_p$-convergence;
\item[(ii)] $R_p(z_j)\to0$ implies
$\|z_j\|_{L^{p^-}(\Omega)}\to0$;
\item[(iii)] $\rho_{p^-,p}(u_j-u)\to0$ implies
$u_j\to u$ in $W^{1,p^-}(\Omega)$.
\end{enumerate}

\end{lemma}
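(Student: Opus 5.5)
The plan is to prove (ii) first, then (i), and finally deduce (iii) from (i) and (ii). Throughout, $p_->1$ is taken to mean $p^->1$, and $\Omega$ is bounded, so $|\Omega|<\infty$.

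\emph{Step (ii).} Fix $z$ with $R_p(z)$ small and split $\Omega$ into $\{|z|\ge1\}$ and $\{|z|<1\}$. On $\{|z|\ge1\}$ we have $|z|^{p^-}\le|z|^{p(x)}\le p(x)\,\frac{|z|^{p(x)}}{p(x)}$. This bound fails when $p$ is unbounded, because of the factor $p(x)$, so I will refine the split using a level $M$:
\[
\int_\Omega|z|^{p^-}\le \int_{\{|z|<\eta\}}|z|^{p^-}+\int_{\{|z|\ge\eta\}}|z|^{p^-}.
\]
The first integral is at most $\eta^{p^-}|\Omega|$. For the second, take $\eta\le1$ and apply Young's inequality in the form $|z|^{p^-}\le \frac{p^-}{p(x)}|z|^{p(x)}+\bigl(1-\frac{p^-}{p(x)}\bigr)$. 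This is simply the weighted AM--GM with the constant $1$. It gives
\[
\int_\Omega|z|^{p^-}\le p^-R_p(z)+|\{|z|\ge\eta\}|,
\]
which again leaves a measure term. That term is controlled by convergence in measure. On $\{|z|\ge\eta\}$ one has $|z|^{p}/p\ge \eta^{p}/p$, and $\eta^{p(x)}/p(x)>0$ pointwise. Hence the measure $A\mapsto\int_A\eta^p/p$ is equivalent to Lebesgue measure on the bounded set $\Omega$, and the absolute continuity argument of Lemma~\ref{lqconvergence} gives $|\{|z_j|\ge\eta\}|\to0$. The cleaner route for the sequence $z_j$ is therefore:
\begin{enumerate}
\item[(a)] prove $|z_j|\to0$ in measure, exactly as in Lemma~\ref{lqconvergence};
\item[(b)] on $\{|z_j|\ge1\}$ use $|z_j|^{p^-}\le \frac{p^-}{p}|z_j|^{p}\le p^-\frac{|z_j|^p}{p}$, which holds because $t^{p^-}\le t^{p}$ and $p^-\le p$, so this piece is $\le p^-R_p(z_j)$ and tends to $0$;
\item[(c)] on $\{|z_j|<1\}$, note that $|z_j|^{p^-}\le1$ and the functions tend to $0$ in measure on a finite measure space, so their integrals tend to $0$ by a Vitali/dominated-convergence argument along subsequences, which is enough since every subsequence has a further one with this property.
\end{enumerate}

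\emph{Step (i).} Let $(z_j)$ be $R_p$-Cauchy, meaning $R_p(z_j-z_k)\to0$. By (ii) the sequence is Cauchy in $(L^{p^-})^n$, so it has an $L^{p^-}$ limit $z$ and a subsequence converging a.e. Fatou's lemma then gives $R_p(z_j-z)\le\liminf_k R_p(z_j-z_k)$, which is small for large $j$. From this, $z\in(L^{p(\cdot)})^n$ by Proposition~\ref{X_rho-stability} (apply it to $R_p$, noting that $R_p(z_j)<\infty$ for some $j$ up to scaling), and $R_p(z_j-z)\to0$.

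\emph{Step (iii).} The hypothesis $\rho_{p^-,p}(u_j-u)\to0$ gives $\|u_j-u\|_{L^{p^-}}\to0$ from the first term, and $R_p(\nabla u_j-\nabla u)\to0$ from the second. Applying (ii) to $z_j=\nabla(u_j-u)$ yields $\|\nabla u_j-\nabla u\|_{L^{p^-}}\to0$, and hence $u_j\to u$ in $W^{1,p^-}$.

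The main obstacle is (ii), specifically the region where $|z_j|$ is small but $p(x)$ is huge, since there $R_p$ gives no control of $|z_j|^{p^-}$. The convergence-in-measure step (a) handles this region, and it works only because $\Omega$ is bounded and $p<\infty$ a.e.
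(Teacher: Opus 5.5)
\textbf{Gap in step (b).} This is the step you yourself identify as the crux, and as written it is false. The bound $|z_j|^{p^-}\le \frac{p^-}{p}|z_j|^{p}$ on $\{|z_j|\ge 1\}$ fails: at a point where $|z_j|=1$ it reads $1\le p^-/p(x)$, which is false wherever $p(x)>p^-$. Your justification cannot produce it. The facts $t^{p^-}\le t^{p}$ and $\frac{p^-}{p}t^{p}\le t^{p}$ are both upper bounds by $t^p$, so they do not chain into $t^{p^-}\le\frac{p^-}{p}t^p$.

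No constant repairs it either. The ratio of $|z|^{p^-}$ to $|z|^{p}/p$ is $p\,|z|^{p^{-}-p}$, which equals $p(x)$ on $\{|z|=1\}$ and is unbounded when $p^+=\infty$. Concretely, take $z=\mathbbm{1}_A$ with $A\subset\{p\ge M\}$ and $|A|>0$. Then $\int_{\{|z|\ge1\}}|z|^{p^-}=|A|$, while $p^-R_p(z)\le p^-|A|/M$.

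So the real difficulty is not where $|z_j|$ is small: there $|z_j|^{p^-}\le\eta^{p^-}$ regardless of $p$. It is where $|z_j|$ is of order one and $p$ is huge. On that region $R_p$ alone gives no control, and the measure term from (a) must enter there too, not only on $\{|z_j|<1\}$.

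\textbf{How to fix it.} The repair is already in your text. The weighted AM--GM bound $|z|^{p^-}\le p^-\frac{|z|^{p}}{p}+1$ gives $\int_{\{|z_j|\ge\eta\}}|z_j|^{p^-}\le p^-R_p(z_j)+|\{|z_j|\ge\eta\}|$. Add $\int_{\{|z_j|<\eta\}}|z_j|^{p^-}\le\eta^{p^-}|\Omega|$, a term your displayed estimate drops. Then use (a), let $j\to\infty$, and then let $\eta\downarrow0$; this proves (ii).

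This is a slightly leaner version of the paper's argument. The paper splits at $\alpha$ and $1+\alpha$ and absorbs the middle band into $(1+\alpha)^{p^-}|\{|z_j|>\alpha\}|$. On $\{|z_j|>1+\alpha\}$ it uses $\sup_{t\ge p^-}t(1+\alpha)^{p^{-}-t}<\infty$; the point is precisely to stay a fixed distance above $1$ before invoking $R_p$.

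\textbf{Parts (i) and (iii).} With (ii) in hand, your (i) (Cauchy in $L^{p^-}$, an a.e.\ convergent subsequence, then Fatou) and your (iii) are fine. The only difference from the paper is the ordering: the paper proves (i) first, directly from Cauchy-in-measure.

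There is one small omission in (i). Proposition~\ref{X_rho-stability} applied to $R_p$ only yields $z\in X_{R_p}$. To conclude $z\in(L^{p(\cdot)}(\Omega))^n$ you also need $X_{R_p}\subseteq X_{\rho_p}$. This is not automatic because of the weight $1/p$; the paper obtains it from $\rho_p(z/2)\le\bigl(\sup_{t\ge1}t2^{-t}\bigr)R_p(z)$.
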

\begin{proof}
First note that $R_p$ and $\rho_p$ generate the same modular space.  In
fact $R_p(z)\le\rho_p(z)$, while
\[
\rho_p(z/2)=\int_\Omega p(x)2^{-p(x)}
\frac{|z|^{p(x)}}{p(x)}\,dx\le C R_p(z),
\qquad C:=\sup_{t\ge1}t2^{-t}<\infty.
\]

We prove $(i)$ in the precise form needed here: if
$R_p(z_j-z_k)\to0$ as $j,k\to\infty$, then some
$z\in(L^{p(\cdot)}(\Omega))^n$ satisfies $R_p(z_j-z)\to0$.
Convergence to zero in $R_p$ implies convergence in measure.  Indeed,
for $\alpha>0$ and $M>1$, on
$\{|h|>\alpha\}\cap\{p\le M\}$ one has
\[
\frac{|h|^{p(x)}}{p(x)}\ge
c_{\alpha,M}:=\frac{\min\{\alpha,\alpha^M\}}{M}>0,
\]
whereas $|\{p>M\}|\to0$ as $M\to\infty$ because $p$ is finite almost
everywhere and $|\Omega|<\infty$.  Thus an $R_p$-Cauchy sequence is
Cauchy in measure.  Choose a subsequence $z_{j_k}\to z$ almost
everywhere.  Fatou's lemma gives, for each $j$,
\[
R_p(z_j-z)\le\liminf_{k\to\infty}R_p(z_j-z_{j_k}).
\]
The Cauchy property makes the left-hand side tend to zero with $j$.
The preceding comparison with $\rho_p$ also shows that $z$ belongs to
the same modular space.

For $(ii)$, $R_p(z_j)\to0$ first gives $z_j\to0$ in measure.  Put
$q=p^->1$ and fix $0<\alpha<1$. Split $\Omega$ into the sets where
$|z_j|\le\alpha$, where $\alpha<|z_j|\le1+\alpha$, and where
$|z_j|>1+\alpha$.  Then
\[
\begin{aligned}
\int_\Omega|z_j|^qdx
&\le \alpha^q|\Omega|
+(1+\alpha)^q|\{|z_j|>\alpha\}|\\
&\quad+C_{\alpha,q}R_p(z_j),
\end{aligned}
\]
where
$C_{\alpha,q}:=\sup_{t\ge q}t(1+\alpha)^{q-t}<\infty$.
Letting $j\to\infty$ and then $\alpha\downarrow0$ proves
$\|z_j\|_{L^q}\to0$.

Finally, $(iii)$ follows by applying $(ii)$ to the gradient term and
using the constant-exponent modular--norm equivalence for the
zeroth-order term.
\end{proof}
Let $S\subseteq W^{1,p^-,p(\cdot)}(\Omega)$ be a subspace. Henceforth, $\overline{S}^{p^-,p(\cdot)}$ will stand for the $\rho_{p^-,p}$-closure of $S$. For example, one could take $S=C^{\infty}_0(\Omega)$ or $S$ to be the subspace $W^{1,p^-,p(\cdot)}_{comp}(\Omega)$ consisting of all compactly supported functions in $W^{1,p^-,p(\cdot)}(\Omega)$. It is well known that for variable $p$ one may have the strict inclusion
\begin{equation}
\overline{C^{\infty}_0(\Omega)}^{p^-,p(\cdot)}\subsetneq \overline{W^{1,p^-,p(\cdot)}_{comp}(\Omega)}^{p^-,p(\cdot)};
\end{equation}
we refer the interested reader to \cite{H} for the details.

\begin{theorem}
If $S$ is as in the previous paragraph, then there holds the $\rho_{p^-,p}$-continuous inclusion
\begin{equation}
\overline{S}^{p^-,p(\cdot)}\subseteq \overline{S}^{W^{1,p^-}(\Omega)}.
\end{equation}
Here, $\overline{S}^{W^{1,p^-}(\Omega)}$ denotes the closure of $S$
in the standard norm topology of $W^{1,p^-}(\Omega)$.
\end{theorem}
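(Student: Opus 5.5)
The plan is to use the sequential characterization of the modular topology. Let $T=\overline{S}^{W^{1,p^-}(\Omega)}$. We must show two things. First, $T\cap W^{1,p^-,p(\cdot)}(\Omega)$ is $\tau_{\rho_{p^-,p}}$-closed and contains $S$, so the modular closure $\overline{S}^{p^-,p(\cdot)}$, being the smallest $\tau_{\rho_{p^-,p}}$-closed set containing $S$, is contained in it. Second, the inclusion map $\overline{S}^{p^-,p(\cdot)}\hookrightarrow \overline{S}^{W^{1,p^-}(\Omega)}$ is continuous from the modular topology to the $W^{1,p^-}$ norm topology.

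\emph{Closedness step.} By Proposition~\ref{rho-topology} and the remark following Definition~\ref{tau-open}, a set is $\tau_{\rho_{p^-,p}}$-closed if and only if it contains the limits of its $\rho_{p^-,p}$-convergent sequences. Take $(u_j)\subset T\cap W^{1,p^-,p(\cdot)}(\Omega)$ with $\rho_{p^-,p}(u_j-u)\to0$. Lemma~\ref{weighted-modular-completeness}(iii) then gives $u_j\to u$ in $W^{1,p^-}(\Omega)$. Since $T$ is norm closed in $W^{1,p^-}(\Omega)$, we get $u\in T$. Hence $T\cap W^{1,p^-,p(\cdot)}(\Omega)$ is modularly closed, and since it contains $S$, it contains $\overline{S}^{p^-,p(\cdot)}$. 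This proves the set inclusion.

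\emph{Continuity step.} The same argument shows that the identity map from $(W^{1,p^-,p(\cdot)}(\Omega),\tau_{\rho_{p^-,p}})$ into $W^{1,p^-}(\Omega)$ with its norm is sequentially continuous. Let $C$ be norm closed. A $\rho_{p^-,p}$-convergent sequence in $C\cap W^{1,p^-,p(\cdot)}$ converges in $W^{1,p^-}$ by Lemma~\ref{weighted-modular-completeness}(iii), so its limit lies in $C$. Hence preimages of norm-closed sets are modularly closed, and the inclusion is continuous. This mirrors the proof of Theorem~\ref{continuitysequential}, with the target now carrying a norm topology; since continuity is phrased through closed sets and sequences, that proof goes through unchanged. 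Restricting to $\overline{S}^{p^-,p(\cdot)}$ with the subspace topology preserves continuity. The argument does not need $S$ to be a subspace, but in that case Proposition~\ref{subspace} additionally shows the modular closure is a linear subspace, so the inclusion is a continuous linear embedding.

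\emph{Main obstacle.} The only real issue is Lemma~\ref{weighted-modular-completeness}(iii), namely that modular convergence for $\rho_{p^-,p}$ implies $W^{1,p^-}$ norm convergence. Two points need care. First, the zeroth-order term: $\int|u_j-u|^{p^-}\to0$ is exactly $L^{p^-}$ norm convergence because $p^-$ is constant. Second, the gradient term: $R_p(\nabla u_j-\nabla u)\to0$ gives $\|\nabla u_j-\nabla u\|_{L^{p^-}}\to0$ by part (ii), which requires $p^->1$ and $|\Omega|<\infty$. I would simply check that these hypotheses, which are in force from Definition~\ref{sobolev}, hold, and then invoke the lemma.
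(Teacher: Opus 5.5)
Your proposal is correct and follows essentially the same route as the paper. Both arguments use Lemma~\ref{weighted-modular-completeness}(iii) to get sequential continuity of the identity into $W^{1,p^-}(\Omega)$, conclude that the preimage of the norm-closed set $\overline{S}^{W^{1,p^-}(\Omega)}$ is modularly closed and contains $S$, and hence that it contains the modular closure. The only difference is that you verify closedness directly through the sequential characterization of Proposition~\ref{rho-topology}, whereas the paper cites Theorem~\ref{continuitysequential}, which is stated for modular rather than normed targets; your version is slightly more careful on this point.
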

\begin{proof}
Part $(iii)$ of the preceding lemma shows that the identity from the
modular space into $W^{1,p^-}(\Omega)$ is sequentially
continuous. Theorem~\ref{continuitysequential} makes it continuous.
The inverse image of the norm-closed set
$\overline S^{W^{1,p^-}(\Omega)}$ is therefore modularly closed and
contains $S$; it must contain $\overline S^{p^-,p(\cdot)}$.
\end{proof}

\begin{corollary}\label{subsetofw1p0}
If $S$ consists of functions compactly supported in $\Omega$, then  there holds the $\rho_{p^-,p}$-continuous inclusion
\begin{equation}
\overline{S}^{p^-,p(\cdot)}\subseteq W^{1,p^-}_0(\Omega).
\end{equation}
\end{corollary}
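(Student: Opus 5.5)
The plan is to apply the preceding theorem with this particular $S$, and then to show that the $W^{1,p^-}(\Omega)$-norm closure of $S$ sits inside $W^{1,p^-}_0(\Omega)$. By that theorem,
\[
\overline{S}^{p^-,p(\cdot)}\subseteq \overline{S}^{W^{1,p^-}(\Omega)},
\]
and the inclusion is $\rho_{p^-,p}$-continuous. Indeed, the identity map from $(W^{1,p^-,p(\cdot)}(\Omega),\tau_{\rho_{p^-,p}})$ into $W^{1,p^-}(\Omega)$ with its norm is continuous, by Lemma~\ref{weighted-modular-completeness}$(iii)$ together with Theorem~\ref{continuitysequential}. Continuity passes to restrictions, so the target may be shrunk to any norm-closed subspace containing the image. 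It therefore suffices to prove
\[
\overline{S}^{W^{1,p^-}(\Omega)}\subseteq W^{1,p^-}_0(\Omega).
\]

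For this, recall that $W^{1,p^-}_0(\Omega)$ is by definition the $W^{1,p^-}$-norm closure of $C_0^\infty(\Omega)$. Since it is norm-closed, it is enough to show $S\subseteq W^{1,p^-}_0(\Omega)$. Take $u\in S$. Then $u\in W^{1,p^-}(\Omega)$, because $u\in L^{p^-}$ and $\nabla u\in L^{p(\cdot)}\subset L^{p^-}$ on the bounded set $\Omega$ by Lemma~\ref{weighted-modular-completeness}$(ii)$. Moreover $\operatorname{supp}u=K$ is compact in $\Omega$. Now mollify: $u_\varepsilon=u*\eta_\varepsilon$ with $\varepsilon<\operatorname{dist}(K,\partial\Omega)/2$. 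This gives $u_\varepsilon\in C_0^\infty(\Omega)$ and $u_\varepsilon\to u$ in $W^{1,p^-}(\Omega)$, since $p^-$ is a constant exponent and the classical theory applies. Hence $u\in W^{1,p^-}_0(\Omega)$.

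Combining the two steps gives $\overline{S}^{p^-,p(\cdot)}\subseteq W^{1,p^-}_0(\Omega)$. Continuity of the inclusion follows from continuity of the identity into $W^{1,p^-}(\Omega)$, restricted to the subspace. The only point needing any care is the mollification step, and in particular checking that compact support plus $W^{1,p^-}$ regularity suffices. This is standard for a constant exponent $p^->1$, so I expect no real obstacle. The variable exponent is sidestepped entirely by the preceding theorem.
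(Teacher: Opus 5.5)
Your proposal is correct and follows the paper's route. The preceding theorem gives $\overline{S}^{p^-,p(\cdot)}\subseteq\overline{S}^{W^{1,p^-}(\Omega)}$ continuously, and the paper's one-line justification (compactly supported $W^{1,p^-}$ functions are norm limits of $C_0^\infty(\Omega)$ functions) is exactly your mollification step, which places $S$, and hence its norm closure, inside the closed space $W^{1,p^-}_0(\Omega)$; as a minor point, the embedding $L^{p(\cdot)}(\Omega)\subset L^{p^-}(\Omega)$ follows most directly from $\int_\Omega|\lambda z|^{p^-}\,dx\le|\Omega|+\rho_p(\lambda z)$ rather than from Lemma~\ref{weighted-modular-completeness}$(ii)$, which is stated for sequences tending to zero.
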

\begin{proof}
The result follows by observing that any compactly supported function in $W^{1,p^-}_0(\Omega)$ can be approximated in the norm topology by $C^{\infty}_0(\Omega)$ functions.  
\end{proof}
\subsection{The Dirichlet integral}
Let $\Omega\subset {\mathbb R}^n$ be a bounded, $C^1$ domain, $p$ as in the previous section. In what follows it will be assumed that $p\in C(\Omega)$ and that $p^->1$. In particular, the results in this section cover the case $p^+=\sup\limits_{x\in \Omega}p(x)=+\infty.$
\begin{definition}
Let $\varphi \in W^{1,p^-,p(\cdot)}(\Omega)$ with $\int\limits_{\Omega}\frac{|\nabla \varphi|^p}{p}dx<\infty$ and $S\subset W^{1,p^-,p(\cdot)}(\Omega)$ be a subspace of compactly supported functions. Consider the functional
\begin{align}\label{Dirin}
F&: \overline{S}^{p^-,p(\cdot)}\rightarrow [0,\infty]\\ \nonumber
F(u)&=R_p(\nabla (u-\varphi))=\int\limits_{\Omega}\frac{|\nabla (u-\varphi)|^p}{p}dx.
\end{align}
\end{definition}

We move next to Theorem \ref{existenceofminimizer}. Its proof relies entirely on the topology generated by the modular $\rho_{p^-,p}$ on $W^{1,p^-,p(\cdot)}(\Omega)$. Theorem \ref{existenceofminimizer} generalizes \cite[Theorem 7.2]{AOJA} in two aspects: The functional (\ref{Dirin}) is defined on a wider class of functions and the existence of the minimizer does not require the condition $p^->n$.

\begin{theorem}\label{existenceofminimizer}
There exists a unique minimizer $u\in \overline{S}^{p^-,p(\cdot)}$ of the Dirichlet integral (\ref{Dirin}).
\end{theorem}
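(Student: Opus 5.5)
We use the direct method, with the uniform convexity of Theorem~\ref{ucmodular} standing in for weak compactness, which is unavailable when $p^+=\infty$. Let $m:=\inf\{F(u):u\in\overline{S}^{p^-,p(\cdot)}\}$. Since $0\in S$, we have $m\le F(0)=R_p(\nabla\varphi)<\infty$. Take a minimizing sequence $(u_j)\subset\overline{S}^{p^-,p(\cdot)}$ and put $z_j:=\nabla(u_j-\varphi)$, so that $R_p(z_j)\to m$. If $m=0$, then $R_p(z_j)\to 0$ already; the argument below covers this case as well.

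The key step is to show that $(z_j)$ is $R_p$-Cauchy. Because $\overline{S}^{p^-,p(\cdot)}$ is a vector subspace (Proposition~\ref{subspace}), the midpoint $(u_j+u_k)/2$ is admissible. Hence
\[
R_p\Bigl(\tfrac{z_j+z_k}{2}\Bigr)\ge m .
\]
Suppose that along some pairs $R_p\bigl(\tfrac{z_j-z_k}{2}\bigr)\ge\eta>0$. For $j,k$ large we have $\tfrac{R_p(z_j)+R_p(z_k)}{2}\le m+1$. We then invoke Theorem~\ref{ucmodular} with $\varepsilon=\eta/(m+1)$, which gives
\[
m\le(1-\delta)\frac{R_p(z_j)+R_p(z_k)}{2}\longrightarrow(1-\delta)m .
\]
This is a contradiction when $m>0$. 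When $m=0$, convexity gives $R_p\bigl(\tfrac{z_j-z_k}{2}\bigr)\le\tfrac12\bigl(R_p(z_j)+R_p(z_k)\bigr)\to0$. So in either case $R_p\bigl(\tfrac{z_j-z_k}{2}\bigr)\to0$.

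The factor $1/2$ is harmless: the sequence $(z_j/2)$ is $R_p$-Cauchy, so we work with $w_j:=u_j/2$. This is the point I expect to be most delicate, since $\Delta_2$ fails and we cannot rescale convergence freely. The plan is to pass to limits at the scale of $1/2$ throughout and rescale only at the end, using that closures are subspaces.

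By Lemma~\ref{weighted-modular-completeness}(i) there is $z$ with $R_p(z_j/2-z)\to0$. To get the zeroth-order term, apply part (ii) to the differences $\nabla(w_j-w_k)$. This shows that $\nabla w_j$ is Cauchy in $L^{p^-}$. Since $w_j$ lies in $W^{1,p^-}_0(\Omega)$ by Corollary~\ref{subsetofw1p0}, Poincaré's inequality makes $(w_j)$ Cauchy in $W^{1,p^-}_0$, with some limit $w$. Then $\nabla w=z+\nabla\varphi/2$ almost everywhere, by comparing the $L^{p^-}$ limit with the limit in measure. Since $\|w_j-w\|_{L^{p^-}}\to0$ gives $\int|w_j-w|^{p^-}\to0$, we obtain $\rho_{p^-,p}(w_j-w)\to0$. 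Thus $w\in\overline{S}^{p^-,p(\cdot)}$, because the closure is $\tau_\rho$-closed and Theorem~\ref{convergencecharacterization} applies. Set $u:=2w$, which again lies in $\overline{S}^{p^-,p(\cdot)}$ because the closure is a subspace.

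Next, $u$ is a minimizer. We have $\nabla(u_j-\varphi)\to\nabla(u-\varphi)$ in measure, along a subsequence almost everywhere. Fatou's lemma then gives $F(u)\le\liminf F(u_j)=m$, so $F(u)=m$.

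Finally, uniqueness. Suppose $u,\tilde u$ are two minimizers and $u\neq\tilde u$. Then $\nabla(u-\tilde u)\ne0$, because a function in $W^{1,p^-}_0$ with zero gradient vanishes. Hence $R_p\bigl(\tfrac{z-\tilde z}{2}\bigr)>0$. If $m>0$, Theorem~\ref{ucmodular} applied to $z=\nabla(u-\varphi)$ and $\tilde z=\nabla(\tilde u-\varphi)$ makes the admissible midpoint $(u+\tilde u)/2$ give a value below $m$, which is impossible. If $m=0$, then $\nabla u=\nabla\varphi=\nabla\tilde u$ almost everywhere, so again $u=\tilde u$.
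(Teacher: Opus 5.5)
Your proposal is correct and follows the paper's proof essentially step for step: the uniform-convexity argument showing that $\tfrac12\nabla(u_j-\varphi)$ is $R_p$-Cauchy, completeness at scale $\tfrac12$ via Lemma~\ref{weighted-modular-completeness}, identification of the limit through Poincar\'e's inequality in $W^{1,p^-}_0(\Omega)$, rescaling by $2$ inside the modular closure, and the final Fatou argument. The only difference, and an inessential one, is in the uniqueness step: for $m>0$ you apply Theorem~\ref{ucmodular} again, whereas the paper uses pointwise strict convexity of $z\mapsto|z|^{p(x)}$; both arguments work.
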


\begin{proof}
The value $d:=\inf_{\overline{S}^{p^-,p(\cdot)}}F$ is finite because $0\in \overline{S}^{p^-,p(\cdot)}$ and
$F(0)=R_p(\nabla\varphi)<\infty$. Choose $u_j\in \overline{S}^{p^-,p(\cdot)}$ with
$F(u_j)\to d$, and put
\[
a_j:=\nabla(u_j-\varphi).
\]
We first prove
\begin{equation}\label{half-gradient-cauchy}
R_p\left(\frac{a_j-a_k}{2}\right)\longrightarrow0
\qquad(j,k\to\infty).
\end{equation}
If $d=0$, this follows directly from convexity and evenness of $R_p$.
If $d>0$ and~\eqref{half-gradient-cauchy} fails, there are $\eta>0$
and arbitrarily large $j,k$ for which its left-hand side is at least
$\eta$. Since $R_p(a_j),R_p(a_k)\to d$, Theorem~\ref{ucmodular}, with
a fixed positive ratio $\varepsilon$, gives
\[
R_p\left(\frac{a_j+a_k}{2}\right)
\le(1-\delta)\frac{R_p(a_j)+R_p(a_k)}2<d
\]
for large $j,k$. But $(u_j+u_k)/2\in \overline{S}^{p^-,p(\cdot)}$, and the left-hand side is
$F((u_j+u_k)/2)$, contradicting the definition of $d$.

Set $g_j=a_j/2$. Lemma~\ref{weighted-modular-completeness}$(i)$ gives a
vector field $g$ such that
\begin{equation}\label{convergnabla}
R_p(g_j-g)\longrightarrow0.
\end{equation}
Moreover,
\[
g_j-g_k=\frac{\nabla u_j-\nabla u_k}{2}.
\]
Lemma~\ref{weighted-modular-completeness}$(ii)$,
Corollary~\ref{subsetofw1p0}, and Poincar\'e's inequality show that
$q_j:=u_j/2$ is Cauchy in $W^{1,p^-}_0(\Omega)$. Let
$q_j\to q$ in that space. Since
\[
g_j=\nabla q_j-\tfrac12\nabla\varphi,
\]
the $L^{p^-}$ limits identify
$g=\nabla q-\tfrac12\nabla\varphi$. Consequently
\[
\rho_{p^-,p}(q_j-q)
=\|q_j-q\|_{L^{p^-}}^{p^-}+R_p(g_j-g)\longrightarrow0.
\]
The subspace $\overline{S}^{p^-,p(\cdot)}$ is modularly closed, so $q\in \overline{S}^{p^-,p(\cdot)}$; since it is a vector
subspace, $u:=2q\in \overline{S}^{p^-,p(\cdot)}$. Its associated gradient is
\[
\nabla(u-\varphi)=2g.
\]

From~\eqref{convergnabla}, a subsequence of $g_j$ converges almost
everywhere to $g$. Fatou's lemma therefore yields
\[
F(u)=R_p(2g)
\le\liminf_jR_p(2g_j)
=\liminf_jF(u_j)=d.
\]
Thus $u$ is a minimizer.

If $u,v\in \overline{S}^{p^-,p(\cdot)}$ are two minimizers, strict convexity of
$z\mapsto |z|^{p(x)}$ shows that their midpoint has energy strictly
less than $d$ unless $\nabla u=\nabla v$ almost everywhere. In the
latter case $u-v\in \overline{S}^{p^-,p(\cdot)}\subset W^{1,p^-}_0(\Omega)$, and Poincar\'e's
inequality gives $u=v$. This proves uniqueness.
\end{proof}

\smallskip

Theorem~\ref{existenceofminimizer} yields the following result, which seems to benew in the literature. We emphasize that Theorem~7.2 in \cite{AOJA} does not cover the case $p_-\leq n$, nor does it address minimization of the Dirichlet integral over arbitrary subspaces $S$ consisting of compactly supported functions. The qualifier ``$S$-weak'' records the testing class and is
part of the statement; different choices of $S$ need not lead to the same
notion of solution.

\begin{corollary}\label{existencedp}
Let $\Omega\subset\mathbb{R}^n$ be a bounded $C^1$ domain, and let
$p\in C(\Omega)$ satisfy
\[
p^-:=\inf_{x\in\Omega}p(x)>1.
\]
Let $S$ be an arbitrary subspace consisting of compactly supported functions.
Then, for any $\varphi\in W^{1,p^-,p(\cdot)}(\Omega)$ satisfying
\[
\int_\Omega \frac{|\nabla\varphi|^{p}}{p}\,dx<\infty,
\]
there exists a unique $S$-weak solution in the affine class
$\varphi- \overline{S}^{p^-,p(\cdot)}$, denoted by
$w\in W^{1,p^-,p(\cdot)}(\Omega)$, of the Dirichlet problem
\begin{equation}\label{DP}
\begin{cases}
\Delta_{p(\cdot)}w
=
\operatorname{div}\left(
|\nabla w|^{p(\cdot)-2}\nabla w
\right)=0
& \text{in }\Omega,\\
w=\varphi
& \text{on }\partial\Omega.
\end{cases}
\end{equation}
More precisely, the solution of the corresponding minimization problem
yields a function $w$ satisfying
\begin{equation}\label{weakformulation}
\int_\Omega
|\nabla w|^{p-2}\nabla w\cdot\nabla\phi\,dx=0,
\qquad
\text{for every }\phi\in S.
\end{equation}
\end{corollary}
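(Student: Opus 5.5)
The approach is to set $w:=\varphi-u$, where $u\in\overline{S}^{p^-,p(\cdot)}$ is the unique minimizer of $F$ given by Theorem~\ref{existenceofminimizer}, and then derive the Euler identity \eqref{weakformulation} from a first-variation argument. Since $F(u)=R_p(\nabla(u-\varphi))=R_p(\nabla w)$ and $R_p$ is even, $w$ lies in the affine class $\varphi-\overline{S}^{p^-,p(\cdot)}$ and minimizes $v\mapsto R_p(\nabla v)$ over that class. Uniqueness in the class is inherited directly from the uniqueness part of Theorem~\ref{existenceofminimizer}. The reason is that any $S$-weak solution, that is, any solution of \eqref{weakformulation} in the class, is a minimizer. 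To see this one uses convexity: for $v=w+h$ one has $R_p(\nabla v)\ge R_p(\nabla w)+\int|\nabla w|^{p-2}\nabla w\cdot\nabla h$. Applying this requires the identity for $h\in\overline{S}^{p^-,p(\cdot)}$ and not only for $h\in S$. I would therefore either define the $S$-weak solution as the minimizer, which is how the statement phrases it ("the solution of the corresponding minimization problem"), or extend the identity by the density argument below.

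For the Euler identity, fix $\phi\in S$ and $t\in(-1,1)\setminus\{0\}$. Since $\overline{S}^{p^-,p(\cdot)}$ is a vector subspace (Proposition~\ref{subspace}) containing $S$, the function $w+t\phi$ stays in the class, and hence $R_p(\nabla w+t\nabla\phi)\ge R_p(\nabla w)$. Dividing by $t$ and letting $t\to0^{\pm}$ gives the identity, provided one may differentiate under the integral. Pointwise, $t\mapsto |\xi+t\eta|^{p(x)}/p(x)$ is convex and differentiable with derivative $|\xi|^{p-2}\xi\cdot\eta$ at $t=0$. The convex difference quotients are monotone in $t$, so monotone convergence applies once a single quotient is integrable, for instance at $t=\pm1$, or at $t=\pm\tfrac12$ after rescaling.

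I expect the main obstacle to be this integrability, because $p^+$ may be infinite: $R_p(\nabla w\pm\nabla\phi)$ need not be finite even though $R_p(\nabla w)<\infty$. Here I would use that $\phi$ is compactly supported and $p\in C(\Omega)$. On $K=\operatorname{supp}\phi$ the exponent satisfies $p\le p_K<\infty$, so $|a+b|^{p}\le 2^{p_K-1}(|a|^p+|b|^p)$ on $K$. Outside $K$ the integrand does not depend on $t$. The remaining point is $\int_K|\nabla\phi|^{p}<\infty$, which should follow from $\phi\in W^{1,p^-,p(\cdot)}$: since $\nabla\phi\in L^{p(\cdot)}$, some $\lambda>0$ has $\rho_p(\lambda\nabla\phi)<\infty$, and with $p$ bounded on $K$ this gives finiteness at scale $1$. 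Convexity then bounds the quotients by $R_p(\nabla w\pm\nabla\phi)-R_p(\nabla w)$, which is finite, and dominated or monotone convergence yields
\[
\int_\Omega|\nabla w|^{p-2}\nabla w\cdot\nabla\phi\,dx=0 .
\]
The integrand here is integrable by Young's inequality on $K$, using $|\xi|^{p-1}|\eta|\le |\xi|^p+|\eta|^p$.
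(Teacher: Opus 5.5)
Your proposal is correct and follows the paper's own proof. You take $w=\varphi-u$ with $u$ the minimizer from Theorem~\ref{existenceofminimizer}, vary along $\phi\in S$ using that $\overline{S}^{p^-,p(\cdot)}$ is a subspace, dominate the difference quotients on $\operatorname{supp}\phi$ where continuity of $p$ bounds the exponent, and read uniqueness as uniqueness of the minimizer in the affine class, exactly as the paper does. The paper commits explicitly to that last reading, so drop your alternative of extending the identity from $S$ to $\overline{S}^{p^-,p(\cdot)}$: your sketch never supplies that density argument, and it is not automatic when $p^+=\infty$, because $R_p$-smallness of $\nabla h$ does not control $\int_\Omega|\nabla w|^{p-1}|\nabla h|\,dx$ (Young's inequality leaves an unbounded factor $p-1$ in front of $|\nabla w|^{p}/p$).
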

\begin{proof}
Let $u\in \overline{S}^{p^-,p(\cdot)}$ be the unique minimizer in
Theorem~\ref{existenceofminimizer} and set $w=\varphi-u$. Then
$w-\varphi=-u\in \overline{S}^{p^-,p(\cdot)}\subset W^{1,p^-}_0(\Omega)$, which is the asserted
boundary condition in the Sobolev-trace sense.

Fix $\phi\in S$. Its support is compact, so continuity of $p$ makes the
exponent bounded there. Both $\nabla w$ and $\nabla\phi$ have finite
unscaled $p(\cdot)$-modular on that compact set. The usual difference
quotient is therefore dominated by an integrable multiple of
$|\nabla w|^{p}+|\nabla\phi|^{p}$. Since $u+t\phi\in \overline{S}^{p^-,p(\cdot)}$ for every
$t\in\mathbb R$, differentiation at the minimum gives
\[
0=\left.\frac{d}{dt}F(u+t\phi)\right|_{t=0}
=-\int_\Omega |\nabla w|^{p-2}\nabla w\cdot\nabla\phi\,dx.
\]
Thus~\eqref{weakformulation} is well defined and holds for every
$\phi\in S$. Uniqueness here is uniqueness in the variational class
$\varphi-\overline{S}^{p^-,p(\cdot)}$; it is exactly the uniqueness established in
Theorem~\ref{existenceofminimizer}. No broader uniqueness claim for
formal weak solutions with a different testing class is intended.
\end{proof}
In the light of \cite[Theorem 7.2]{AOJA}, the following proposition is clear:
\begin{proposition}
Under the assumptions of Theorem \ref{existenceofminimizer}, if in addition $p^->n$, the minimizer belongs to the $\tau_{1,p}$-closure of the subspace $S$, where $\tau_{1,p}$ is the topology generated by the modular  $\rho_{1,p}$ on $W^{1,p(\cdot)}(\Omega)$, defined by
\begin{equation}
\rho_{1,p}(u)=\int\limits_{\Omega}\frac{|u|^p+|\nabla u|^p}{p}dx.
\end{equation}
\end{proposition}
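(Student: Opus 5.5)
The plan is to transfer the modular-closure statement of Theorem~\ref{existenceofminimizer} to the finer modular $\rho_{1,p}$. The result invoked is \cite[Theorem 7.2]{AOJA}: for $p^->n$ it produces a minimizer of the Dirichlet integral over the $\tau_{1,p}$-closure of $S$. Theorem~\ref{existenceofminimizer} gives uniqueness in the larger class $\overline{S}^{p^-,p(\cdot)}$. So it suffices to show two things: the $\tau_{1,p}$-closure of $S$ sits inside $\overline{S}^{p^-,p(\cdot)}$, and the minimizers of the two problems coincide.

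\textbf{Step 1: comparison of the two modular topologies.} I would show that the identity from $(W^{1,p(\cdot)}(\Omega),\tau_{1,p})$ into $(W^{1,p^-,p(\cdot)}(\Omega),\tau_{\rho_{p^-,p}})$ is sequentially continuous. The gradient parts of $\rho_{1,p}$ and $\rho_{p^-,p}$ are literally the same term $R_p(\nabla\cdot)$. For the zeroth-order term, suppose $\int_\Omega |u_j-u|^{p}/p\,dx\to0$. The same splitting as in Lemma~\ref{weighted-modular-completeness}(ii), applied to $u_j-u$ in place of $z_j$, gives $\|u_j-u\|_{L^{p^-}}\to0$, hence $\rho_{p^-,p}(u_j-u)\to0$. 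Theorem~\ref{continuitysequential} then upgrades sequential continuity to continuity. It follows that the preimage of the $\rho_{p^-,p}$-closed set $\overline{S}^{p^-,p(\cdot)}$ is $\tau_{1,p}$-closed and contains $S$, so it contains the $\tau_{1,p}$-closure of $S$.

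\textbf{Step 2: identifying the minimizers.} By \cite[Theorem 7.2]{AOJA}, under $p^->n$ there is a minimizer $\tilde u$ of $F$ over the $\tau_{1,p}$-closure of $S$. By Step~1, $\tilde u\in\overline{S}^{p^-,p(\cdot)}$. To conclude that $\tilde u$ is the unique minimizer $u$ from Theorem~\ref{existenceofminimizer}, I would show that the two infima agree, $\inf_{\overline{S}^{\tau_{1,p}}}F=\inf_{\overline{S}^{p^-,p(\cdot)}}F$. Once that holds, $\tilde u$ attains the common infimum inside $\overline{S}^{p^-,p(\cdot)}$, and uniqueness forces $u=\tilde u\in\overline{S}^{\tau_{1,p}}$.

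\textbf{Main obstacle.} The hard part is the equality of infima, which needs the $\tau_{1,p}$-closure to be rich enough. I would first try to show that elements of $\overline{S}^{p^-,p(\cdot)}$ with finite energy actually lie in the $\tau_{1,p}$-closure. The idea is to use $p^->n$: Morrey's embedding makes these functions continuous and bounded, and for bounded $u$ on bounded $\Omega$, $\int |u|^p/p$ is controlled by the sup norm. Hence $\rho_{1,p}$-convergence should follow from $\rho_{p^-,p}$-convergence together with uniform convergence. Uniform convergence in turn comes from the $W^{1,p^-}$ convergence given by Lemma~\ref{weighted-modular-completeness}(iii) via Morrey. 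Alternatively, if \cite[Theorem 7.2]{AOJA} already asserts that its minimizer is characterized by the Euler identity (\ref{weakformulation}) with test class $S$, I would instead compare Euler equations and use the strict-convexity uniqueness argument of Theorem~\ref{existenceofminimizer}.
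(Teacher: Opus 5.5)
\textbf{There is a genuine gap after Step~1.} Your Step~1 is correct, but the rest does not close.

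Step~2 rests on two things you do not have. First, it needs a minimizer over the $\tau_{1,p}$-closure of an \emph{arbitrary} $S$ from \cite[Theorem 7.2]{AOJA}. The paper itself stresses, just before Corollary~\ref{existencedp}, that this theorem does not address arbitrary subspaces $S$. Second, it needs the equality of the two infima, which you leave open.

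Your Morrey sketch for the infima derives ``$\rho_{p^-,p}$-convergence implies $\rho_{1,p}$-convergence.'' It then wants to conclude that points of $\overline{S}^{p^-,p(\cdot)}$ lie in $\overline{S}^{\tau_{1,p}}$. If that passage is meant by approximating such a point with a sequence from $S$, it fails. For $p^+=\infty$ these sequential topologies are not first-countable, and closure points need not be sequential limits; this is exactly the mechanism in the proof of Theorem~\ref{non-first-countable}.

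The Euler-identity alternative has the same defect. Comparing the two minimizers means testing with $u-\tilde u$, which lies in the closure of $S$ rather than in $S$. For unbounded $p$ the identity need not even make sense for such test functions, let alone pass to the limit.

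\textbf{The repair uses your own ingredients and makes Step~2 and \cite{AOJA} unnecessary.}
\begin{enumerate}
\item[(a)] Suppose $\rho_{p^-,p}(u_j-u)\to0$. Lemma~\ref{weighted-modular-completeness}(iii) gives $u_j\to u$ in $W^{1,p^-}(\Omega)$.
\item[(b)] Morrey, with $p^->n$ on the bounded $C^1$ domain, gives $\varepsilon_j:=\|u_j-u\|_\infty\to0$.
\item[(c)] For $\varepsilon_j\le1$ one has $\int_\Omega|u_j-u|^{p}/p\,dx\le|\Omega|\,\varepsilon_j^{p^-}/p^-$. The gradient terms of the two modulars are identical, so $\rho_{1,p}(u_j-u)\to0$.
\item[(d)] Morrey also gives $W^{1,p^-,p(\cdot)}(\Omega)\subset L^\infty(\Omega)$. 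Hence $W^{1,p^-,p(\cdot)}(\Omega)=W^{1,p(\cdot)}(\Omega)$ as sets, and $S$ lies in the domain of $\rho_{1,p}$.
\item[(e)] Apply Theorem~\ref{continuitysequential} to the identity in the reverse direction, exactly as in your Step~1. Alternatively, combine (c) with Step~1 and Proposition~\ref{rho-topology}: closed sets are exactly the sequentially closed ones, so $\tau_{1,p}=\tau_{\rho_{p^-,p}}$.
\end{enumerate}
Either way, $\overline{S}^{\tau_{1,p}}=\overline{S}^{p^-,p(\cdot)}$. The minimizer of Theorem~\ref{existenceofminimizer} therefore lies in the $\tau_{1,p}$-closure directly, with no second minimizer and no comparison of infima.

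The paper gives no argument beyond calling the statement clear in light of \cite[Theorem 7.2]{AOJA}. The route above is self-contained and shows that for $p^->n$ the two closure procedures agree for every $S$.
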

\begin{remark}{\normalfont A fundamental observation is in order here.  
In spite of the standard look of the weak formulation (\ref{weakformulation}), a sensitive point should be underlined: the class of admissible test functions $\phi$ used in the weak formulation does matter. For example, a solution might satisfy (\ref{weakformulation}) for all $\phi \in C^{\infty}_0(\Omega)$ but not for all $\phi\in W^{1,p^-,p(\cdot)}_{comp}(\Omega)$. Hence, the problem (\ref{DP}) is ambiguous, a fact already observed before. The reason behind this phenomenon is the complicated density behavior of smooth functions in function spaces of variable exponent. See \cite{BDS} and the references therein for very pathological examples and counterexamples involving these ideas, as well as for a thorough discussion of Lavrentiev's phenomenon. }
\end{remark}

\begin{remark}{\normalfont

For any exponent $p$, let $V_0^{1,p(\cdot)}(\Omega)$ and
$U_0^{1,p(\cdot)}(\Omega)$ denote the $\rho_{1,p}$-closures of
$C_0^{\infty}(\Omega)$ and $W_{\mathrm{comp}}^{1,p(\cdot)}(\Omega)$,
respectively. An explicit example of a domain $\Omega$, a variable
exponent $p$ with $p^->n$, and a function
\[
\varphi\in U_0^{1,p(\cdot)}(\Omega)\setminus V_0^{1,p(\cdot)}(\Omega)
\]
was constructed in \cite{Has}.\\
For this example, the minimizer of the Dirichlet integral
\eqref{Dirin} over $U_0^{1,p(\cdot)}(\Omega)$, given by Theorem
\ref{existenceofminimizer}, is $\varphi$, whereas the minimizer over
$V_0^{1,p(\cdot)}(\Omega)$ obtained in \cite{AOJA} must necessarily be
different from $\varphi$. Thus, the two closure procedures lead to
different admissible classes and, consequently, to different minimizers
of the corresponding variational problem.\\
In particular, this phenomenon yields an example of a
$C_0^{\infty}(\Omega)$-weak solution $w$ of the Dirichlet problem
\eqref{DP} which is not a $W_{\mathrm{comp}}^{1,p(\cdot)}(\Omega)$-weak
solution of the same problem. Hence, the distinction between these
closure procedures has a direct effect on the corresponding notion of
weak solution. See also \cite{BDS,H} for further discussions and
examples on this theme.

}\end{remark}


\medskip\medskip

\begin{thebibliography}{999}

\bibitem{BDS}
A. Kh. Balci, L. Diening, M. Surnachev,
\emph{New examples on Lavrentiev gap using fractals},
Calc. Var. Partial Differential Equations \textbf{59} (2020), 180.
https://doi.org/10.1007/s00526-020-01818-1

%\bibitem{birnbaum_orlicz_1931}
%Z. Birnbaum, W. Orlicz,
%\emph{\"Uber die Verallgemeinerung des Begriffes der zueinander konjugierten Potenzen},
%Studia Math. \textbf{3} (1931), 1--67.

%\bibitem{chen}
%S. Chen,
%\emph{Geometry of Orlicz spaces},
%Dissertationes Math. \textbf{356} (1996), 4--205.

\bibitem{DHHR}
L. Diening, P. Harjulehto, P. H\"ast\"o, M. R\r{u}\v{z}i\v{c}ka,
\emph{Lebesgue and Sobolev Spaces with Variable Exponents},
Lecture Notes in Mathematics, Vol. 2017, Springer, Berlin, 2011.

%\bibitem{Diestel}
%J. Diestel,
%\emph{Sequences and Series in Banach Spaces},
%Graduate Texts in Mathematics, Vol. 92, Springer-Verlag, New York, 1984.

\bibitem{dudley}
R. M. Dudley,
\emph{On sequential convergence},
Trans. Amer. Math. Soc. \textbf{112} (1964), 483--507.
https://doi.org/10.1090/S0002-9947-1964-0175081-6


\bibitem{Hajji2013}
A. Hajji,
\emph{Modular spaces topology},
Applied Mathematics \textbf{4} (2013), 1296--1300.
https://doi.org/10.4236/am.2013.49175

\bibitem{Haryadi2025}
H. Haryadi, S. Solikhin,
\emph{Some properties of modular topology in the Orlicz sequence space},
J. Fundamental Mathematics and Applications \textbf{8} (2025), 128--137.
https://doi.org/10.14710/jfma.v0i0.26542


\bibitem{H}
P. Harjulehto,
\emph{Variable exponent Sobolev spaces with zero boundary values},
Math. Bohem. \textbf{132}(2) (2007), 125--136.
https://doi.org/10.21136/MB.2007.134186

%\bibitem{HHKV}
P. Harjulehto, P. H\"ast\"o, M. Koskenoja, S. Varonen,
%\emph{The Dirichlet energy integral and variable exponent Sobolev spaces with zero boundary values},
%Potential Anal. \textbf{25}(3) (2006), 205--222.
%https://doi.org/10.1007/s11118-006-9023-3

\bibitem{Has}
P. H\"ast\"o,
\emph{Counter-examples of regularity in variable exponent Sobolev spaces},
in: \emph{The $p$-Harmonic Equation and Recent Advances in Analysis},
Contemp. Math., Vol. 370, American Mathematical Society, Providence, RI,
2005, 133--143.
https://doi.org/10.1090/conm/370/06832

%\bibitem{kaminska}
%A. Kami\'nska,
%\emph{On uniform convexity of Orlicz spaces},
%Indag. Math. \textbf{44} (1982), 27--36.

%\bibitem{OAP}
%M. A. Khamsi, P. Kumam, O. M\'endez,
%\emph{From modular spaces to boundary value problems: A survey of recent advances},
%Carpathian J. Math. \textbf{41}(2) (2025), 425--440.
%https://doi.org/10.37193/CJM.2025.02.10

\bibitem{KK}
M. A. Khamsi, W. M. Kozlowski,
\emph{Fixed Point Theory in Modular Function Spaces},
Birkh\"auser, New York, 2015.

\bibitem{AOJA}
M. A. Khamsi, J. Lang, O. M\'endez, A. Nekvinda,
\emph{The non-homogeneous Dirichlet problem for the $p(x)$-Laplacian
with unbounded $p(x)$ on a smooth domain},
J. Differential Equations \textbf{434} (2025), 113316.
https://doi.org/10.1016/j.jde.2025.113316

\bibitem{KR}
O. Kov\'a\v{c}ik, J. R\'akosn\'ik,
\emph{On spaces $L^{p(x)}$ and $W^{k,p(x)}$},
Czechoslovak Math. J. \textbf{41}(4) (1991), 592--618.

\bibitem{Kozlowski2020}
W. M. Kozlowski,
\emph{On modulated topological vector spaces and applications},
Bull. Aust. Math. Soc. \textbf{101} (2020), 325--332.
https://doi.org/10.1017/S0004972719000716

\bibitem{kozlowski_book}
W. M. Kozlowski,
\emph{Modular Function Spaces},
Monographs and Textbooks in Pure and Applied Mathematics, Vol. 122,
Marcel Dekker, New York/Basel, 1988.

\bibitem{ML}
O. M\'endez, J. Lang,
\emph{Analysis on Function Spaces of Musielak--Orlicz Type},
Taylor \& Francis, 2018.

\bibitem{Luxemburg}
W. A. J. Luxemburg,
\emph{Banach Function Spaces},
Ph.D. Thesis, Technische Hogeschool te Delft, Delft, 1955.

\bibitem{M:1983}
J. Musielak,
\emph{Orlicz Spaces and Modular Spaces},
Lecture Notes in Mathematics, Vol. 1034, Springer-Verlag, Berlin, 1983.

\bibitem{nakano}
H. Nakano,
\emph{Modulared Semi-Ordered Linear Spaces},
Maruzen Co., Tokyo, 1950.

\bibitem{nakano3}
H. Nakano,
\emph{Topology of Linear Topological Spaces},
Maruzen Co. Ltd., Tokyo, 1951.

\bibitem{orlicz1931}
W. Orlicz,
\emph{\"Uber konjugierte Exponentenfolgen},
Studia Math. \textbf{3} (1931), 200--211.

%\bibitem{RaRu}
%K. R. Rajagopal, M. R\r{u}\v{z}i\v{c}ka,
%\emph{Modeling of electrorheological materials},
%Continuum Mech. Thermodyn. \textbf{13}(1) (2001), 59--78.
%https://doi.org/10.1007/s001610100034

%\bibitem{rao}
%M. M. Rao, Z. D. Ren,
%\emph{Theory of Orlicz Spaces},
%Monographs and Textbooks in Pure and Applied Mathematics, Vol. 146,
%Marcel Dekker, New York, 1991.

%\bibitem{riesz}
%F. Riesz,
%\emph{Untersuchungen \"uber Systeme integrierbarer Funktionen},
%Math. Ann. \textbf{69} (1910), 449--497.

%\bibitem{sharapudinov}
%I. I. Sharapudinov,
%\emph{Topology of the space ${\cal L}^{p(t)}([0,1])$},
%Mat. Zametki \textbf{26}(4) (1979), 613--632;
%English transl., Math. Notes \textbf{26}(4) (1979), 796--806.
%https://doi.org/10.1007/BF01159546

%\bibitem{waterman}
%D. Waterman, T. Ito, F. Barber, J. Ratti,
%\emph{Reflexivity and summability: The Nakano $\ell(p_i)$ spaces},
%Studia Math. \textbf{33}(2) (1969), 141--146.

\end{thebibliography}
\end{document}